\numberwithin{equation}{section}
\theoremstyle{plain}
\newtheorem{thm}{Theorem}[section]
\newtheorem{rem}{Remark}[section]
\newtheorem{ex}{Example}[section]
\newtheorem{prop}{Proposition}[section]
\newtheorem{lem}{Lemma}[section]
\newtheorem{definition}{Definition}[section]
\newtheorem{hyp}{Assumption}[section]
\def\build#1_#2^#3{\mathrel{\mathop{\kern 0pt#1}\limits_{#2}^{#3}}}
\def\videbox{\mathbin{\vbox{\hrule\hbox{\vrule height1.4ex \kern.6em\vrule height1.4ex}\hrule}}}
\newcommand{\RR}{{\mathbb R}}
\renewcommand{\P}{{\mathbb P}}
\newcommand{\Bd}{\mathbb{B}(0,1)\backslash \{0\}}
\newcommand{\EE}{\ensuremath{\mathbb E}}
\newcommand{\XX}{\ensuremath{\mathcal X}}
\newcommand{\ee}{\ensuremath{\varepsilon}}
\newcommand*{\supl}{\operatornamewithlimits{sup}\limits}
\newcommand*{\argsupl}{\operatornamewithlimits{argsup}\limits}
\def\argmin{\mathop{\rm arg \; min}\limits}%
\newcommand{\thefont}[2]{\fontsize{#1}{#2}\fontshape{n}\selectfont}
\newcommand{\1}{\rlap{\thefont{10pt}{12pt}1}\kern.16em\rlap{\thefont{11pt}{13.2pt}1}\kern.4em}
\begin{document}

\title[]{Monge-Kantorovich superquantiles and expected shortfalls with applications to multivariate risk
measurements\vspace{1ex}}

\author{Bernard Bercu, J\'{e}r\'{e}mie Bigot and Gauthier Thurin}
\dedicatory{\normalsize Universit\'e de Bordeaux \\
Institut de Math\'ematiques de Bordeaux et CNRS  UMR 5251, \\ 351 Cours de la lib\'eration, 33400 Talence cedex, France}
\thanks{The authors gratefully acknowledge financial support from the Agence Nationale de la Recherche  (MaSDOL grant ANR-19-CE23-0017).}

\maketitle

\thispagestyle{empty}

\vspace{-2ex}

\begin{abstract}
We propose center-outward superquantile and expected shortfall functions, with applications to multivariate risk measurements, extending the standard notion of value at risk and conditional value at risk from the real line to $\RR^d$. 
Our new concepts are built upon the recent definition of Monge-Kantorovich quantiles based on the theory of optimal transport, and they provide a natural way to characterize multivariate tail probabilities and central areas of point clouds. 
They preserve the univariate interpretation of a typical observation that lies beyond or ahead a quantile, but in a meaningful multivariate way. 
We show that they characterize random vectors and their convergence in distribution, which underlines their importance. Our new concepts are illustrated on both simulated and real datasets. 
\end{abstract}

\vspace{0.5cm}

\noindent \emph{Keywords:} Monge-Kantorovich quantiles, center-outward quantiles, tails of a multivariate distribution, conditional value at risk, expected shortfall.\\

\noindent\emph{AMS classifications:} 62H05, 62P99, 60F05.

\section{Introduction}
\subsection{Superquantile, expected shortfall}
Modeling the dependency between the components of a random vector is at the core of multivariate statistics. To that end, one way to proceed is to characterize the multivariate probability tails. For distributions supported on the real line, this is often tackled with the use of superquantiles or expected shortfalls, that complement
 the information given by the quantiles. 
Let $X $ be an integrable absolutely continuous random variable with cumulative distribution function $F$. For all $\alpha \in ]0,1[$, the quantile $Q(\alpha)$ of level $\alpha$ is given by
$$
Q(\alpha) = \inf\{ x : F(x) \geq \alpha \},
$$
whereas the superquantile $S(\alpha)$ and expected shortfall $E(\alpha)$  are defined by 
\begin{equation}\label{superQdim1}
S(\alpha) = \EE [X \big\vert X \ge Q(\alpha)] = \frac{\EE [X \mathds{1}_{ X \ge Q(\alpha)}] }{\mathbb{P} (X \ge Q(\alpha))} = \frac{1 }{1-\alpha} \EE [X \mathds{1}_{ X \ge Q(\alpha)}],
\end{equation}
and
\begin{equation}\label{ESdim1}
E(\alpha) = \EE [X \big\vert X \le Q(\alpha)] = \frac{\EE [X \mathds{1}_{ X \le Q(\alpha)}] }{\mathbb{P} (X \le Q(\alpha))} = \frac{1}{\alpha} \EE [X \mathds{1}_{ X \le Q(\alpha)}].
\end{equation}

\noindent As illustrated in Figure \ref{fig:plotintro}, $S(\alpha)$ focuses on the upper-tail while $E(\alpha)$ targets {  the lower-tail.
We emphasize that} using the terms of superquantile and expected shortfall is a subjective consideration 
taken from \cite{acerbi2001ES,Rockafellar2013}.
Most of the time, one does not consider the upper and the lower tails together, so that a single name is required, up to considering  the distribution of $-X$. 
In this vein, depending on the application, the expected shortfall may refer to the same as the Conditional-Value-at-Risk, Conditional-Tail-Expectation, see $e.g.$ \cite{acerbi2002coherence}, or even the superquantile, that aims to be a neutral alternative name in statistics \cite{Rockafellar2013}. 

\begin{figure}[htbp]
\centering
\includegraphics[width=3.2in,height=1.6in]{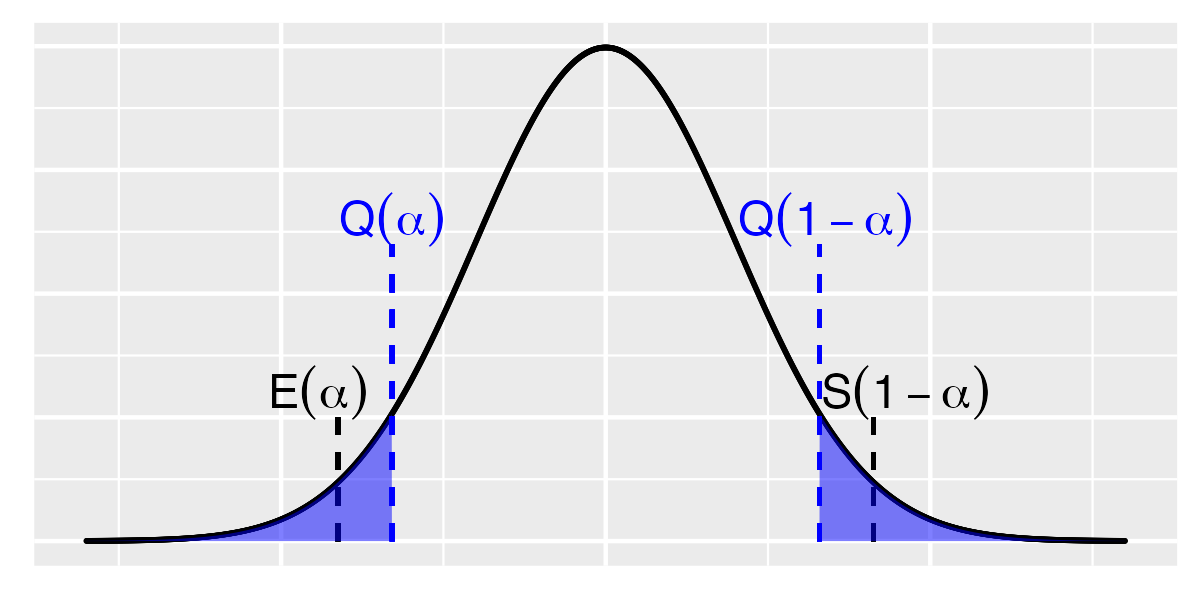}
\vspace{-2ex}
\caption{ Illustration of the notions of superquantile  $S(\alpha)$ and expected shortfall $E(\alpha)$ for an univariate Gaussian distribution.}
\label{fig:plotintro}
\end{figure}

The main contribution of the present paper is to extend \eqref{superQdim1} 
and \eqref{ESdim1} towards a notion of multivariate superquantile and expected shortfall. 
As part of the difficulty, both the mathematical meanings of ``ahead'', ``beyond'' and ``typical'' do not adapt canonically in $\RR^d$. 
We argue that sufficient notions are provided by the Monge-Kantorovich (MK) quantiles, ranks and signs, introduced in \cite{chernozhukov2015mongekantorovich,Hallin-AOS_2021}. 
In particular, the traditional left-to-right ordering is replaced in our approach by a center-outward one 
that is more intuitive for a point cloud \cite{chernozhukov2015mongekantorovich}.
Hence, the two subsets of observations that we are interested in are located at the outward or near the mean value, which requires to adapt the concepts of \eqref{superQdim1} and \eqref{ESdim1} in $\RR^d$. 
It is well-known from a simple change of variables in $\RR$ that $S$ and $E$ average observations beyond and ahead the quantile of level $\alpha$, in the sense that 
\begin{equation}\label{EandS_intro_dim1}
S(\alpha) = \frac{1}{1-\alpha} \int_\alpha^1 Q(t) dt
\hspace{1cm}
\text{and}
\hspace{1cm}
E(\alpha) = \frac{1}{\alpha} \int_0^\alpha Q(t) dt.
\end{equation}

In Section \ref{superquant}, our definitions generalize the formulation \eqref{EandS_intro_dim1}. 
If $Q$ stands for the multivariate MK quantile function instead of the classical univariate one, center-outward superquantile and expected shortfall functions are defined, for any 
$u$ in the unit ball $\mathbb{B}(0,1)\backslash \{ 0\}$, by
\begin{equation*}\label{EandS_intro}
S(u) = \frac{1}{1-\Vert u\Vert} \int_{\Vert u\Vert}^1 Q\Big(t \frac{u}{\Vert u \Vert}\Big) dt
\hspace{1cm}
\text{and}
\hspace{1cm}
E(u) = \frac{1}{\Vert u\Vert} \int_0^{\Vert u\Vert} Q\Big(t\frac{u}{\Vert u \Vert}\Big) dt.
\end{equation*}

MK quantiles have already led to many applications, among which lie statistical tests \cite{ghosal2021multivariate,HallinTests2022,Ziang2022,shi2021,shi2022distribution}, regression \cite{Carlier:2022wq,delBarrio2022}, risk measurement \cite{beirlant2019centeroutward}, or Lorenz maps \cite{Fan2022,Hallin_mordant_2022}. We also refer to the recent review \cite{HallinReport2021} 
on the concept of MK quantiles.
Importantly, the univariate quantile and related functions are deeply rooted in risk analysis.
On the one hand, risk measures which are both coherent and regular can be characterized by integrated quantile functions \cite{Guschin_2017}. 
On the other hand, given a level $\alpha$, fundamental risk measures are given by $Q(\alpha)$ and $S(\alpha)$, called  Value-at-Risk (VaR) and  Conditional-Value-at-Risk (CVaR),  respectively. 
As a matter of fact, a natural main contribution of the present paper is to provide meaningful multivariate extensions of VaR and CVaR.

\subsection{Background on multivariate risk measurement}
MK quantiles and the related concepts have already been applied to multivariate risk measurement in \cite{beirlant2019centeroutward}, and, in some sense, in the previous works \cite{Ekeland_2010,Henry2021}. The theory in \cite{Ekeland_2010} states ideal theoretical properties for coherent regular risk measures, while the maximal correlation risk measure of \cite{beirlant2019centeroutward} furnishes a real-valued risk measure with these properties. 
This constitutes, to the best of our knowledge, the short literature on risk measurement based on the MK quantile function. 
In this work, we argue that an adequate procedure of multivariate risk measurement shall account for all the information on the tails, both in terms of direction and spreadness. To answer this issue, vector-valued risk measures are natural candidates.{  
There} also exist several extensions of VaR or CVaR to the multivariate setting, including \cite{Armaut2023,huameng2022,COUSIN201332,DiBernardino2014,Goegebeur2023,Janet2004,Prekopa2012,Torres2015,Kerem2021}, but none of them is based on the theory of optimal transportation and its associated potential benefits. 
In particular, our concepts do not require any assumption on the tail behavior of the data, nor any statistical model, because MK quantiles adapt naturally to the shape of a point cloud. 
{  
On the real line,} the VaR and the CVaR have a clear interpretation: for a level $\alpha \in [0,1]$, the VaR is the worst observation encountered with probability $1-\alpha$ whereas CVaR is the average value beyond this worst observation. Such a meaningful definition is surely part of the reason for their wide use in practice. Under the name of Conditional-Tail-Expectation, the idea proposed 
in \cite{DiBernardino2014,di2013plug} preserves this interpretation, but relies on level 
sets defined from the theory of copulas. 
Specifically, the obtained quantile levels do not adapt automatically to the shape of the data. Still, this notion averages over a certain quantile level, and it returns a tail observation of the same dimension as the data. Our work is inspired by this approach, as we aim to give the same information about multivariate tails, but we use the MK quantile function, which yields, to our opinion, concepts with even better interpretability. 

\subsection{Main contributions}

Our main contributions are the definitions of a center-outward superquantile function and related risk measures, both real-valued and vector-valued. Doing so, we provide an extension to the multivariate case of the fundamental 
Value-at-Risk and Conditional-Value-at-Risk. 
Furthermore, we provide a center-outward expected shortfall function, that describes the central areas of a given point cloud. 
Our center-outward expected shortfalls and superquantiles are uniquely defined and characterize convergence in distribution, and they are closely related to the potential whose gradient gives the MK quantile function. 
A result of independent interest is also provided, giving a new family of Monge maps between known probability distributions. This may motivate changing the reference distribution under generalized gamma models. 

\subsection{Outline of the paper}
Section \ref{superquant} details  definitions and properties of our new concepts of center-outward superquantiles and expected shortfalls. It includes our main results and the crucial relation between these functions and Kantorovich potentials. 
{  An alternative class of reference measures, which differ from the standard spherical uniform distribution on the unit ball, is provided in Section \ref{class_ref}.}
The multivariate definitions of VaR and CVaR are given in Section \ref{sec:VVrisk}. Finally, we present in Section \ref{sec:numexp} a regularized version of our superquantile and expected shortfall functions, using entropically regularized optimal transport that has fundamental computional benefits to estimate the center-outward quantile function. Numerical experiments are also provided to shed some light on the benefits of our new concepts of MK superquantile, expected shortfall and multivariate VaR and CVaR for multivariate data analysis. A conclusion and some perspectives are given in Section \ref{sec:conclu}.
 
\section{Center-outward superquantiles and expected shortfalls}\label{superquant}

\subsection{Main definitions}

On the real line, the notion of superquantile and expected shortfall relies heavily on the one of quantile. It is then natural to make use of the Monge-Kantorovich (MK) quantile function and its appealing properties in order to define associated superquantile and expected shortfall functions. 
By simplicity, we shall restrict ourselves to the set of integrable probability measures over $\RR^d$, that is 
\begin{equation*}
\mathcal{P}_1(\RR^d)= \big\{ \nu : \EE_{X\sim \nu}[\Vert X \Vert] < +\infty \big\}.
\end{equation*}
Recall that
a probability measure $\nu \in \mathcal{P}_1(\RR^d)$  is the push-forward of $\mu \in \mathcal{P}_1(\RR^d)$ by $T : \RR^d \rightarrow \RR^d$ if $T(U)$ has distribution $\nu$ as soon as $U$ is distributed according to $\mu$.
This is denoted by
$T_\# \mu = \nu$. 
The following has been introduced in \cite{Hallin-AOS_2021}. 
\begin{definition}\label{defQ}
The MK quantile function of a multivariate distribution $\nu$, with respect to a reference distribution $\mu$, is a push-forward map $Q_\# \mu = \nu$ such that
there exists a convex potential $ \psi : \RR^d \rightarrow \RR$ satisfying $\nabla \psi = Q$ $\mu$-almost everywhere.
\end{definition}

It follows from McCann's theorem, \cite{McCannThm} that, as soon as $\mu$ is absolutely continuous, such a Monge map $Q$ exists and is unique.
Moreover, if $\mu$ and $\nu$ have finite moments of order two, by the result known in the literature as Brenier's theorem \cite{Brenier1991PolarFA,Cuesta1989NotesOT}, $Q$ is characterized as the solution of the following Monge problem of optimal transport,
\begin{equation}\label{MongeOT}
Q = \argmin_{T:T_\#\mu = \nu} \int_\XX \Vert u - T(u) \Vert^2 d\mu(u).
\end{equation}

Intuitively, the reference distribution $\mu$ must be chosen so that a relevant notion of quantiles can be derived from it, whereas being the gradient of a convex function $\psi$ is a generalization of monotonicity. For instance, one can choose the \textit{spherical uniform} distribution, denoted by $\mu=U_d$. It is given by the product $R \Phi$ between two independent random variables $R$ and $\Phi$, being drawn respectively from a uniform distribution on $[0,1]$ and on the unit sphere. 
Samples from $U_d$ are distributed from the origin to the outward within the unit ball, so that the balls of radius $\alpha \in [0,1]$ have probability $\alpha$ while being nested, as $\alpha$ grows. With this in mind, the hyperspheres of radius $\alpha$ are relevant quantile contours with respect to $\mu$. 
Being the gradient of a convex function, $Q$ adequately transports this \textit{center-outward} ordering towards the distribution $\nu$. 
Thus, when $\mu = U_d$, we shall refer to $Q$ as the \textit{center-outward} quantile function of $\nu$. 
This property is illustrated in Figure \ref{fig:UdBananeIntro}, where radius, in red, and circles, in blue, are transported from the unit ball to a banana-shaped distribution thanks to the mapping $Q$ obtained with the computational approach described in Section \ref{sec:numexp}. 

\begin{figure}[htbp]
\includegraphics[width=4.4in,height=2.2in]{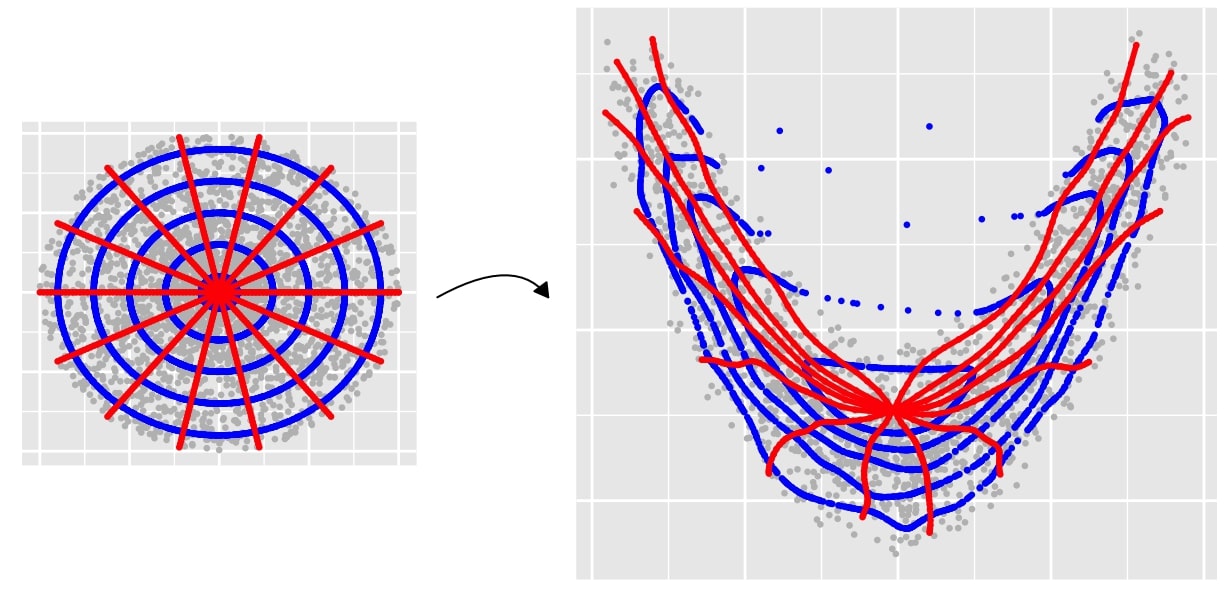}
\caption{ (Left) center-outward quantiles of the spherical uniform distribution $\mu = U_d$, and (right) center-outward quantiles of a discrete distribution $\nu$ obtained by $Q_\# \mu = \nu$.}
\label{fig:UdBananeIntro}
\end{figure} 
\ \vspace{-1ex} \par
This relevant ordering clearly catches the geometry of the support of the target distribution $\nu$, and it comes with quantile {  regions} indexed by a probability level $\alpha \in [0,1]$, by use of the change of variables formula for push-forward maps. 
More details are given in \cite{chernozhukov2015mongekantorovich,Hallin-AOS_2021}. 
Hereafter, we use the spherical uniform as the reference distribution, but 
other distributions could be chosen, depending on the applications, \cite{de2018stability,Fan2022,ghosal2021multivariate}. 
Note that if $\nu$ is an empirical measure based on random observations, it has a finite second order moment, and the estimation of the quantile map amounts to solve the OT problem \eqref{MongeOT}. 
Following \cite{Hallin-AOS_2021}, we assume, without loss of generality, that $\psi$ in Definition \ref{defQ} satisfies 
$
\psi(0)=0
$
and, for $u\in \RR^d$ such that $\Vert u \Vert = 1$,
\begin{equation}\label{hallin_psi01}
\psi(u) = \liminf\limits_{\substack{v \rightarrow u \\ \Vert v \Vert < 1}} \psi(v).
\end{equation}
Moreover, we impose, for all $u\in \RR^d$ such that $\Vert u \Vert >1$,
\begin{equation}\label{hallin_psi0}
\psi(u) = +\infty.
\end{equation}
This being said, the convex potential $\psi$ is uniquely defined over its domain $\mbox{Dom}(\psi)=\{ u \vert \psi(u) < +\infty \}$, that verifies $\mathbb{B}(0,1) \subset \mbox{Dom}(\psi) \subset \overline{\mathbb{B}}(0,1)$.
Although $\psi$ is continuous over $\mathbb{B}(0,1)$, \cite{rockafellar-1970a}[Theorem 10.3],
the gradient $\nabla\psi$ is only defined almost everywhere. 
At every $u$ where $\psi$ is not differentiable, one can still define the subdifferential
\begin{equation*}
\partial \psi(u) = \{ z \in \RR^d : \forall x\in \RR^d, \psi(x)-\psi(u) \geq \langle z, x-u \rangle \}.
\end{equation*}
Following the suitable suggestion of an anonymous referee, for all $u\in \mathbb{B}(0,1)$, we can define $Q(u)$ as the average of $\partial \psi(u)$ so that $Q$ is defined everywhere.
In fact, from \cite{Figalli2018}, as soon as $\nu$ is a continuous probability measure with non vanishing density on $\RR^d$, $\psi$ can be shown to be differentiable everywhere on $\Bd$.
The same result is showed under milder assumptions in \cite{del2019note}, that is presented hereafter.

\begin{hyp}\label{hypA}
Let $\nu$ be an absolutely continuous measure with probability density {  function} $p$ defined on its support $\XX$.
For every $R>0$, there exist {  two} constants $0<\lambda_R<\Lambda_R$ such that, for all $x \in \XX \cap \mathbb{B}(0,R)$,
\begin{equation*}
\lambda_R \le p(x) \le \Lambda_R.
\end{equation*}
\end{hyp}

\begin{hyp}\label{hypB}
The support $\XX \subset \RR^d$ of $\nu$ is convex. 
\end{hyp}

Under these assumptions, the next theorem is {  given} in \cite{del2019note}.
%
\begin{thm}[Regularity of the center-outward quantile function, \cite{del2019note}]   
Under Assumptions \ref{hypA} and \ref{hypB}, there exists a compact convex set $K$ with Lebesgue measure $0$ such that the center-outward quantile function $Q$ is a homeomorphism from $\mathbb{B}(0,1)  \backslash \{ 0 \}$ to $\XX \backslash K$, with inverse $Q^{-1}$ the center-outward distribution function. 
Moreover, $Q^{-1} = \nabla \psi^*$  where $\psi^*$  is the Fenchel-Legendre transform of $\psi$ such that $Q = \nabla \psi$, 
$$
\psi^*(x) = \supl_{u \in \mathbb{B}(0,1)} \{ \langle x,u\rangle - \psi(u) \}.
$$ 
\label{regul_Q}
\end{thm}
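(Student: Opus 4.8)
The plan is to derive this regularity statement essentially as a restatement and light repackaging of the main results of del Barrio--González-Sanz--Hallin \cite{del2019note}, tracing the chain of implications from McCann/Brenier through the Caffarelli-type regularity available under Assumptions \ref{hypA} and \ref{hypB}. First I would recall that, since $\mu = U_d$ is absolutely continuous, McCann's theorem \cite{McCannThm} guarantees the existence and uniqueness of the convex potential $\psi$ with $\nabla\psi = Q$ $\mu$-a.e., normalized as in \eqref{hallin_psi01}--\eqref{hallin_psi0} so that $\mathrm{Dom}(\psi)$ lies between $\mathbb{B}(0,1)$ and its closure. The starting observation is that, by convex duality, $\psi^*$ is a convex function on $\RR^d$ whose gradient $\nabla\psi^*$ pushes $\nu$ forward to $\mu$, and that $\nabla\psi$ and $\nabla\psi^*$ are inverse to each other on the sets where both are single-valued. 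The substantive content is therefore to upgrade this ``a.e./multivalued'' inverse relationship to a genuine homeomorphism between punctured domains.

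The core of the argument is the regularity input. Under Assumption \ref{hypA} (local upper and lower bounds on the density $p$ of $\nu$ on bounded sets) and Assumption \ref{hypB} (convexity of the support $\XX$), one invokes the Figalli--type interior regularity result for optimal transport potentials between densities bounded away from $0$ and $\infty$ on convex sets: $\psi^*$ is strictly convex and $C^1$ on the interior of $\XX$, and likewise $\psi$ is differentiable on $\mathbb{B}(0,1)\setminus\{0\}$ (the puncture at the origin being where $Q^{-1}$ maps the ``central'' set $K$). Here I would cite \cite{Figalli2018,del2019note} for the precise statement that $\psi$ is differentiable everywhere on $\Bd$ and that $\nabla\psi$ is continuous there. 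The exceptional set $K$ arises as $K = \overline{Q(\{0\})} = \partial\psi(0)$ together with any lower-dimensional pieces that collapse: one shows $K$ is compact (as the subdifferential of a finite convex function at an interior point), convex, and Lebesgue-null (since $Q$ is injective off a null set and $\mu(\{0\})=0$ forces $\nu(K)=0$, and a convex set of positive measure cannot be the image of a single point under an a.e.-defined Monge map).

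With differentiability of $\psi$ on $\mathbb{B}(0,1)\setminus\{0\}$ and of $\psi^*$ on $\mathrm{int}(\XX)\setminus K$ in hand, I would verify the homeomorphism claim in four steps: (i) $Q = \nabla\psi$ is continuous on $\mathbb{B}(0,1)\setminus\{0\}$ by convexity plus everywhere-differentiability (gradients of convex functions are continuous on the interior of the differentiability set); (ii) $Q$ maps $\mathbb{B}(0,1)\setminus\{0\}$ into $\XX\setminus K$ and is surjective onto it, using $Q_\#\mu=\nu$ and the fact that $\mathrm{supp}(\nu)=\XX$ together with the regularity of $\psi^*$; (iii) $Q$ is injective, because $\nabla\psi(u_1)=\nabla\psi(u_2)$ with $\psi$ differentiable and strictly convex along the segment (a consequence of strict convexity of $\psi^*$, equivalently differentiability of $\psi$) forces $u_1=u_2$; (iv) the inverse is $\nabla\psi^*$, which is continuous on $\XX\setminus K$ by the same convex-analysis argument applied to $\psi^*$, so $Q^{-1}=\nabla\psi^*$ and both $Q$ and $Q^{-1}$ are continuous, giving the homeomorphism. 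The identity $Q^{-1}=\nabla\psi^*$ with the stated Fenchel--Legendre formula then follows from the general duality $\nabla\psi^* = (\nabla\psi)^{-1}$ on the regular set, and the sup in the definition of $\psi^*$ can be restricted to $\mathbb{B}(0,1)$ because $\psi=+\infty$ outside $\overline{\mathbb{B}}(0,1)$ by \eqref{hallin_psi0}.

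I expect the main obstacle to be establishing the regularity of $\psi^*$ (equivalently the everywhere-differentiability of $\psi$ on $\Bd$) rather than the formal packaging: this is precisely the delicate Caffarelli/Figalli regularity theory, and under the weakened hypotheses of \cite{del2019note} it requires care because the spherical uniform reference density $U_d$ is \emph{not} bounded below near the origin, so the standard Caffarelli interior regularity must be applied on the side of $\nu$ and transferred, and the behavior at $0$ (where the set $K$ is blown down) must be excised. Since Theorem \ref{regul_Q} is explicitly attributed to \cite{del2019note}, the honest proof is: state the normalization, quote the regularity theorem from that reference for the differentiability of $\psi$ and $\psi^*$ and the structure of $K$, and then carry out the elementary convex-analysis verification (i)--(iv) above to assemble the homeomorphism and the inverse formula. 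I would not re-derive the PDE regularity from scratch.
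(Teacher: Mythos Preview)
The paper does not prove this theorem at all: it is stated as a direct citation of \cite{del2019note}, with no proof or proof sketch provided in the text. Your proposal is therefore not comparable to anything in the paper, though your outline (McCann/Brenier existence, Caffarelli--Figalli interior regularity transferred to the $\nu$ side because $U_d$ degenerates at the origin, excision of $K=\partial\psi(0)$, and the convex-analytic assembly of the homeomorphism via mutual differentiability of $\psi$ and $\psi^*$) is a faithful summary of how the argument in \cite{del2019note} actually proceeds, and you correctly identify the delicate point as the regularity of $\psi$ near $0$ where the reference density is unbounded.
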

Hereafter, this allows us to properly define quantile contours, ranks and signs.
For the sake of simplicity, we define $\mathcal{P}_{*}(\RR^d)$ as the set of integrable probability measures for which $Q$ is a homeomorphism from $\Bd$ to its image, see Theorem \ref{regul_Q}. 
This includes any $\nu$ satisfying assumptions \ref{hypA} and \ref{hypB}, that is also integrable, to ensure finiteness of the center-outward superquantiles.
We emphasize that, for any $\nu\in \mathcal{P}_{*}(\RR^d)$, $Q$ is defined everywhere on $\Bd$, as opposed to almost everywhere on $\mathbb{B}(0,1)$.
The next definitions, taken from \cite{chernozhukov2015mongekantorovich,Hallin-AOS_2021}, gather the main concepts that we use in the following.

\begin{definition}[Quantile contours, ranks and signs]\label{def:MKquantile}
Let {  $\nu \in \mathcal{P}_*(\RR^d)$} with center-outward quantile function $Q$ and supported on $\XX \subset \RR^d$.
Then, for the distribution $\nu$, \\
(i) the quantile region $\mathds{C}_\alpha$ of order $\alpha \in [0,1]$ is the image by $Q$ of the ball $\mathbb{B}(0,\alpha)$. \\
(ii) the quantile contour $\mathcal{C}_\alpha$ of order $\alpha \in [0,1]$ is the boundary of $\mathds{C}_\alpha$. \\
(iii) the rank function $\mathcal{R}_\nu : \XX \rightarrow [0,1]$ is defined by $\mathcal{R}_\nu(x) = \Vert Q^{-1}(x) \Vert$. \\
(iv) the sign function $\mathcal{D}_\nu : \XX \rightarrow \mathbb{B}(0,1) $ is defined by $\mathcal{D}_\nu(x) = Q^{-1}(x) / \Vert Q^{-1}(x) \Vert$. 
\end{definition}
A few remarks follow from Definition \ref{def:MKquantile}. 
The $\nu$-probability of $\mathds{C}_\alpha$ is $\alpha$, by the change of variables formula for push-forward maps, which is a first requirement for quantile regions. 
In addition, one may note that the rank and sign functions require the invertibility of $Q$. 
From \cite{Hallin-AOS_2021}[Section 2], the continuity and invertibility of $Q$ outside the origin ensures the crucial fact that the quantile contours are closed and nested. 
It is the notion of center-outward ranks that allows to order points in $\XX$ relatively to $\nu$, consistently with Tukey's halfspace depth, as highlighted in \cite{chernozhukov2015mongekantorovich}. It induces the following weak order.
\begin{definition}
For {  $\nu \in \mathcal{P}_*(\RR^d)$ and} $x,y \in \XX$, we denote $x \geq_{\mathcal R} y$ and say that $y$ is deeper than $x$ if
$$
\Vert Q^{-1}(x) \Vert \ge \Vert Q^{-1}(y) \Vert.
$$
\end{definition}

The deeper a point is in $\XX$, the less extremal it is with respect to $\nu$. 
For a fixed $u \in \Bd$, we can then write $x \geq_{\mathcal{R}} Q(u)$ which enables to consider the observations ``beyond'' $Q(u)$ in some sense.
In a context where the focus is on the \textit{central} observations, $u \mapsto \EE [ X \vert X \leq_{\mathcal R} Q(u) ]$ has been introduced in \cite{Hallin_mordant_2022} 
as a center-outward Lorenz function for $X$, see Section \ref{integratedQuantiles}. 
The reverse conditional expectation $\EE [ X \vert X \geq_{\mathcal R} Q(u) ]$ might be thought of as a natural candidate for a superquantile concept.
Nevertheless, it cannot be understood as a ``typical'' observation within \textit{outward} areas. 
Indeed, consider the following example.

\begin{ex}\label{example1}
 Suppose that $\nu=U_d$, so that $Q$ is the identity. For every $u \in \Bd$, $\EE \left[ X \vert X \geq_{\mathcal R} Q(u) \right] = \EE \left[ X \big\vert \Vert X \Vert \geq \Vert u \Vert \right] $ is the expectation of a symmetric distribution over an annulus centered at the origin. Therefore, one has that $\EE [ X \vert X \geq_{\mathcal R} Q(u) ] = 0$ and it cannot be thought of as a typical extreme observation.
\end{ex}

This is caused by a lack of information in $\EE [ X \vert X \geq_{\mathcal R} Q(u) ]$. In fact, the rank function neglects the directional information of $X$, which lies in the sign function. In order to overcome this issue, we have to introduce a few notation. For any $u \in \Bd$, let
\begin{equation}\label{Lu}
L_u = \Big\{ t\frac{u}{\Vert u \Vert} : t \in [0,1] \Big\} 
\end{equation}
be a parametrization of the radius of $\Bd$ whose direction is $u/\Vert u \Vert$. The \textit{sign curve} $C_u$ associated to $u$ is the image of $L_u$ by the center-outward quantile function, namely
\begin{equation*}
C_u = Q(L_u).
\end{equation*}
Several sign curves are represented in red at the right-hand side of Figure \ref{fig:UdBananeIntro}. 
Averaging observations less deep that $Q(u)$ along the sign curve $C_u$ induces a ``typical'' value ``beyond'' $Q(u)$ in a meaningful multivariate way. 

\begin{definition}\label{superquantile}
Let $\nu \in \mathcal{P}_1(\RR^d)$ with center-outward quantile function $Q$.
The center-outward superquantile function of $\nu$ is the function $S$ defined, for any $u \in \Bd$, by
$$
S(u)= \frac{1}{1-\Vert u \Vert} \int_{\Vert u \Vert}^1 Q\Big(t\frac{u}{\Vert u \Vert}\Big) dt,
$$
where the above integral is to be understood component-wise.
\end{definition}

\begin{rem}[Consistency with the univariate case] On the real line, there is only one sign 
curve $C_1 = \{ Q( t ) ; t \in [0,1] \}$. 
Thus, definition \eqref{EandS_intro_dim1} can be seen as averaging observations less deep than $Q(\alpha)$, w.r.t. $\nu$, along the sign curve $C_1$. Nonetheless, note that center-outward quantiles slightly differ from classical quantiles in dimension $d=1$, because $U([-1,1]) \ne U([0,1])$, even if they carry the same information, \cite{Hallin-AOS_2021}[Appendix B].
\end{rem}

\noindent By the same token, we can define the center-outward expected shortfall function. 

\begin{definition}\label{expectedshortfall}
Let $\nu \in \mathcal{P}_1(\RR^d)$ with center-outward quantile function $Q$.
The center-outward expected shortfall function of $\nu$ is the function $E$ defined, for any $u \in \Bd$, by
$$
E(u)= \frac{1}{\Vert u \Vert} \int_0^{\Vert u \Vert} Q\Big(t\frac{u}{\Vert u \Vert}\Big) dt,
$$
where the above integral is to be understood component-wise.
\end{definition}
 
\begin{rem}\label{remS_E_continu}
For $\nu \in \mathcal{P}_*(\RR^d)$, $Q$ is neither defined at the origin nor at the boundary of the unit ball, unless the support of $\nu$ is compact. Hence the integrals in $S(u)$ and $E(u)$ are improper and they shall be understood respectively as
\begin{equation*}
\lim\limits_{r\rightarrow 1^-} \int_{\Vert u \Vert}^r Q\Big(t\frac{u}{\Vert u \Vert}\Big) dt
\hspace{1cm} \mbox{and} \hspace{1cm} 
\lim\limits_{r\rightarrow 0^+} \int_r^{\Vert u \Vert} Q\Big(t\frac{u}{\Vert u \Vert}\Big) dt.
\end{equation*}
However, note that they are convergent as soon as $\nu \in \mathcal{P}_1(\mathbb{R}^d)$. In fact, the necessary assumption is rather the following integrability along sign curves, 
\begin{equation}\label{RadialInteg_finite}
\int_0^1 \left\Vert Q(t\frac{u}{\Vert u \Vert}) \right\Vert dt < \infty.
\end{equation}
By the change of variables formula for push-forwards maps and by definition of $U_d$,
$\EE_\nu[\Vert X \Vert] = \EE_{U_d}[\Vert Q(U) \Vert] 
= \EE_{(R,\Phi)}[\Vert Q(R\Phi) \Vert].$
In other words, denoting by $\P_\mathbb{S}$ the uniform probability measure on the sphere $\mathbb{S}^{d-1} =  \{\varphi \in \RR^d : \Vert \varphi \Vert = 1 \}$,
\begin{equation*}
\EE_\nu[\Vert X \Vert] = \int_{\mathbb{S}^{d-1}} \int_0^1 \Vert Q(t\varphi) \Vert dt \, d\P_\mathbb{S}(\varphi).
\end{equation*}
Thus, one can see by contradiction that, as soon as $\EE_\nu[\Vert X \Vert]<\infty$, \eqref{RadialInteg_finite} holds for almost all $\varphi \in \mathbb{S}^{d-1}$, thus $S$ and $E$ must be finite almost everywhere.
\end{rem}

The following naturally extends Definition \ref{def:MKquantile}.

\begin{definition}[Superquantile and expected shortfall regions and contours]
Let $\nu \in \mathcal{P}_1(\RR^d)$  with center-outward superquantile function $S$ and expected shortfall $E$. Then, \\
(i) the superquantile (resp. expected shortfall) region $\mathds{C}^s_\alpha$ (resp. $\mathds{C}^e_\alpha$) of order $\alpha \in [0,1]$ is the image by $S$ (resp. $E$) of the ball $\mathbb{B}(0,\alpha)$. \\
(ii) the superquantile (resp. expected shortfall) contour $\mathcal{C}^s_\alpha$ (resp. $\mathcal{C}^e_\alpha$) of order $\alpha \in [0,1]$ is the boundary of $\mathds{C}^s_\alpha$ (resp. $\mathds{C}^e_\alpha$). \\
(iii) averaged sign curves by $E$ or $S$ are respectively defined by $E(L_u)$ and $S(L_u)$ for any $u\in\Bd$.
\label{superq_contours}
\end{definition}

These concepts describe point clouds through periphery or central areas, and are
illustrated in the numerical experiments carried out in Section \ref{sec:numexp}.

\subsection{Invariance properties}\label{subsec:EqInvProps}

The two following lemmas are immediate consequences of \cite[Lemmas A.7,A.8]{ghosal2021multivariate}.

%

\begin{lem}\label{invar_prop_SQ}
Assume that $X \in \RR^d$ is an integrable random vector. Suppose that $a >0$, $b\in \RR^d$ and $Y = aX+b$.
Denote by $S_X,S_Y$ and $E_X,E_Y$ their center-outward superquantile and expected shortfall functions. Then, for $u\in \Bd$, 
$$
S_Y(u) = aS_X(u) + b \quad \text{and} \quad E_Y(u) = aE_X(u) + b.
$$
\end{lem}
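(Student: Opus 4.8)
The plan is to reduce the statement to the corresponding equivariance property of the center-outward quantile function $Q$ and then pass to the integral definitions of $S$ and $E$. The key input is that if $Q_X$ is the center-outward quantile function of (the law of) $X$, then the center-outward quantile function of $Y=aX+b$ is $Q_Y = aQ_X + b$. This is exactly the content of \cite[Lemmas A.7, A.8]{ghosal2021multivariate}, which the paper already invokes: the affine map $x\mapsto ax+b$ with $a>0$ is the gradient of the convex function $x\mapsto \tfrac a2\|x\|^2 + \langle b,x\rangle$, so composing it with $Q_X=\nabla\psi_X$ preserves the ``gradient of a convex potential'' structure (one checks $aQ_X+b = \nabla(a\psi_X + \langle b,\cdot\rangle)$ up to the normalization conventions \eqref{hallin_psi01}--\eqref{hallin_psi0}), while clearly $(aQ_X+b)_\#U_d$ is the law of $aX+b$ since $(Q_X)_\#U_d$ is the law of $X$. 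By McCann's uniqueness this affine map is \emph{the} center-outward quantile function of $Y$, hence $Q_Y(u) = aQ_X(u)+b$ for all $u\in\Bd$.

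Granting $Q_Y = aQ_X + b$, the rest is a direct substitution into Definitions \ref{superquantile} and \ref{expectedshortfall}. First I would write, for fixed $u\in\Bd$,
\begin{equation*}
S_Y(u) = \frac{1}{1-\|u\|}\int_{\|u\|}^1 Q_Y\!\Big(t\frac{u}{\|u\|}\Big)\,dt
= \frac{1}{1-\|u\|}\int_{\|u\|}^1 \Big(a\,Q_X\!\Big(t\frac{u}{\|u\|}\Big) + b\Big)\,dt,
\end{equation*}
and then split the integral by linearity (component-wise, as in the definition): the first term gives $a\,S_X(u)$, and the second gives $\frac{1}{1-\|u\|}\int_{\|u\|}^1 b\,dt = b$ since the interval $[\|u\|,1]$ has length $1-\|u\|$. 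This yields $S_Y(u)=aS_X(u)+b$. The computation for $E$ is identical with the interval $[0,\|u\|]$ of length $\|u\|$, giving $E_Y(u) = aE_X(u) + b$. One should note that the improper integrals (Remark \ref{remS_E_continu}) are handled by carrying the same manipulation inside the limits $\lim_{r\to1^-}$ and $\lim_{r\to0^+}$; integrability of $Q_Y$ along sign curves follows from that of $Q_X$ plus the fact that the constant $b$ is trivially integrable on a bounded interval, so every integral in sight converges.

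The main (and essentially only) obstacle is establishing $Q_Y = aQ_X+b$ rigorously under the paper's normalization conventions, i.e.\ checking that the affine image of $Q_X$ still satisfies \eqref{hallin_psi01}--\eqref{hallin_psi0} and the normalization $\psi(0)=0$ (which forces adding the constant $-a\psi_X(0)=0$, so no correction is needed). Since the paper explicitly states these two lemmas are ``immediate consequences of \cite[Lemmas A.7,A.8]{ghosal2021multivariate}'', I would simply cite that reference for the equivariance of $Q$ and present the short integral computation above as the proof, remarking only briefly on the improper-integral bookkeeping. Everything else is routine linearity of the (component-wise) integral.
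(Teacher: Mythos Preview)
Your proposal is correct and follows essentially the same approach as the paper: cite \cite[Lemma A.7]{ghosal2021multivariate} for $Q_Y=aQ_X+b$, substitute into the integral definition of $S_Y$ (resp.\ $E_Y$), and split by linearity. The paper's own proof is in fact slightly terser than yours, omitting the discussion of the potential's normalization and the improper-integral bookkeeping.
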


\noindent \textit{Proof.}
We only detail $S_Y$, as one can deduce $E_Y$ identically.  From \cite{ghosal2021multivariate}[Lemma A.7], 
\begin{align*}
S_Y(u) &= \frac{1}{1-\Vert u \Vert} \int_{\Vert u \Vert}^1 Q_Y(t \frac{u}{\Vert u \Vert}) dt,\\
&= \frac{1}{1-\Vert u \Vert} \int_{\Vert u \Vert}^1 \Big( aQ_X(t \frac{u}{\Vert u \Vert}) + b\Big) dt = {  aS_X(u) + b.} 
\tag*{$\square$}
\end{align*}

\begin{lem}\label{invar_prop_ES}
Assume that $X \in \RR^d$ is an integrable random vector. Let $S_X,S_Y$ and $E_X,E_Y$ be the center-outward superquantile and expected shortfall functions associated respectively with $X$ and $Y=AX$, for $A$ an orthonormal matrix.
Then, for $u\in \Bd$, 
\begin{align*}
S_Y(u) = AS_X(A^Tu) \quad \text{and} \quad E_Y(u) = AE_X(A^Tu).
\end{align*}
\end{lem}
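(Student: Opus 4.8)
The plan is to reduce everything to the analogous equivariance property of the center-outward quantile function $Q$ under orthogonal transformations, exactly as in the proof of Lemma \ref{invar_prop_SQ}, and then push the orthogonal matrix through the (component-wise, hence linear) radial integral defining $S$ and $E$.

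First I would recall the relevant fact from \cite{ghosal2021multivariate}[Lemma A.8]: if $Y = AX$ with $A$ orthonormal, then the center-outward quantile functions satisfy $Q_Y(u) = AQ_X(A^Tu)$ for $u \in \Bd$. This is the orthogonal-equivariance counterpart of the affine relation $Q_{aX+b} = aQ_X + b$ used in the previous lemma, and it holds because precomposing and postcomposing a gradient-of-convex map with an orthogonal change of coordinates again yields a gradient of a convex function (namely $\psi_Y(u) = \psi_X(A^Tu)$) which pushes $U_d$ forward to the law of $AX$ (using that $A_\# U_d = U_d$ since $U_d$ is rotation-invariant); uniqueness in McCann/Brenier then forces the stated identity.

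Next I would substitute this into the definition of $S_Y$. For $u \in \Bd$, writing $v = A^Tu$ and noting $\Vert v \Vert = \Vert u \Vert$ since $A$ is orthonormal, and $A^T u / \Vert u \Vert = v/\Vert v\Vert$, we get
\begin{align*}
S_Y(u) &= \frac{1}{1-\Vert u \Vert}\int_{\Vert u \Vert}^1 Q_Y\Big(t\frac{u}{\Vert u \Vert}\Big)\,dt
= \frac{1}{1-\Vert u \Vert}\int_{\Vert u \Vert}^1 A\,Q_X\Big(t\,A^T\frac{u}{\Vert u \Vert}\Big)\,dt\\
&= A\left(\frac{1}{1-\Vert v \Vert}\int_{\Vert v \Vert}^1 Q_X\Big(t\frac{v}{\Vert v \Vert}\Big)\,dt\right)
= A\,S_X(A^Tu),
\end{align*}
where pulling $A$ out of the integral is legitimate because the integral is defined component-wise and $A$ acts linearly (equivalently, for each coordinate the integrand is a fixed linear combination of the coordinates of $Q_X$, and integration commutes with finite linear combinations). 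The identity for $E_Y$ follows by the identical computation with $\int_{\Vert u\Vert}^1$ replaced by $\int_0^{\Vert u\Vert}$ and the prefactor replaced by $1/\Vert u\Vert = 1/\Vert v\Vert$.

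There is no real obstacle here; the only point requiring a word of care is the interchange of the linear map $A$ with the (possibly improper, per Remark \ref{remS_E_continu}) radial integral. Since $X$ is integrable, condition \eqref{RadialInteg_finite} holds along almost every direction, so the integrals converge absolutely component-wise, and a bounded linear map commutes with an absolutely convergent (improper) integral; this justifies the factorization of $A$ in the display above. I would therefore present the argument for $S_Y$ in detail and simply remark that $E_Y$ is obtained identically, mirroring the style of the proof of Lemma \ref{invar_prop_SQ}. \hfill$\square$
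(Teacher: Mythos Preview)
Your proposal is correct and follows essentially the same approach as the paper: invoke \cite{ghosal2021multivariate}[Lemma A.8] for $Q_Y(u)=AQ_X(A^Tu)$, use $\Vert A^Tu\Vert=\Vert u\Vert$, and pull $A$ through the radial integral to conclude. The paper's proof is slightly terser (it omits your justification of the quantile equivariance and the remark on improper integrals), but the argument is the same.
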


\noindent \textit{Proof.}
Again, we only detail $S_Y$, as the proof for $E_Y$ is identical.
Combining \cite{ghosal2021multivariate}[Lemma A.8] with the fact that $\Vert A^T u \Vert = \Vert u \Vert$ for an orthogonal matrix, 
\begin{align*}
S_Y(u) &= \frac{1}{1-\Vert u \Vert} \int_{\Vert u \Vert}^1 Q_Y(t \frac{u}{\Vert u \Vert}) dt,\\
&= \frac{1}{1-\Vert u \Vert} \int_{\Vert u \Vert}^1 AQ_X(tA^T \frac{u}{\Vert u \Vert}) dt = {  AS_X(A^Tu).} 
\tag*{$\square$}
\end{align*}

\subsection{Main results}\label{sec:main}
 
Quoting Rockafellar \& Royset in \cite{Rockafellar2013},  
\begin{quote}
    ``the superquantile function [...] is as fundamental to a random variable as the distribution and quantile functions''.
\end{quote}
This assertion is partially motivated by the fact that the distribution, quantile and superquantile functions are uniquely determined one to another, and by the fact that pointwise convergence of these functions metrizes convergence in distribution. Such properties hold for our integrated concepts and are stated hereafter. { 
First of all}, we shall make repeated use of the fact that the center-outward superquantile and expected shortfall functions are two sides of the same coin, that is, for $u\in\Bd$,
\begin{equation}\label{E_eq_S}
\int_0^1 Q\Big(t\frac{u}{\Vert u \Vert}\Big) dt = \Vert u \Vert E(u) + (1-\Vert u \Vert) S(u).
\end{equation}
This is a generalization of the immediate property that, for the univariate setting given in the introduction with \eqref{superQdim1} and \eqref{ESdim1}, we have for all $\alpha \in ]0,1[$,
$$
\EE[X]= \alpha E(\alpha)+ (1-\alpha) S(\alpha).
$$

Our first main result is as follows.

\begin{thm}\label{caracnu1_nu2}
Let $\nu_1,\nu_2 \in \mathcal{P}_1(\RR^d)$ with respective center-outward quantile, superquantile and expected shortfall functions denoted by $Q_1,S_1,E_1$ and $Q_2,S_2,E_2$. Then, the following are equivalent.
\begin{enumerate}
\item[(i)] $\nu_1 = \nu_2$ 
\item[(ii)] $Q_1 = Q_2$ $U_d$-a.e.
\item[(iii)] $S_1 = S_2$ $U_d$-a.e.
\item[(iv)] $E_1 = E_2$ $U_d$-a.e.
\end{enumerate}
\end{thm}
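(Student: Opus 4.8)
\noindent\emph{Proof strategy.} The plan is to establish the cycle of implications (i) $\Leftrightarrow$ (ii), (ii) $\Rightarrow$ (iii), (ii) $\Rightarrow$ (iv), (iii) $\Rightarrow$ (ii) and (iv) $\Rightarrow$ (ii), which together yield the asserted equivalence. The equivalence (i) $\Leftrightarrow$ (ii) is a direct consequence of the uniqueness of the Monge map: since the reference measure $\mu = U_d$ is absolutely continuous, McCann's theorem (quoted after Definition \ref{defQ}) ensures that the center-outward quantile function of a given $\nu$ is $U_d$-a.e.\ unique, so $\nu_1 = \nu_2$ forces $Q_1 = Q_2$ $U_d$-a.e.; conversely, $Q_1 = Q_2$ $U_d$-a.e.\ gives $\int f\, d\nu_1 = \int f(Q_1)\, dU_d = \int f(Q_2)\, dU_d = \int f\, d\nu_2$ for every bounded continuous $f$, hence $\nu_1 = \nu_2$.

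For (ii) $\Rightarrow$ (iii) and (ii) $\Rightarrow$ (iv) I would disintegrate $U_d$ along directions. Writing $U_d$ as the law of $R\Phi$ with $R$ uniform on $[0,1]$ independent of $\Phi$ uniform on $\mathbb{S}^{d-1}$, a $U_d$-null set $N$ satisfies $\int_{\mathbb{S}^{d-1}} \big( \text{Leb}\{ t\in[0,1] : t\varphi \in N\} \big)\, d\P_\mathbb{S}(\varphi) = 0$, so $\{ t : t\varphi \in N\}$ is Lebesgue-null for $\P_\mathbb{S}$-a.e.\ $\varphi$. Applying this to $N = \{ Q_1 \neq Q_2\}$ shows that, for $\P_\mathbb{S}$-a.e.\ $\varphi$, $Q_1(t\varphi) = Q_2(t\varphi)$ for a.e.\ $t \in [0,1]$; Definitions \ref{superquantile} and \ref{expectedshortfall} then give $S_1(u) = S_2(u)$ and $E_1(u) = E_2(u)$ whenever $u/\Vert u\Vert$ lies in this full-measure set of directions, i.e.\ for $U_d$-a.e.\ $u$, since the direction marginal of $U_d$ is $\P_\mathbb{S}$.

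The converse implications amount to reconstructing $Q$ from $S$, resp.\ $E$. Fix a direction $\varphi$ for which the radial integrability $\int_0^1 \Vert Q_i(t\varphi)\Vert\, dt < \infty$ holds for $i=1,2$; by the computation in Remark \ref{remS_E_continu} this is the case for $\P_\mathbb{S}$-a.e.\ $\varphi$ as soon as $\nu_i \in \mathcal{P}_1(\RR^d)$. Along such a ray the maps $r \mapsto \int_r^1 Q_i(t\varphi)\, dt = (1-r) S_i(r\varphi)$ and $r \mapsto \int_0^r Q_i(t\varphi)\, dt = r\, E_i(r\varphi)$ are absolutely continuous on $[0,1]$, with a.e.\ derivatives $-Q_i(r\varphi)$ and $Q_i(r\varphi)$ respectively. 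If $S_1 = S_2$ $U_d$-a.e., the disintegration above gives, for $\P_\mathbb{S}$-a.e.\ $\varphi$, that $S_1(\cdot\,\varphi)$ and $S_2(\cdot\,\varphi)$ agree for a.e.\ argument, hence that the continuous primitives $r \mapsto (1-r)S_i(r\varphi)$ agree on all of $(0,1)$; differentiating yields $Q_1(r\varphi) = Q_2(r\varphi)$ for a.e.\ $r$, whence $Q_1 = Q_2$ $U_d$-a.e., which is (ii). The implication (iv) $\Rightarrow$ (ii) is obtained identically from $r\, E_i(r\varphi) = \int_0^r Q_i(t\varphi)\, dt$.

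I expect the only delicate points to be the measure-theoretic translation of ``$U_d$-a.e.'' into ``Lebesgue-a.e.\ along $\P_\mathbb{S}$-a.e.\ ray'', the restriction to directions along which the radial integrability \eqref{RadialInteg_finite} holds, and the upgrade from a.e.\ to everywhere equality of the radial primitives before differentiating; relation \eqref{E_eq_S} could alternatively be used to bypass (ii) when relating $S$ and $E$, but routing everything through (ii) keeps the argument uniform. None of these steps is genuinely hard given Remark \ref{remS_E_continu}.
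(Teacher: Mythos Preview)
Your argument is correct and rests on the same radial disintegration of $U_d$ as the paper, but the logical routing differs. The paper establishes the cycle $(ii)\Rightarrow(iii)\Rightarrow(iv)\Rightarrow(ii)$: for $(iii)\Rightarrow(iv)$ it invokes the identity \eqref{E_eq_S} together with $\int_0^1 Q_i(t\varphi)\,dt = \lim_{r\to 0^+} S_i(r\varphi)$, and for $(iv)\Rightarrow(ii)$ it shows $\int_a^b Q_1(t\varphi)\,dt = \int_a^b Q_2(t\varphi)\,dt$ for every interval $[a,b]$ and then integrates over direction sets to obtain $\int_A Q_1\,dU_d = \int_A Q_2\,dU_d$ for arbitrary $A$. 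You instead prove $(iii)\Rightarrow(ii)$ and $(iv)\Rightarrow(ii)$ directly and symmetrically, by observing that along $\P_{\mathbb{S}}$-a.e.\ ray the functions $r\mapsto(1-r)S_i(r\varphi)$ and $r\mapsto rE_i(r\varphi)$ are absolutely continuous primitives of $\mp Q_i(\cdot\,\varphi)$, upgrading a.e.\ equality to everywhere equality by continuity, and then differentiating. Your route is more uniform and bypasses \eqref{E_eq_S} entirely; the paper's route, on the other hand, exhibits the structural link between $S$ and $E$ via \eqref{E_eq_S} as part of the proof. Both handle the same delicate point---passing from $U_d$-a.e.\ equality to everywhere equality of a continuous radial primitive---though you flag it more explicitly.
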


\begin{proof}
First of all, it is already known that $(i) \Leftrightarrow (ii)$. Indeed, with the choice of  $U_d$ as the reference distribution, McCann's theorem \cite{McCannThm} ensures that $(i) \Rightarrow (ii)$. 
Obviously, $(ii) \Rightarrow (i)$, from the very definition of push-forward measures that is $\nu_1(A) = U_d(Q_1^{-1}(A))$ and $\nu_2(A) = U_d(Q_2^{-1}(A))$.
Hereafter, we proceed through the implication loop $ (ii) \Rightarrow (iii)  \Rightarrow (iv)  \Rightarrow (ii)$. 
Let $\P_\mathbb{S}$ be the uniform probability measure on $\mathbb{S}^{d-1} =  \{\varphi \in \RR^d : \Vert \varphi \Vert_2 = 1 \}$. 
Let $R$ and $\varphi$ be drawn uniformly on $[0,1]$ and $\mathbb{S}^{d-1}$, respectively. 
Suppose that they are independent, so that their joint distribution is given by $d\P_{(R,\varphi)}(r,\varphi) = d\P_R(r)d\P_\mathbb{S}(\varphi)$. 
There, by definition, if $f : (r,\varphi) \mapsto r\varphi$ on $[0,1]\times \mathbb{S}^{d-1}$, then $U_d = f_{\#} \P_{(R,\varphi)}$. 
Hence, from the change of variables formula for push-forwards, for any Borel set $A = f( B \times C)$, for $B \subset [0,1]$, $C \subset \mathbb{S}^{d-1}$,

\begin{align}\label{S1_S2_A}
\int_A (S_1 - S_2)(u) dU_d(u) &= \int_B \int_C (S_1 - S_2)(r\varphi) d\P_\mathbb{S}(\varphi)dr, \nonumber \\
&= \int_B \int_C \Big( \frac{1}{1-r}  \int_r^1(Q_1-Q_2)(t\varphi) dt \Big) d\P_\mathbb{S}(\varphi)dr. 
\end{align}
Nonetheless, for any $r\in B$, by the same change of variables,
\begin{align}\label{S1_S2_B}
\int_C \int_r^1(Q_1-Q_2)(t\varphi) dt d\P_\mathbb{S}(\varphi) &= \int_{f(C \times [r,1])} (Q_1-Q_2)(u)dU_d(u).
\end{align}
Note that $U_d(f(C \times [r,1])) = (1-r) \P_\mathbb{S}(C)$, which is positive since $r< 1$.
Obviously, \eqref{S1_S2_B} vanishes as soon as $Q_1 = Q_2$ $U_d$-$a.e.$
It implies that \eqref{S1_S2_A} also vanishes, which justifies that $(ii)\Rightarrow(iii)$.
Furthermore, we {  claim that $(iii)\Rightarrow(iv)$}.
Indeed, we have
\begin{equation*}\label{int01QvanishesIfS12}
\int_0^1 Q_1(t\frac{u}{\Vert u \Vert}) dt = \lim\limits_{r \rightarrow 0^+} S_1(r\frac{u}{\Vert u \Vert})
\hspace{0.65cm}\text{and}\hspace{0.65cm} \int_0^1 Q_2(t\frac{u}{\Vert u \Vert}) dt = \lim\limits_{r \rightarrow 0^+} S_2(r\frac{u}{\Vert u \Vert}).
\end{equation*}
Consequently, if we assume that $S_1 = S_2$ $U_d$-$a.e.$, we obtain that
\begin{equation}\label{int01QvanishesIfSdoes}
\int_0^1 (Q_1-Q_2)(t\frac{u}{\Vert u \Vert}) dt = 0.
\end{equation}
Using \eqref{E_eq_S}, the desired result follows, $(iii)\Rightarrow(iv)$.
Finally, assume that $E_1 = E_2$ $U_d$-a.e. 
Consequently, for all $r \in ]0,1[$ and for all $\varphi \in \mathbb{S}^{d-1}$, 
$$
\int_0^r Q_1(t\varphi) dt = \int_0^r Q_2(t\varphi) dt.
$$
Using that $ \int_a^b = \int_0^b - \int_0^a$, for any $0\le a \le b \le 1$,
\begin{equation}\label{intabQdt}
\int_a^b Q_1(t\varphi) dt = \int_a^b Q_2(t\varphi) dt.
\end{equation}
For any measurable $B \subset \mathbb{S}^{d-1}$, by integrating \eqref{intabQdt} w.r.t. $\P_{\mathbb{S}}$, 
\begin{equation*}\label{intQdPrPvarphi}
\int_B \int_a^b Q_1(t\varphi) d\P_R(t)d\P_\mathbb{S}(\varphi) = \int_B \int_a^b Q_2(t\varphi) d\P_R(t)d\P_\mathbb{S}(\varphi).
\end{equation*}
By use of $U_d = f_{\#} \P_{(R,\varphi)}$, a change of variables above yields, for any $A \subset \Bd$,
$$
\int_A Q_1(u) dU_d(u)  = \int_A Q_2(u) dU_d(u),
$$
and the result follows.

\end{proof}

Interestingly enough, our proposed integrated quantile functions are both simply related to the Kantorovich potential. Somehow, this development generalizes the work of \cite{rockafellar2014random} 
where the distribution function is related to the univariate superquantile by the way of the surexpectation function, which is nothing more than a particular primitive of the distribution function. 

\begin{prop}\label{psi_is_E}
The center-outward expected shortfall function of $\nu \in \mathcal{P}_1(\RR^d)$ satisfies, for any $u\in \Bd$,
\begin{equation}\label{eq:psi_is_E}  
\langle E(u),u \rangle = \psi(u).
\end{equation}
Moreover, the center-outward superquantile function of a compactly supported probability measure $\nu\in \mathcal{P}_*(\RR^d) $ verifies, for every $u\in \Bd$, 
\begin{equation}\label{psi_and_S}
\langle S(u), u \rangle = \frac{\Vert u \Vert}{1 - \Vert u \Vert}\Big(\psi( \frac{u}{\Vert u \Vert}) - \psi(u)\Big).
\end{equation}
\end{prop}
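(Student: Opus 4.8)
The plan is to exploit the identity $Q = \nabla\psi$ together with the fundamental theorem of calculus applied along the radial segment $L_u$. The key observation is that for a fixed direction $\varphi = u/\Vert u\Vert$, the scalar function $g(t) := \psi(t\varphi)$ is differentiable (a.e., and in fact everywhere on $\Bd$ under the standing regularity, or by taking the averaged subgradient), with derivative $g'(t) = \langle \nabla\psi(t\varphi), \varphi\rangle = \langle Q(t\varphi), \varphi\rangle$. Hence $\int_0^{\Vert u\Vert} \langle Q(t\varphi),\varphi\rangle\, dt = g(\Vert u\Vert) - g(0) = \psi(u) - \psi(0) = \psi(u)$, using the normalization $\psi(0) = 0$ from \eqref{hallin_psi01}. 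Taking the inner product of the definition of $E(u)$ with $u$ and pulling the (constant) vector $u$ inside the component-wise integral gives
\[
\langle E(u), u\rangle = \frac{1}{\Vert u\Vert}\int_0^{\Vert u\Vert} \langle Q(t\varphi), u\rangle\, dt = \int_0^{\Vert u\Vert} \langle Q(t\varphi),\varphi\rangle\, dt = \psi(u),
\]
which is \eqref{eq:psi_is_E}.

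For the superquantile identity \eqref{psi_and_S}, I would run the same computation on the complementary segment. Taking the inner product of the definition of $S(u)$ with $u$ and factoring out $\Vert u\Vert$ from $u = \Vert u\Vert\varphi$,
\[
\langle S(u), u\rangle = \frac{\Vert u\Vert}{1-\Vert u\Vert}\int_{\Vert u\Vert}^1 \langle Q(t\varphi),\varphi\rangle\, dt = \frac{\Vert u\Vert}{1-\Vert u\Vert}\bigl(g(1) - g(\Vert u\Vert)\bigr) = \frac{\Vert u\Vert}{1-\Vert u\Vert}\Bigl(\psi(\tfrac{u}{\Vert u\Vert}) - \psi(u)\Bigr),
\]
using $g(1) = \psi(\varphi) = \psi(u/\Vert u\Vert)$. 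The compact-support hypothesis for $\nu$ enters precisely here: it guarantees $Q$ extends continuously to the closed ball (Theorem \ref{regul_Q} with $K$ bounded), so $\psi$ is finite and continuous at the boundary point $\varphi$ with $\Vert\varphi\Vert = 1$, making $g(1)$ well-defined and the improper integral at the upper endpoint an honest integral; for the expected-shortfall identity no such assumption is needed because the lower endpoint $t=0$ is harmless once $\nu\in\mathcal P_1(\RR^d)$ ensures \eqref{RadialInteg_finite}.

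The main technical point — really the only obstacle — is justifying the fundamental theorem of calculus step $\int_0^s g'(t)\,dt = g(s) - g(0)$ when $\psi$ is merely convex and a priori only differentiable a.e. Here I would invoke that a finite convex function on an open interval is locally Lipschitz, hence absolutely continuous on compact subintervals, so the FTC holds with $g'$ its a.e.\ derivative; and along $L_u$ one has $g'(t) = \langle Q(t\varphi),\varphi\rangle$ for a.e.\ $t$ (with the averaged-subgradient convention of the paragraph preceding Assumption \ref{hypA}, or outright everywhere under Assumptions \ref{hypA}--\ref{hypB} via Theorem \ref{regul_Q}). One must also check the radial integrability \eqref{RadialInteg_finite} holds for the specific direction $\varphi$ in question so that $S(u)$ and $E(u)$ are genuinely defined there — under $\mathcal P_*(\RR^d)$ with compact support this is automatic, and in the non-compact case \eqref{eq:psi_is_E} is asserted for all $u\in\Bd$ where $E$ is finite, consistent with Remark \ref{remS_E_continu}. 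Everything else is the elementary manipulation of pulling the constant vector $u$ through a component-wise integral and the substitution $u = \Vert u\Vert\varphi$.
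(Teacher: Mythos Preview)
Your proof is correct and follows essentially the same route as the paper's: both parametrize $\psi$ along the radial segment, identify the one-dimensional derivative as $\langle Q(t\varphi),\varphi\rangle$ via the chain rule, and apply the fundamental theorem of calculus for convex functions (the paper invoking Rockafellar's Corollary~24.2.1 and Theorem~24.2, you the equivalent observation that finite convex functions are locally Lipschitz hence absolutely continuous). One minor correction: Theorem~\ref{regul_Q} does not assert continuous extension of $Q$ to the \emph{closed} ball; the paper's justification for the boundary term is instead that compact support of $\nu$ forces $\psi$ to be Lipschitz on $\overline{\mathbb{B}}(0,1)$, hence finite at $u/\Vert u\Vert$ with bounded subdifferential there.
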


\begin{proof} 

{ 
Fix $u\in\Bd$ and let $f(t) = \psi(tu)$. Then $f$ is a finite and convex function on 
$]-a,a[$ for some $a>1$, so that one can apply \cite{rockafellar-1970a}[Corollary 24.2.1]. 
Combined with $a.e.$ differentiability, \cite{rockafellar-1970a}[Theorem 25.3], this yields
\begin{equation}
\psi(u) = \int_0^1 \langle \nabla \psi(tu),u\rangle dt,
\end{equation}}
%
which can be rewritten as 
$$
\psi(u) = \langle \int_0^1 Q(tu) dt,u \rangle = \langle E(u),u \rangle,
$$
from a simple change of variables, leading to our first point \eqref{eq:psi_is_E}.
Moreover, denote for {  any} $u \in \Bd$, 
$$
f(t) = \psi(t\frac{u}{ \Vert u \Vert}).
$$
This function is finite on $[0,1]$ but not on a larger interval anymore. 
But, because $\nu \in \mathcal{P}_*(\RR^d)$, $f$ is differentiable everywhere on $]0,1[$, and we have by the chain rule formula, 
$$
f'(t)=\langle Q(t\frac{u}{ \Vert u \Vert}),\frac{u}{ \Vert u \Vert}\rangle.
$$
Since $f'(t)$ is a non-decreasing function from $\RR$ to $[-\infty,+\infty]$ finite at $t=\Vert u \Vert$, \cite{rockafellar-1970a}[Theorem 24.2] ensures that the function $F$, defined for all $x\in\RR$, by
$$
F(x) = \int_{\Vert u \Vert}^x f'(t) dt,
$$ 
is a well-defined closed proper convex function on $\mathbb{R}$.
We emphasize that $F(x)$ takes infinite values as soon as $x>1$, while, for $x = 1$, $F$ is well-defined as a Lebesgue integral, or as a limit of Riemann integrals, as explained in the proof of \cite{rockafellar-1970a}[Theorem 24.2], and it may take finite or infinite values.
In addition, the latter theorem tells us that $F(x)=f(x)+\alpha$ for some $\alpha \in \RR$ everywhere. 
But $F(\Vert u \Vert) = 0$ and $f(\Vert u \Vert) = \psi(u)$, so $F(x) = f(x) - \psi(u)$. 
Assuming that the support $\XX$ of $\nu$ is compact, $\psi$ must be lipschitz continuous, \cite{mccann2001polar}[Lemma 2], a fortiori bounded and the subdifferential $\partial \psi (K)$ must also be bounded, see $e.g.$ \cite{rockafellar1981favorable}. 
As a byproduct, $\psi(u / \Vert u \Vert)$ and $Q(u / \Vert u \Vert)$ are finite.
In this case, $F(1) = f(1)-\psi(u)$ is finite and can be rewritten as
$$
F(1) = \int_{\Vert u \Vert}^1 \langle Q(t\frac{u}{ \Vert u \Vert}),\frac{u}{ \Vert u \Vert}\rangle dt = \psi(\frac{u}{ \Vert u \Vert})  -  \psi( u),
$$
which implies \eqref{psi_and_S},  completing the proof of Proposition \ref{psi_is_E}.

\end{proof}

Thanks to this relation between the potential $\psi$ whose gradient gives $Q$ and the center-outward expected shortfall function, we are now able to characterize the convergence in distribution for a sequence of random vectors through superquantiles and expected shortfalls. 
For that purpose, we rely on existing results on the relation between convergence in distribution and center-outward quantile functions. 

\begin{lem}
\label{conv_pot}
Let $(X, X_n)$ be a sequence of random vectors with distributions $\nu$ and $\nu_n$ respectively, in $\mathcal{P}_*(\RR^d)$. 
Then, 
\begin{eqnarray}
\hspace{1cm} X_n \overset{\mathcal{L}}{\rightarrow} X 
&\Leftrightarrow &
{  \forall u \in \mathbb{B}(0,1), \, \lim_{n \rightarrow \infty} \psi_n(u) = \psi(u),} \label{pontwise_conv_pot0}\\
&\Leftrightarrow &
\lim_{n \rightarrow \infty}\supl_{u \in K} \vert \psi_n(u) - \psi(u)\vert=0, \label{pontwise_conv_pot} \\
&\Leftrightarrow & 
\lim_{n \rightarrow \infty}\supl_{u \in K} \Vert Q_n(u) - Q(u) \Vert =0, \label{rockaf_thm1}
\end{eqnarray} 
for every compact $K\subset \Bd$. {  In fact, uniform convergence of $\psi_n$ towards $\psi$ even holds on every compact $K\subset \mathbb{B}(0,1)$.}

\end{lem}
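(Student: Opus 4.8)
The plan is to establish the chain of equivalences by first reducing everything to known facts about center-outward quantile functions, and then transferring those facts to the potentials $\psi_n$ and back. The starting point is the characterization of convergence in distribution via center-outward quantiles: under the standing assumption that $\nu, \nu_n \in \mathcal{P}_*(\RR^d)$, it is known (see \cite{Hallin-AOS_2021}, \cite{del2019note}, and the convergence theory developed around \cite{Figalli2018}) that $X_n \overset{\mathcal{L}}{\rightarrow} X$ is equivalent to \eqref{rockaf_thm1}, i.e. local uniform convergence of $Q_n$ to $Q$ on every compact $K \subset \Bd$. So the real work is to insert \eqref{pontwise_conv_pot0} and \eqref{pontwise_conv_pot} into the loop, i.e. to show
$$
\eqref{rockaf_thm1} \Rightarrow \eqref{pontwise_conv_pot} \Rightarrow \eqref{pontwise_conv_pot0} \Rightarrow \eqref{rockaf_thm1},
$$
together with the final remark about uniform convergence on compacts of the open ball $\mathbb{B}(0,1)$ (not just $\Bd$).

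For $\eqref{rockaf_thm1} \Rightarrow \eqref{pontwise_conv_pot}$, I would fix a compact $K \subset \Bd$, enlarge it to a compact convex $\widetilde{K} \subset \Bd$ containing $K$ and the origin, and use the integral representation $\psi_n(u) = \int_0^1 \langle Q_n(tu), u\rangle\, dt$ and likewise for $\psi$ — this is exactly the identity $\psi(u) = \langle E(u), u\rangle$ established in Proposition \ref{psi_is_E} (more precisely its proof, which gives $\psi(u) = \int_0^1 \langle \nabla\psi(tu), u\rangle dt$). Then
$$
\sup_{u \in K} |\psi_n(u) - \psi(u)| \le \sup_{u \in K} \|u\| \int_0^1 \|Q_n(tu) - Q(tu)\|\, dt \le \sup_{v \in \widetilde{K}} \|Q_n(v) - Q(v)\|,
$$
and the right side tends to $0$ by \eqref{rockaf_thm1} applied to $\widetilde{K}$. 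The implication $\eqref{pontwise_conv_pot} \Rightarrow \eqref{pontwise_conv_pot0}$ is trivial since a singleton is compact. The main obstacle is the closing implication $\eqref{pontwise_conv_pot0} \Rightarrow \eqref{rockaf_thm1}$: here I would invoke convexity. Each $\psi_n$ is convex and finite on $\mathbb{B}(0,1)$, and pointwise convergence of convex functions on an open convex set to a (finite, hence continuous) convex limit automatically upgrades to uniform convergence on compact subsets — this is a classical result, e.g. \cite{rockafellar-1970a}[Theorem 10.8]. That gives \eqref{pontwise_conv_pot} back (and in fact on every compact $K \subset \mathbb{B}(0,1)$, which is the last sentence of the lemma). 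To then deduce \eqref{rockaf_thm1}, I would use the standard fact that locally uniform convergence of convex functions forces convergence of their (sub)gradients at points of differentiability of the limit, uniformly on compact sets on which $\psi$ is differentiable — and by Theorem \ref{regul_Q}, $\psi$ is differentiable on all of $\Bd$ with $\nabla\psi = Q$ a homeomorphism, so this applies on any compact $K \subset \Bd$. One clean way to package this is Attouch–Wets / graphical convergence of subdifferentials, or simply the elementary three-slope argument: for $u$ in the interior, $Q_n(u)$ is squeezed between difference quotients of $\psi_n$ along coordinate directions at scale $\epsilon$, which converge to the corresponding difference quotients of $\psi$, which in turn are within $o(1)$ of $Q(u) = \nabla\psi(u)$ as $\epsilon \to 0$; uniformity over compact $K$ follows from equicontinuity of $\nabla\psi$ on a slightly larger compact set.

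I expect the hard part to be the last step, getting from uniform convergence of the potentials to uniform convergence of the gradients $Q_n \to Q$, cleanly and with correct uniformity. The subtlety is that $Q_n = \nabla\psi_n$ need not be continuous (it is only defined a.e., or as the average of the subdifferential), whereas $Q = \nabla\psi$ is continuous on $\Bd$ by Theorem \ref{regul_Q}; so the argument has to control the full subdifferential $\partial\psi_n(u)$, not just a gradient. The monotonicity of subdifferentials of convex functions, combined with the fact that $\partial\psi(u) = \{Q(u)\}$ is a singleton for $u \in \Bd$, is what makes this work: any selection from $\partial\psi_n(u)$ is trapped, uniformly in $u \in K$, inside a shrinking neighborhood of $Q(u)$ once $\sup_{\widetilde{K}}|\psi_n - \psi|$ is small enough, because a subgradient at $u$ lies between the relevant one-sided difference quotients of $\psi_n$, hence asymptotically between those of $\psi$, hence asymptotically in $\partial\psi(u) = \{Q(u)\}$. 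Finally, for the closing of the loop $\eqref{rockaf_thm1} \Rightarrow X_n \overset{\mathcal{L}}{\rightarrow} X$ and its converse, I would simply cite the existing results relating center-outward quantile convergence to weak convergence (from \cite{Hallin-AOS_2021} and the references used to state Theorem \ref{regul_Q}), since the excerpt explicitly says the lemma relies on these; the contribution of the present proof is the potential-level reformulations \eqref{pontwise_conv_pot0}--\eqref{pontwise_conv_pot} and the strengthening to compacts of $\mathbb{B}(0,1)$.
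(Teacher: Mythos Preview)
Your overall architecture is reasonable, and in particular your implication \eqref{pontwise_conv_pot0} $\Rightarrow$ \eqref{pontwise_conv_pot} $\Rightarrow$ \eqref{rockaf_thm1} via \cite{rockafellar-1970a}[Theorems 10.8 and 25.7] (or your hands-on subgradient squeezing) is essentially the paper's own argument. However, there is a genuine gap in your step \eqref{rockaf_thm1} $\Rightarrow$ \eqref{pontwise_conv_pot}. You write ``enlarge $K$ to a compact convex $\widetilde K\subset\Bd$ containing $K$ and the origin'', but $\Bd = \mathbb{B}(0,1)\setminus\{0\}$ does not contain the origin, so no such $\widetilde K$ exists. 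More substantively, the integral representation $\psi_n(u)=\int_0^1\langle Q_n(tu),u\rangle\,dt$ forces you to evaluate $Q_n$ along the entire segment $\{tu: t\in[0,1]\}$, which accumulates at the origin; \eqref{rockaf_thm1} only controls $Q_n-Q$ on compacts of $\Bd$, hence not uniformly near $0$. Your displayed bound $\sup_K|\psi_n-\psi|\le \sup_{\widetilde K}\|Q_n-Q\|$ is therefore unjustified as stated. This is fixable (split the integral at some small $\xi>0$ and use that the family $(\partial\psi_n)$ is uniformly bounded on $\overline{\mathbb{B}}(0,D)$ via \cite{rockafellar-1970a}[Theorem 10.6], exactly as in the paper's proof of Theorem \ref{expshort_metrizes_weakcvgce}), but you do need to say it.

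A second, more structural issue: you take the equivalence $X_n\overset{\mathcal L}{\to}X \Leftrightarrow$ \eqref{rockaf_thm1} as a black box from the literature. The paper explicitly notes that the obvious reference (\cite{ghosal2021multivariate}[Theorem 4.1]) does not apply here because $Q$ fails to be a homeomorphism at the origin. Instead, the paper's route is: weak convergence $\Rightarrow$ pointwise convergence of $\psi_n$ on $\mathbb{B}(0,1)$ (via \cite{delBarrio2017}[Theorem 2.8], following \cite{Hallin-AOS_2021}[Prop.~3.3]) $\Rightarrow$ uniform on compacts (Rockafellar 10.8) $\Rightarrow$ uniform convergence of $Q_n$ (Rockafellar 25.7 plus a finite-cover argument) $\Rightarrow$ weak convergence (Portmanteau and dominated convergence). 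In other words, the paper enters the loop at the level of potentials, not quantiles, precisely to avoid the origin problem that bites your integral argument. Your route can be made to work, but it is not cleaner: you end up needing both the external input \emph{and} the origin-splitting fix, whereas the paper's ordering needs only the former.
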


\begin{proof} 
On the one hand, assume that $(X_n)$ converges in distribution to $X$. 
Then, the {  right-hand} side of \eqref{rockaf_thm1} is the main result of \cite{ghosal2021multivariate}[Theorem 4.1]  when $Q_n$ and $Q$ are homeomorphisms between convex sets, with uniform convergence on any compact subset. 
But, with reference distribution $U_d$, center-outward quantile maps are not, because of the discontinuity at the origin, see \cite{Hallin-AOS_2021}[Remark 3.4]. 
Nonetheless, as highlighted in the proof of \cite{Hallin-AOS_2021}[Proposition 3.3], the assumption that $X_n \overset{\mathcal{L}}{\rightarrow} X$ for $\nu_n, \nu\in\mathcal{P}_*(\RR^d)$ is sufficient to apply Theorem 2.8 in \cite{delBarrio2017}. 
As a consequence, $\psi_n(u)$ converges towards $\psi(u)$ for every $u\in\mathbb{B}(0,1)$, that is the right-hand side of \eqref{pontwise_conv_pot0}. 
Being finite and convex functions, \cite{rockafellar-1970a}[Theorem 10.8] ensures that such pointwise convergence implies the right-hand side of \eqref{pontwise_conv_pot}. 
Since $\psi_n$ and $\psi$ are differentiable on $\Bd$ as soon as $\nu_n,\nu \in \mathcal{P}_*(\RR^d)$, one can apply \cite{rockafellar-1970a}[Theorem 25.7] on any open and convex set $K'\subset \Bd$. 
This gives us the uniform convergence of $Q_n$ towards $Q$ for any compact $K$ included in an open and convex set $K'\subset \Bd$. 
To extend this to any compact $K$, we use that, by compacity, one can extract from the open cover $\{\mathbb{B}(x,\delta) ; x\in K\}$ a finite cover. 
Consequently,  it exists $N\in\mathbb{N}$ and $x_1,\cdots,x_N\in K$ such that
\begin{equation}\label{almost_supcvgceQn}
\sup\limits_{u\in K} \Vert Q_n(u) - Q(u) \Vert
\leq 
\sum_{k=1}^N \sup\limits_{u\in \mathbb{B}(x_{k},\delta)} \Vert Q_n(u) - Q(u) \Vert.
\end{equation}
Here, the closure of each ball $\mathbb{B}(x_{k},\delta)$ is a compact set, which, because of the choice of $\delta$, is a subset of some open and convex set $K'\subset \Bd$.  
But we already know that $Q_n$ uniformly converges towards $Q$ on such a set, thus the right-hand side of \eqref{almost_supcvgceQn} vanishes when $n\rightarrow 0$, which yields the right-hand side of \eqref{rockaf_thm1}.
It only remains to prove that {  $X_n \overset{\mathcal{L}}{\rightarrow} X$  as a direct consequence.}
This last claim relies on the Portmanteau theorem which says that
$(X_n)$ converges in distribution to $X$ iff for any bounded and continuous function $f$, 
\begin{equation}\label{carac_i}
\lim\limits_{n \rightarrow + \infty}\EE[ f(X_n) ] = \EE[f(X)].
\end{equation}
However, we clearly have for any bounded and continuous function $f$, 
\begin{align*}
\EE[ f(X_n) ]  &= \int_{\RR^d} f(x) d\nu_n(x) = \int_{\mathbb{B}(0,1)} f(Q_n(u)) dU_d(u).
\end{align*}
using the change of variables $\nu_n = Q_{n\#} U_d$. Hence, as
$f \circ Q_n$ is uniformly bounded, the dominated convergence theorem leads to \eqref{carac_i}.
\end{proof}

\begin{thm}\label{expshort_metrizes_weakcvgce}
Let $(X, X_n)$ be a sequence of random vectors with distributions $\nu$ and $\nu_n$ respectively, in $\mathcal{P}_*(\RR^d)$. 
Then, 
\begin{eqnarray}
\label{cvgcePointwiseE_n}
\hspace{1cm} X_n \overset{\mathcal{L}}{\rightarrow} X 
&\Leftrightarrow &
\forall u \in \mathbb{B}(0,1)\backslash \{0\}, \lim\limits_{n\rightarrow +\infty} E_n(u) = E(u), \\
&\Leftrightarrow & 
\lim_{n \rightarrow \infty}\supl_{u \in K} \Vert E_n(u) - E(u) \Vert =0, \label{pointwisecvcgeES}
\end{eqnarray} 
$\text{for every compact } K \subset \Bd$.

\end{thm}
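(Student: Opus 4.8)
The plan is to leverage the already-established equivalences of Lemma \ref{conv_pot} together with the identity $\langle E(u), u\rangle = \psi(u)$ from Proposition \ref{psi_is_E}, and the integral relation \eqref{E_eq_S}. The overall structure will be a short cycle: first show $X_n \overset{\mathcal{L}}{\to} X \Rightarrow$ \eqref{pointwisecvcgeES} $\Rightarrow$ \eqref{cvgcePointwiseE_n} $\Rightarrow X_n \overset{\mathcal{L}}{\to} X$, the last implication being the one that actually carries new content.

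For the forward direction, assume $X_n \overset{\mathcal{L}}{\to} X$. By Lemma \ref{conv_pot}, \eqref{rockaf_thm1} gives uniform convergence of $Q_n$ to $Q$ on every compact $K \subset \Bd$. Given a compact $K\subset\Bd$, I would enlarge it slightly to a compact set $K'$ with $K\subset K'\subset\Bd$ that is "star-shaped toward $0$ away from the origin", more precisely such that for every $u\in K$ the whole segment $\{t u/\Vert u\Vert : t\in[\Vert u\Vert, 1-\epsilon]\}$ together with a neighbourhood lies in $K'$ — in fact the image under $(t,u)\mapsto tu/\Vert u\Vert$ of $[\delta,1-\delta]\times K$ is compact in $\Bd$ for $\delta$ small, so uniform convergence holds there. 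Then for $u\in K$,
\begin{equation*}
\Vert E_n(u) - E(u)\Vert = \frac{1}{\Vert u\Vert}\left\Vert \int_0^{\Vert u\Vert} (Q_n - Q)\Big(t\frac{u}{\Vert u\Vert}\Big)\, dt\right\Vert \le \frac{1}{\Vert u\Vert}\int_0^{\Vert u\Vert}\Big\Vert (Q_n - Q)\Big(t\frac{u}{\Vert u\Vert}\Big)\Big\Vert\, dt,
\end{equation*}
and the integrand is uniformly small for $t$ bounded away from $0$. The contribution near $t=0$ is the delicate point: $Q_n$ and $Q$ are not controlled uniformly near the origin. Here I would use that near the origin $\psi_n,\psi$ are finite convex functions converging uniformly on a small closed ball $\overline{\mathbb{B}}(0,\delta)$ (the last sentence of Lemma \ref{conv_pot}), hence $\nabla\psi_n=Q_n$ is uniformly bounded on $\overline{\mathbb{B}}(0,\delta/2)$ by a constant independent of $n$ (uniform Lipschitz bound for a uniformly convergent sequence of convex functions), so $\int_0^{\delta/2}\Vert (Q_n-Q)(t u/\Vert u\Vert)\Vert\,dt \le C\,\delta/2$ can be made small. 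Splitting the integral at $\delta/2$ then yields \eqref{pointwisecvcgeES}. The implication \eqref{pointwisecvcgeES} $\Rightarrow$ \eqref{cvgcePointwiseE_n} is trivial since any $u\in\Bd$ lies in some compact $K\subset\Bd$.

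For the converse, assume \eqref{cvgcePointwiseE_n}. By Proposition \ref{psi_is_E}, $\psi_n(u) = \langle E_n(u), u\rangle \to \langle E(u), u\rangle = \psi(u)$ for every $u\in\Bd$; extending to $\psi_n(0)=\psi(0)=0$ trivially, we get pointwise convergence of $\psi_n$ to $\psi$ on all of $\mathbb{B}(0,1)$, which is precisely \eqref{pontwise_conv_pot0}, and Lemma \ref{conv_pot} then delivers $X_n\overset{\mathcal{L}}{\to} X$. I expect the main obstacle to be the behaviour near the origin in the forward direction — establishing the uniform-in-$n$ bound on $\Vert Q_n\Vert$ on a small ball around $0$ — which is why I would route it through the uniform convergence of the potentials $\psi_n$ on compacts of $\mathbb{B}(0,1)$ (the refined final claim of Lemma \ref{conv_pot}) rather than through \eqref{rockaf_thm1} alone, since the latter only covers compacts of $\Bd$.
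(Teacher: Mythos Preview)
Your proposal is correct and follows essentially the same route as the paper: the forward direction splits the integral defining $E_n$ into a part near $t=0$ (controlled by a uniform-in-$n$ bound on $\Vert Q_n\Vert$ coming from equi-Lipschitzness of the convex potentials $\psi_n$ on a compact ball, which the paper obtains via \cite{rockafellar-1970a}[Theorem 10.6]) and a part away from $0$ (controlled by \eqref{rockaf_thm1}), while the reverse direction goes through $\psi_n(u)=\langle E_n(u),u\rangle$ and Lemma \ref{conv_pot}, exactly as you do.
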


\begin{proof}
On the one hand, assume that $(X_n)$ converges in distribution to $X$. Then, it follows from
\eqref{rockaf_thm1} that $(Q_{n})$ converges uniformly to $Q$ on any compact
$K \subset \Bd$. 
{  For any $u \in \Bd$, we have from Definition \ref{expectedshortfall} that
\begin{align*}
\left\Vert E_{n}(u) - E(u) \right\Vert &\leq \frac{1}{\Vert u \Vert} \int_0^{\Vert u \Vert} R_n(t \frac{u}{\Vert u \Vert }) dt,
\end{align*}
where $R_n(v) = \left\Vert Q_{n}(v) - Q(v) \right\Vert.$
Fix a compact $K\subset \Bd$ such that for every $u\in K$, $C<\Vert u \Vert<D$ for some positive constants $C,D$. Then, we clearly have
\begin{align*}
\sup_{u\in K} \left\Vert E_{n}(u) - E(u) \right\Vert &\leq \frac{1}{C} \int_0^{D} \sup_{u\in K}  R_n(t \frac{u}{\Vert u \Vert }) dt.
\end{align*}
Consequently, for any $\xi\in \RR$ such that $0\leq \xi<D$, 
\begin{align}
\hspace{-0.2cm} \sup_{u\in K} \left\Vert E_{n}(u) - E(u) \right\Vert 
&\leq \frac{1}{C}\left( \int_0^{\xi} \hspace{-0.1cm} \sup_{u\in K} R_n(t \frac{u}{\Vert u \Vert }) dt + \int_{\xi}^{D} \hspace{-0.1cm}\sup_{u\in K} R_n(t \frac{u}{\Vert u \Vert }) dt\right) \hspace{-0.1cm}.
\label{schalesDecompSupEnE}
\end{align}
On the one hand, for all $t \in [\xi,D]$ and $u\in K$, $t u / \Vert u \Vert$ lies inside a compact $K' \subset \Bd$, so that the second term in \eqref{schalesDecompSupEnE} satisfies
\begin{equation}\label{cvgceEnpart1}
\int_{\xi}^{D} \sup_{u\in K} R_n(t \frac{u}{\Vert u \Vert }) dt 
\leq (D-\xi) \sup_{v\in K'} R_n(v) 
\leq \sup_{v\in K'} R_n(v),
\end{equation}
which vanishes when $n\rightarrow +\infty$. On the other hand,
\begin{equation}\label{cvgceEnpart2}
\int_0^{\xi} \sup_{u\in K} R_n(t \frac{u}{\Vert u \Vert }) dt \leq \xi \sup_{v\in \overline{\mathbb{B}}(0,D)} R_n(v).
\end{equation}
We now claim that $\sup_{v\in \overline{\mathbb{B}}(0,D)} R_n(v)$ is bounded by a constant.
Recall that, for any $v\in \overline{\mathbb{B}}(0,D)$, $Q_n(v)$ and $Q(v)$ belong to the subdifferentials $\partial \psi_n(v)$ and $\partial \psi(v)$, by definition. 
 Because $S = \overline{\mathbb{B}}(0,D)$ is a compact subset of the open unit ball, and because, for any $u \in S$, $\psi_n(u)$ is convergent and a fortiori bounded, see Lemma \ref{conv_pot}, we are in position to apply \cite{rockafellar-1970a}[Theorem 10.6]. 
 It directly implies that the sequence $(\psi_n)$ is uniformly bounded and equi-Lipschitz on $S$. 
 However, it is well-known that any $L$-lipschitz convex function $\phi$ on $S$ must have a bounded subdifferential $\partial \phi(S)$, see $e.g.$ \cite{rockafellar1981favorable}. 
Indeed, for any $u\in S$ and any $\xi \in \partial \phi(S)$, consider $\delta >0$ such that $y = u - \delta \xi$ belongs to $S$. By definition of $\partial \phi(S)$ and the lipschitz constant $L$, 
 $$
 L \delta \Vert \xi \Vert \geq \phi(y) - \phi(u) \geq \langle \xi, y - u \rangle =  \delta \Vert \xi \Vert^2.
 $$
 This immediately gives us the existence of a uniform bound on the family of subdifferentials $(\partial \psi_n(S))_n$. 
 Because $\partial \psi(S)$ is also bounded, by lipschitz regularity of $\psi$ on $S$, 
\begin{equation}\label{rockafThm10.6}
 \sup_{v\in \overline{\mathbb{B}}(0,D)} R_n(v) \leq  \sup_{v\in \overline{\mathbb{B}}(0,D)} \Vert Q_n(v)\Vert + \Vert Q(v) \Vert \leq M < +\infty.
\end{equation}
Therefore the right-hand side of \eqref{cvgceEnpart2} can be bounded by $\xi M$ for $M$ given in \eqref{rockafThm10.6}.
Then it follows from \eqref{schalesDecompSupEnE} and \eqref{cvgceEnpart1} that, for any $\xi \in [0,D[$, it exists $n_1 \in \mathbb{N}$ such that, for all $n\geq n_1$,
\begin{equation*}
\sup_{u\in K} \left\Vert E_{n}(u) - E(u) \right\Vert \leq \frac{1}{C}\Big( \xi M +\sup_{v\in K'} R_n(v)\Big).
\end{equation*}
Thus, for any $\xi \in [0,D[$, 
\begin{equation*}
\lim\limits_{n\rightarrow +\infty}\sup_{u\in K} \left\Vert E_{n}(u) - E(u) \right\Vert \leq \frac{\xi M}{C}.
\end{equation*}
which leads to the right-hand side of \eqref{pointwisecvcgeES} as $\xi$ goes to zero.}
In addition, the right-hand side of \eqref{pointwisecvcgeES} immediately implies
the right-hand side of \eqref{cvgcePointwiseE_n}. On the other hand, assume that
the right-hand side of \eqref{cvgcePointwiseE_n} holds. 
Then, we obtain from \eqref{eq:psi_is_E} that $\forall u \in \mathbb{B}(0,1)$,
$$ 
 \lim\limits_{n\rightarrow +\infty} \psi_n(u) = \psi(u).
$$ 
Thus, the desired result follows from Lemma \ref{conv_pot}, which completes the proof of Theorem \ref{expshort_metrizes_weakcvgce}. 
\end{proof}

Our last result requires some uniform integrability assumption on the sequence $(X_n)$.

\begin{thm}\label{cvgceSk}
Let $(X, X_n)$ be a sequence of random vectors with distributions $\nu$ and $\nu_n$ respectively, in $\mathcal{P}_*(\RR^d)$.  
In addition, suppose that there exists a random variable $Z$ greater than $1$ such that $\EE [Z \ln(Z)] < +\infty$ and for all $n\in \mathbb{N}$,
\begin{equation}\label{VertXnZ}
\Vert X_n\Vert \leq Z \hspace{1cm} a.s.
\end{equation}
Then,
\begin{equation}\label{convSnS}
X_n \overset{\mathcal{L}}{\rightarrow} X 
\Leftrightarrow 
\lim_{n \rightarrow \infty} S_n(u) = S(u),
\end{equation}
for almost all $u\in \mathbb{B}(0,1)$.
\end{thm}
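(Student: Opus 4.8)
The plan is to derive both implications from the identity~\eqref{E_eq_S}: writing $\varphi=u/\Vert u\Vert$, it reads $(1-\Vert u\Vert)\,S_n(u)=\int_0^1 Q_n(t\varphi)\,dt-\Vert u\Vert\,E_n(u)$, and since Theorem~\ref{expshort_metrizes_weakcvgce} already characterizes $X_n\overset{\mathcal{L}}{\rightarrow}X$ by the (locally uniform, pointwise) convergence $E_n\to E$, everything reduces to controlling the full radial integral $\int_0^1 Q_n(t\varphi)\,dt$. Its ``interior'' part is harmless: for every $D<1$, the uniform convergence $Q_n\to Q$ on compacta of $\Bd$ from~\eqref{rockaf_thm1} together with the uniform bound $\sup_n\sup_{\Vert v\Vert\le D}\Vert Q_n(v)\Vert<+\infty$ obtained inside the proof of Theorem~\ref{expshort_metrizes_weakcvgce} yields, by dominated convergence, $\int_0^D Q_n(t\varphi)\,dt\to\int_0^D Q(t\varphi)\,dt$ uniformly in $\varphi$. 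The only delicate point, in both directions, is the tail $\int_D^1 Q_n(t\varphi)\,dt$ near the endpoint $t=1$, and this is exactly where the hypothesis on $Z$ enters.

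For $X_n\overset{\mathcal{L}}{\rightarrow}X\Rightarrow S_n\to S$, I would use the push-forward change of variables recalled in Remark~\ref{remS_E_continu}, $\EE[h(\Vert X_n\Vert)]=\int_{\mathbb{S}^{d-1}}\int_0^1 h(\Vert Q_n(t\varphi)\Vert)\,dt\,d\P_{\mathbb{S}}(\varphi)$, applied with the superlinear function $h(s)=s\ln_+s$, $\ln_+s=\max(\ln s,0)$. Since $\Vert X_n\Vert\le Z$ a.s., $Z\ge1$ and $\EE[Z\ln Z]<+\infty$, this gives
$$
\sup_{n}\int_{\mathbb{S}^{d-1}}\int_0^1 \Vert Q_n(t\varphi)\Vert\,\ln_+\Vert Q_n(t\varphi)\Vert\,dt\,d\P_{\mathbb{S}}(\varphi)\ \le\ \EE[Z\ln Z]\ <\ +\infty ,
$$
so that, by the de la Vall\'ee-Poussin criterion, the family $\{(t,\varphi)\mapsto Q_n(t\varphi)\}_n$ is uniformly integrable on $[0,1]\times\mathbb{S}^{d-1}$ for $dt\otimes d\P_{\mathbb{S}}$. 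Combining this with the pointwise convergence $Q_n(t\varphi)\to Q(t\varphi)$ a.e. (from~\eqref{rockaf_thm1}), Vitali's convergence theorem yields $\int_{\mathbb{S}^{d-1}}\int_0^1\Vert Q_n(t\varphi)-Q(t\varphi)\Vert\,dt\,d\P_{\mathbb{S}}(\varphi)\to0$; in particular $\varphi\mapsto\int_0^1 Q_n(t\varphi)\,dt$ converges in $L^1(\P_{\mathbb{S}})$, and feeding this together with $E_n\to E$ back into~\eqref{E_eq_S} (then dividing by $1-\Vert u\Vert>0$) gives $S_n(u)\to S(u)$ for almost every $u$.

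For the converse, suppose $S_n\to S$ a.e. By Fubini in polar coordinates ($dU_d=dr\otimes d\P_{\mathbb{S}}$), for a.e. $\varphi$ one has $S_n(r\varphi)\to S(r\varphi)$ for a.e. $r\in(0,1)$, hence $\int_r^1 Q_n(t\varphi)\,dt=(1-r)\,S_n(r\varphi)\to\int_r^1 Q(t\varphi)\,dt$ and, by subtraction, $\int_{r_1}^{r_2}Q_n(t\varphi)\,dt\to\int_{r_1}^{r_2}Q(t\varphi)\,dt$ for a.e. $0<r_1<r_2<1$. The uniform integrability of $\{(t,\varphi)\mapsto Q_n(t\varphi)\}_n$ established above --- which rests only on $\Vert X_n\Vert\le Z$ and $\EE[Z\ln Z]<+\infty$, not on convergence in law --- controls the mass near $t=0$ and $t=1$ and lets one let $r_1\to0$, $r_2\to1$ to obtain $\int_0^1 Q_n(t\varphi)\,dt\to\int_0^1 Q(t\varphi)\,dt$ for a.e. $\varphi$; by~\eqref{E_eq_S} this is $E_n\to E$ a.e. on $\Bd$. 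Since, by Proposition~\ref{psi_is_E}, the Kantorovich potentials satisfy $\psi_n(u)=\langle E_n(u),u\rangle$ and are finite and convex on $\mathbb{B}(0,1)$, a.e. convergence of $(\psi_n)$ propagates to pointwise convergence on all of $\mathbb{B}(0,1)$ by \cite{rockafellar-1970a}[Theorem 10.8], whence $X_n\overset{\mathcal{L}}{\rightarrow}X$ by Lemma~\ref{conv_pot}.

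I expect the endpoint $t=1$ to be the main obstacle: unlike the expected shortfall, whose defining integral remains in a compact subset of the open ball, the superquantile integrates $Q_n$ up to the boundary, where $\Vert Q_n\Vert$ need not be uniformly bounded and mass can leak in the limit; this is impossible to preclude under mere integrability, and $\EE[Z\ln Z]<+\infty$ is precisely the $L\log L$ / de la Vall\'ee-Poussin condition making the change-of-variables identity above produce the required uniform integrability. Everything else is borrowed from Theorem~\ref{expshort_metrizes_weakcvgce} and Lemma~\ref{conv_pot} or is routine.
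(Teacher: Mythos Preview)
Your overall strategy via the identity~\eqref{E_eq_S} mirrors the paper's, but there is a genuine gap in the uniform-integrability step that affects both implications. You correctly establish, via de la Vall\'ee-Poussin, uniform integrability of the family $\{(t,\varphi)\mapsto Q_n(t\varphi)\}_n$ on the \emph{product} space $[0,1]\times\mathbb{S}^{d-1}$, namely $\sup_n\int_{\mathbb{S}^{d-1}}\int_0^1\Phi(\Vert Q_n(t\varphi)\Vert)\,dt\,d\P_{\mathbb{S}}(\varphi)<\infty$ with $\Phi(s)=s\ln_+s$. Vitali on the product then yields $\int_{\mathbb{S}^{d-1}}\int_0^1\Vert Q_n-Q\Vert\,dt\,d\P_{\mathbb{S}}\to0$, hence $g_n(\varphi):=\int_0^1 Q_n(t\varphi)\,dt$ converges to $g(\varphi)$ only in $L^1(\P_{\mathbb{S}})$. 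That is \emph{not} enough to conclude $g_n(\varphi)\to g(\varphi)$ for almost every $\varphi$, which is exactly what is needed to get $S_n(u)\to S(u)$ for almost every $u$ from~\eqref{E_eq_S}; a typewriter-type sequence obstructs this inference. The same defect recurs in your converse: uniform integrability on the product does not imply, for almost every fixed $\varphi$, uniform integrability of $\{t\mapsto Q_n(t\varphi)\}_n$ on $[0,1]$, so you cannot ``let $r_1\to0$, $r_2\to1$'' along individual sign curves as you claim.

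The paper closes this gap by pulling the supremum \emph{inside} the integral. From $\Vert X_n\Vert\le Z$ it reads $\EE\bigl[\sup_n\Phi(\Vert X_n\Vert)\bigr]\le\EE[Z\ln Z]<\infty$ and, identifying $X_n$ with $Q_n(U)$ for a common $U\sim U_d$, obtains
\[
\int_{\mathbb{S}^{d-1}}\int_0^1\sup_{n}\Phi\bigl(\Vert Q_n(t\varphi)\Vert\bigr)\,dt\,d\P_{\mathbb{S}}(\varphi)<\infty .
\]
Disintegrating in $\varphi$ then gives, for almost every $\varphi$, $\int_0^1\sup_n\Phi(\Vert Q_n(t\varphi)\Vert)\,dt<\infty$, hence uniform integrability of $(Q_n(\cdot\,\varphi))_n$ \emph{along that sign curve}. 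Lebesgue--Vitali along each such curve yields $g_n(\varphi)\to g(\varphi)$ for a.e.\ $\varphi$, and the same sign-curve uniform integrability supplies the endpoint control needed in the converse. The crucial object is therefore $\sup_n$ inside the double integral, not outside; your product-space argument loses precisely this.

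Your final step in the converse---propagating a.e.\ convergence of $\psi_n=\langle E_n(\cdot),\cdot\rangle$ to pointwise convergence on all of $\mathbb{B}(0,1)$ via \cite{rockafellar-1970a}[Theorem~10.8] before invoking Lemma~\ref{conv_pot}---is correct and in fact slightly more careful than the paper, which applies~\eqref{cvgcePointwiseE_n} directly from a.e.\ convergence of $E_n$.
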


\begin{rem}
One can observe that if the support of every $\nu_n$ is bounded by some constant $M$, \eqref{VertXnZ} is no longer needed. Under this restrictive assumption, there is no mass going out to infinity in any direction and one can easily check that 
$$
X_n \overset{\mathcal{L}}{\rightarrow} X 
\Leftrightarrow 
\lim_{n \rightarrow \infty} \supl_{u\in K} \Vert S_n(u) - S(u) \Vert = 0,
$$
for every compact $K \subset \Bd$.
\end{rem}

\begin{proof}
On the one hand, assume that $(X_n)$ converges in distribution to $X$. We clearly have from \eqref{VertXnZ} that, if $\Phi(x) = x \ln(x)$, then
\begin{equation*}
\EE [ \; \supl_{n \in \mathbb{N} } \Phi(X_n) ] \leq \EE [Z \ln(Z)] < +\infty.
\end{equation*}
Hence, using the same change of variables as in the proof of Theorem \ref{caracnu1_nu2}, we obtain that 
\begin{equation}\label{intUdQnfinite}
\EE [ \; \supl_{n \in \mathbb{N} } \Phi(X_n) ] = \int_{\mathbb{S}^{d-1}} \int_0^1 \supl_{n \in \mathbb{N} } \Vert Q_n(t\varphi) \Vert \ln(\Vert Q_n(t\varphi)\Vert) dt d\P_{\mathbb{S}}(\varphi) <+\infty,
\end{equation}
where we recall that $\mathbb{S}^{d-1} = \{ \varphi \in \RR^d : \Vert \varphi \Vert_2 = 1\}$. Consequently, we deduce from \eqref{intUdQnfinite} that for almost all $\varphi \in \mathbb{S}^{d-1}$, 
\begin{equation}\label{int_01_supQ_finite}
\supl_{n \in \mathbb{N}} \int_0^1 \Vert Q_n(t\varphi) \Vert \ln(\Vert Q_n(t\varphi)\Vert) dt \leq \int_0^1 \supl_{n \in \mathbb{N} } \Vert Q_n(t\varphi) \Vert \ln(\Vert Q_n(t\varphi)\Vert) dt <+\infty.
\end{equation}
It clearly leads to the uniform integrability of $(Q_n)$ along sign curves, see \cite{Bogachev2007}[Theorem 4.5.9]. However, we already saw from Lemma \ref{conv_pot} that the convergence in distribution of $(X_n)$ to $X$ implies the pointwise convergence of $Q_n$ to $Q$ on $\Bd$. Therefore, it follows from the Lebesgue-Vitali theorem \cite{Bogachev2007}[Theorem 4.5.4] that for {  almost} all $\varphi \in \mathbb{S}^{d-1}$, 
$$
\lim\limits_{n \rightarrow +\infty} \int_0^1 Q_n(t\varphi) dt = \int_0^1 Q(t\varphi) dt, 
$$
which means that for {  almost} all $u \in \Bd$, 
\begin{equation}\label{limIntQn}
\lim\limits_{n \rightarrow +\infty} \int_0^1 Q_n \Big(t\frac{u}{\Vert u \Vert}\Big) dt = \int_0^1 Q\Big(t\frac{u}{\Vert u \Vert}\Big) dt.
\end{equation}
Hereafter, we already saw from \eqref{E_eq_S} that 
\begin{equation}\label{eqSn_S}
S_n(u) - S(u) = \frac{1}{1-\Vert u \Vert} \left( \int_0^1 (Q_n-Q)\Big(t \frac{u}{\Vert u \Vert} \Big) dt - \Vert u \Vert \Big( E_n(u) - E(u) \Big)\right).
\end{equation}
Finally, the right-hand side of \eqref{convSnS} follows from \eqref{cvgcePointwiseE_n} together with \eqref{limIntQn} and \eqref{eqSn_S}. On the other hand, assume that the right-hand side of \eqref{convSnS} holds. 
We have for {  almost} all $u\in \Bd$,
\begin{equation}\label{limIntQnVertU}
\lim\limits_{n \rightarrow +\infty} \int_{\Vert u \Vert}^1 Q_n \Big(t\frac{u}{\Vert u \Vert}\Big) dt = \int_{\Vert u \Vert}^1 Q\Big(t\frac{u}{\Vert u \Vert}\Big) dt.
\end{equation}
Hence, we obtain from \eqref{limIntQnVertU} {  that for almost every $r\in]0,1[$ and $\varphi \in \mathbb{S}^{d-1}$,
}
\begin{equation}\label{limIntQnR}
\lim\limits_{n \rightarrow +\infty} \int_{r}^1 Q_n (t\varphi ) dt = \int_{r}^1 Q(t \varphi) dt.
\end{equation}
Obviously, for {  almost} all $r\in]0,1[$ and {  almost} all $\varphi \in \mathbb{S}^{d-1}$,
\begin{align*}
\lim\limits_{n \rightarrow +\infty} \int_{0}^1 (Q_n-Q) (t\varphi ) dt &= \lim\limits_{n \rightarrow +\infty}\left( \int_0^r (Q_n-Q) (t\varphi ) dt +  \int_r^1 (Q_n-Q) (t\varphi ) dt\right) ,\\
&= \lim\limits_{n \rightarrow +\infty} \int_0^r (Q_n-Q) (t\varphi ) dt.
\end{align*}
In addition, this integral is always finite {  since \eqref{RadialInteg_finite} holds for integrable probability measures}. As the left-hand side does not depend on $r$,
\begin{equation}\label{limr0}
\lim\limits_{n \rightarrow +\infty} \int_{0}^1 (Q_n-Q) (t\varphi ) dt = \lim\limits_{r\rightarrow 0} \lim\limits_{n \rightarrow +\infty}  \int_0^r (Q_n-Q) (t\varphi ) dt.
\end{equation}
From the uniform integrability of $(Q_n)$ along sign curves, we obtain that 
$$
\lim\limits_{r \rightarrow 0 } \lim\limits_{n \rightarrow +\infty } \int_0^r \Vert Q_n(t\varphi) - Q(t\varphi) \Vert dt = 0.
$$
Hence, by use of \eqref{limr0}, we find that 
$$
\lim\limits_{n \rightarrow +\infty} \int_0^1 Q_n(t\varphi) dt = \int_0^1 Q(t\varphi) dt.
$$
It ensures that for {  almost} all $u\in \Bd$,
\begin{equation}\label{limint01Qu}
\lim\limits_{n \rightarrow +\infty}  \int_0^1 Q_n\Big(t\frac{u}{\Vert u \Vert} \Big) dt = \int_0^1 Q\Big(t\frac{u}{\Vert u \Vert} \Big) dt.
\end{equation}
Consequently, we deduce from \eqref{eqSn_S} and \eqref{limint01Qu} that for {  a.e.} $u\in\Bd$,
\begin{equation}\label{ThmSn_En_E}
\lim\limits_{n \rightarrow +\infty}  E_n(u) = E(u)
\end{equation}
Finally, it follows from \eqref{ThmSn_En_E} together with \eqref{cvgcePointwiseE_n} that $(X_n)$ converges in distribution to $X$, which completes the proof of Theorem \ref{cvgceSk}. 
\end{proof}

\section{A class of reference measures}\label{class_ref}

Until now, the concepts of MK quantiles, superquantiles and expected shortfalls strongly rely on the choice of the reference distribution, and one may question the choice of $U_d$. 
This issue is discussed hereafter, with explicit MK quantile functions for generalized gamma models, by calibrating the reference distribution. 
Ultimately, one obtains nested regions indexed by their probability content.
The choice of the reference distribution $\mu$ is a major tool to adapt MK quantiles, ranks and signs to any task encountered.   
Notably, the uniform distribution over the unit hypercube has advantages concerning marginal independence, \cite{deb2023multivariate,ghosal2021multivariate}, whereas orthogonal invariance only holds with a spherical reference distribution, \cite{ghosal2021multivariate,Hallin_mordant_2022}. 
$U_d$ plays a specific role for the interpretation of quantile regions indexed by a probability content level. 
This section deals with an alternative class of reference measures defined by $\alpha$-level sets on the $p$-unit ball, that preserve the interpretability of $U_d$.
We also consider the restriction of $U_d$ to $\RR^d_+$, with a \textit{left-to-right} ordering that might be preferred in the subsequent risk applications to the \textit{center-outward} one.
Denote by $\mathbb{S}^{d,p}$ the unit sphere $\mathbb{S}^{d,p}=\{\varphi \in \RR^d : \Vert \varphi \Vert_p = 1 \}$ with 
$$ 
\Vert \varphi \Vert_p^p = \sum_{k=1}^d \vert \varphi_k \vert^p,
$$
and $\mathbb{S}^{d,p}_+ = \mathbb{S}^{d,p}\cap\RR^d_+$. 
 Moreover, let $q$ be the Holder conjugate of $p$, given by
$$
\frac{1}{p}+\frac{1}{q}=1.
$$

\begin{definition}\label{p_sphericunif}
The $q$-spherical conjugate distribution $U_{d,q}$ is defined as the product $R\Phi$ between two independent random variables $R$ and $\Phi$ where 
\begin{itemize}
\item $R$ has uniform distribution on $[0,1]$, 
\item  the distribution of $\Phi\in \mathbb{S}^{d,q}$ is given by $\Phi = \Psi^{\otimes (p-1)}$ for $\Psi$ uniformly drawn on $\mathbb{S}^{d,p}$, and $\otimes$ the component-wise exponent.
\end{itemize}
The restriction of $U_{d,q}$ to the convex cone $\RR^d_+$ is denoted by $U_{d,q}^+$.
\end{definition}
With reference measure $U_{d,q}$, the MK quantile function $Q_q$ can always be defined almost everywhere from a convex potential $\psi$ by Definition \ref{defQ}, for which we can always assume that $\psi(0)=0$.
At any point $u$ of non differentiability of $\psi$, one can still define $Q_q(u)$ as the average of the subdifferential $\partial \psi(u)$, so that $Q_q$ is defined everywhere on the $q$-unit ball. 
The interpretability of the quantile concepts remains the same among this class, as the $U_{d,q}$-probability of the $q$-ball of radius $\alpha \in [0,1]$ is $\alpha$, 
\begin{equation*}
\mathbb{P}(\Vert R \Phi \Vert_q \leq \alpha) = \mathbb{P}( R \leq \alpha) = \alpha.
\end{equation*}
Crucially for us, our definitions and properties of superquantiles and expected shortfalls naturally adapt. 
Definition \ref{superquantile} as well as Definition \ref{expectedshortfall} extend, for all $u\in\mathbb{B}(0,1)$, to 
$$
S_q(u) = \frac{1}{1-\Vert u \Vert_q} \int_{\Vert u \Vert_q}^1 Q_q(t \frac{u}{\Vert u \Vert_q}) dt
$$ 
and
$$
E_q(u) = \frac{1}{\Vert u \Vert_q} \int_0^{\Vert u \Vert_q} Q_q(t \frac{u}{\Vert u \Vert_q}) dt.
$$

We now develop on how to sample uniformly on $p$-spheres $\mathbb{S}^{d,p}$.
Let $\Gamma$ stand for the Euler Gamma function and denote by $\mathcal{L}_p$ 
 the probability distribution with density function
$$
f_p(x) = \frac{p}{\Gamma(p^{-1})} \exp(-x^p) I_{\RR_+}(x).
$$
A random variable $X$ on $\RR_+$ is drawn from $\mathcal{L}_p$ if and only if $X^p$ follows a Gamma distribution with shape and scale parameters $1/p$ and $1$. The following lemma indicates how to sample uniformly on $p$-spheres when $p>0$.

\begin{lem}[\cite{Barthe2005,schechtman1990volume}]\label{lemschechtman}
For any real $p>0$, the components of a random vector $X\in\RR^d_+$ are independent with distribution $\mathcal{L}_p$ if and only if the random variables $\Vert X \Vert_p$ and $\Vert X \Vert_p^{-1}X$ are independent where 

\begin{enumerate}
\item[-] \hspace{-0.2cm} $\Vert X \Vert_p^{-1}X$ is uniformly distributed on $\mathbb{S}^{d,p}_+$,\\
\item[-] \hspace{-0.2cm} $\Vert X \Vert_p^p$ has Gamma distribution with shape and scale parameters $d/p$ and $1$.
\end{enumerate}
\end{lem}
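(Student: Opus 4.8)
The plan is to reduce both directions to one change of variables into $\ell_p$-polar coordinates, together with the elementary fact that a joint density splits as a product over blocks of coordinates if and only if the corresponding blocks are independent. Write $c_p = p/\Gamma(p^{-1})$, so that $f_p(x) = c_p\, e^{-x^p} I_{\RR_+}(x)$, and recall that ``uniform on $\mathbb{S}^{d,p}_+$'' must be read as distributed according to the normalized \emph{cone measure} $\omega$ on $\mathbb{S}^{d,p}_+$, defined by $\omega(A) = d\cdot \mathrm{Leb}_d(\{t\theta : \theta \in A,\ 0\le t\le 1\})$; this is the notion of uniformity for which the statement is correct (for $p=2$ it coincides with the usual surface measure, but for $p\ne 2$ it differs from the $(d-1)$-dimensional Hausdorff measure on $\mathbb{S}^{d,p}$). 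The one preliminary I will establish first is the $\ell_p$-polar decomposition: for every non-negative measurable $g$ on $\RR^d_+$,
$$\int_{\RR^d_+} g(x)\,dx = \int_{\mathbb{S}^{d,p}_+} \int_0^\infty g(r\theta)\, r^{d-1}\,dr\,d\omega(\theta),$$
which says exactly that the bijection $x\mapsto (\|x\|_p,\, \|x\|_p^{-1}x)$ pushes Lebesgue measure on $\RR^d_+$ forward to $r^{d-1}\,dr \otimes d\omega$. I will prove it by checking it on ``polar boxes'' $\{t\theta : \theta \in A,\ a\le t\le b\}$, where it reduces to $\mathrm{Leb}_d(\{t\theta : \theta\in A,\ t\le b\}) = b^d\,\mathrm{Leb}_d(\{t\theta:\theta\in A,\ t\le 1\})$ by homogeneity of Lebesgue measure under dilations, and then extending by a monotone class argument.

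For the forward direction, I will assume $X$ has i.i.d.\ components $\sim\mathcal{L}_p$, so $X$ has Lebesgue density $c_p^d\, e^{-\|x\|_p^p}I_{\RR^d_+}(x)$, using $\sum_k x_k^p = \|x\|_p^p$ on $\RR^d_+$. Since $\|r\theta\|_p = r$ for $\theta\in\mathbb{S}^{d,p}_+$, the polar decomposition shows that in the coordinates $(r,\theta) = (\|X\|_p,\,\|X\|_p^{-1}X)$ the law of $X$ has density $c_p^d\, r^{d-1}e^{-r^p}$ with respect to $dr\otimes d\omega$. This density is a function of $r$ times a function (here a constant) of $\theta$, hence $\|X\|_p$ and $\|X\|_p^{-1}X$ are independent, the latter being uniform on $\mathbb{S}^{d,p}_+$ by definition of $\omega$. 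The marginal density of $\|X\|_p$ is proportional to $r^{d-1}e^{-r^p}$ on $\RR_+$, and the substitution $s=r^p$ turns it into one proportional to $s^{d/p-1}e^{-s}$, that is, $\|X\|_p^p \sim \mathrm{Gamma}(d/p,1)$.

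For the converse, I will assume $\|X\|_p$ and $\theta=\|X\|_p^{-1}X$ are independent with $\theta$ uniform on $\mathbb{S}^{d,p}_+$ and $\|X\|_p^p\sim\mathrm{Gamma}(d/p,1)$; pushing this Gamma law back through $s\mapsto s^{1/p}$ gives that $\|X\|_p$ has density proportional to $r^{d-1}e^{-r^p}$. Then in $(r,\theta)$-coordinates the law of $X$ has density $\kappa\, r^{d-1}e^{-r^p}$ with respect to $dr\otimes d\omega$ for some constant $\kappa>0$, so inverting the polar decomposition yields that $X$ has Lebesgue density $\kappa\, e^{-\|x\|_p^p}I_{\RR^d_+}(x) = \kappa\prod_{k=1}^d e^{-x_k^p}I_{\RR_+}(x_k)$ on $\RR^d_+$. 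This is a product of one-dimensional densities each proportional to $e^{-x^p}I_{\RR_+}$; since $\int_0^\infty e^{-x^p}\,dx = \Gamma(p^{-1})/p$ forces each factor to equal $f_p$ (and hence $\kappa = c_p^d$), the components of $X$ are independent with law $\mathcal{L}_p$.

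The only genuinely non-routine point is the polar decomposition together with the correct choice of reference measure on $\mathbb{S}^{d,p}$: one must use the cone measure, not the Hausdorff surface measure, whenever $p\ne 2$, and it is this choice that makes the ``uniform'' sampling scheme of Definition \ref{p_sphericunif} consistent. Once that is settled, both implications are immediate from the factorization criterion for independence and the one-line $s=r^p$ substitution identifying the radial law with a Gamma distribution.
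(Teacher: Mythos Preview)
Your proof is correct and follows exactly the standard argument from the cited references (Schechtman--Zinn and Barthe et al.): the $\ell_p$-polar change of variables with respect to the cone measure, followed by the factorization criterion and the substitution $s=r^p$. Note, however, that the paper itself does \emph{not} prove this lemma; it is stated with attribution to \cite{Barthe2005,schechtman1990volume} and used as a black box, so there is no ``paper's own proof'' to compare against. One small cosmetic point: you call $\omega$ the ``normalized'' cone measure but then give the unnormalized formula $\omega(A)=d\cdot\mathrm{Leb}_d(\{t\theta:\theta\in A,\ 0\le t\le 1\})$; the argument is unaffected since the normalization constant is absorbed into the radial density, but you may want to make the wording consistent.
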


\noindent For $p=2$, $\mathcal{L}_p$ corresponds to $\vert Z\vert$ where $Z$ is drawn from a $\mathcal{N}(0,1)$ distribution.
The MK distribution function with reference $U_d$ (or $U_{d,2}$) has been obtained in \cite{chernozhukov2015mongekantorovich}[Section 2.4]. 
Here, we use similar arguments to find gradient-of-convex maps for our alternative class of reference measures.
We emphasize that such maps are invariant to shifts, see \cite{ghosal2021multivariate}[Lemma A.7], whereas invariance to rotations requires a spherically symmetric reference measure, \cite{ghosal2021multivariate}[Lemma A.8].

\begin{prop}\label{prop_explicit_Gamma}
Suppose that the components of a random vector $X\in\RR^d_+$ are independent with $\mathcal{L}_p$ distribution for $p>1$, and let $q = p/(p-1)$.
Then, the MK distribution function with respect to $U_{d,q}^+$ has the explicit formulation 
\begin{equation}\label{distribFgamma}
F_q(x) = (\Vert x \Vert_p^{-1} x)^{\otimes (p-1)} G(\Vert x \Vert_p),
\end{equation}
where $\otimes$ stands for the component-wise exponent and $G$ is the univariate distribution function of $\Vert X \Vert_p$. Its inverse, the MK quantile function, is given, for all $u\in\RR^d_+$ such that $\Vert u \Vert_q \leq 1$, by
\begin{equation}\label{quantilQgamma}
Q_q(u) = (\Vert u \Vert_q^{-1} u)^{\otimes (q-1)} G^{-1}(\Vert u \Vert_q).
\end{equation}
The MK superquantile and expected shortfall functions are respectively given,  for all $u\in\RR^d_+$ such that $\Vert u \Vert_q \leq 1$, by
\begin{equation}\label{SQgamma}
S_q(u) = (\Vert u \Vert_q^{-1} u)^{\otimes (q-1)} \overline{S}(\Vert u \Vert_q),
\end{equation}
\begin{equation}\label{ESgamma}
E_q(u) = (\Vert u \Vert_q^{-1} u)^{\otimes (q-1)} \overline{E}(\Vert u \Vert_q),
\end{equation}
where $\overline{S}$ and $\overline{E}$ are the univariate superquantile and expected shortfall functions associated with the distribution of $\Vert X \Vert_p$.
\end{prop}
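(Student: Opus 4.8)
The plan is to exhibit by hand a convex potential whose gradient is the map in \eqref{quantilQgamma}, to verify that this map pushes $U_{d,q}^+$ onto the law $\nu$ of $X$, and then to invoke McCann's uniqueness so that \eqref{quantilQgamma} is \emph{the} MK quantile function; the identities \eqref{distribFgamma}, \eqref{SQgamma} and \eqref{ESgamma} then drop out by direct substitution. Let $G$ be the distribution function of $\Vert X\Vert_p$ and set $h(t)=\int_0^t G^{-1}(s)\,ds$ for $t\in[0,1]$, $h(t)=+\infty$ for $t>1$. Since $G^{-1}$ is nonnegative and nondecreasing on $]0,1[$, the function $h$ is convex, nondecreasing, with $h(0)=0$ and $h(1)=\EE[\Vert X\Vert_p]<+\infty$, because $\Vert X\Vert_p^p$ is Gamma distributed and so $\Vert X\Vert_p$ has finite moments of all orders. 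Setting $\psi(u)=h(\Vert u\Vert_q)$ for $u\in\RR^d_+$ with $\Vert u\Vert_q\le 1$ and $\psi(u)=+\infty$ elsewhere, $\psi$ is convex as a composition of the nondecreasing convex $h$ with the convex map $u\mapsto\Vert u\Vert_q$, and $\psi(0)=0$. A chain-rule computation on $\Vert u\Vert_q=(\sum_k u_k^q)^{1/q}$ gives $\nabla\Vert u\Vert_q=(\Vert u\Vert_q^{-1}u)^{\otimes(q-1)}$ on the interior, so $\nabla\psi(u)=G^{-1}(\Vert u\Vert_q)\,(\Vert u\Vert_q^{-1}u)^{\otimes(q-1)}=Q_q(u)$; at the remaining points of non-differentiability the averaging convention fixed after Definition \ref{p_sphericunif} makes this identity valid everywhere on the punctured $q$-unit ball.

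For the push-forward, I would first record the elementary identities $q-1=(p-1)^{-1}$, $(p-1)(q-1)=1$, $(p-1)q=p$ and $(q-1)p=q$. Draw $u=R\Phi\sim U_{d,q}^+$ with $R$ uniform on $[0,1]$, $\Phi=\Psi^{\otimes(p-1)}$ for $\Psi$ uniform on $\mathbb{S}^{d,p}_+$, independent of $R$. Then $\Vert\Phi\Vert_q^q=\sum_k\Psi_k^{(p-1)q}=\sum_k\Psi_k^p=1$, hence $\Vert u\Vert_q=R$, $\Vert u\Vert_q^{-1}u=\Phi$, and $(\Vert u\Vert_q^{-1}u)^{\otimes(q-1)}=\Psi^{\otimes(p-1)(q-1)}=\Psi$. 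Since $\Vert X\Vert_p^p$ is Gamma distributed, $G$ is continuous and strictly increasing on the support of $\Vert X\Vert_p$, so $G^{-1}(R)$ has the law of $\Vert X\Vert_p$. Therefore $Q_q(u)=\Psi\,G^{-1}(R)$, a vector whose $p$-norm equals $G^{-1}(R)$ (so its $p$-th power is $\Gamma(d/p,1)$-distributed), whose $p$-normalised direction is $\Psi$, uniform on $\mathbb{S}^{d,p}_+$, and these two are independent; Lemma \ref{lemschechtman} then ensures that $Q_q(u)$ has independent $\mathcal{L}_p$ components, that is $Q_{q\#}U_{d,q}^+=\nu$.

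As $U_{d,q}^+$ is absolutely continuous, McCann's theorem \cite{McCannThm}, invoked in Definition \ref{defQ}, makes the gradient-of-convex push-forward unique, so $Q_q$ of \eqref{quantilQgamma} is the MK quantile function. For \eqref{distribFgamma} it is enough to check $F_q(Q_q(u))=u$: with $x=Q_q(u)$ one has $\Vert x\Vert_p^p=G^{-1}(\Vert u\Vert_q)^p\sum_k(\Vert u\Vert_q^{-1}u_k)^{(q-1)p}=G^{-1}(\Vert u\Vert_q)^p$, using $(q-1)p=q$ and $\sum_k(\Vert u\Vert_q^{-1}u_k)^q=1$, so $G(\Vert x\Vert_p)=\Vert u\Vert_q$ and $\Vert x\Vert_p^{-1}x=(\Vert u\Vert_q^{-1}u)^{\otimes(q-1)}$, whence $(\Vert x\Vert_p^{-1}x)^{\otimes(p-1)}G(\Vert x\Vert_p)=(\Vert u\Vert_q^{-1}u)^{\otimes(p-1)(q-1)}\Vert u\Vert_q=u$; equivalently, Hölder's inequality renders $\psi^*(x)$ an explicit function of $\Vert x\Vert_p$ whose gradient is $F_q=\nabla\psi^*$, the MK distribution function of Theorem \ref{regul_Q}. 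Finally, for $u$ in the $q$-unit ball and $t\in[0,1]$ the vector $v_t:=t\,u/\Vert u\Vert_q$ satisfies $\Vert v_t\Vert_q=t$ and shares the $q$-normalised direction of $u$, so $Q_q(v_t)=(\Vert u\Vert_q^{-1}u)^{\otimes(q-1)}G^{-1}(t)$; substituting this into the versions of Definitions \ref{superquantile} and \ref{expectedshortfall} adapted to $\Vert\cdot\Vert_q$ (see Section \ref{class_ref}) and pulling the $t$-independent direction factor out of the integral yields \eqref{SQgamma} and \eqref{ESgamma} with $\overline S(\alpha)=\frac{1}{1-\alpha}\int_\alpha^1 G^{-1}(t)\,dt$ and $\overline E(\alpha)=\frac{1}{\alpha}\int_0^\alpha G^{-1}(t)\,dt$, which by \eqref{EandS_intro_dim1} are precisely the univariate superquantile and expected shortfall of $\Vert X\Vert_p$.

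The main obstacle will be the exponent bookkeeping in the push-forward step: one must verify that the component-wise power $\Psi^{\otimes(p-1)}$ appearing in the definition of $U_{d,q}^+$ combines with the exponent $q-1$ of \eqref{quantilQgamma} so that $(\Vert u\Vert_q^{-1}u)^{\otimes(q-1)}$ is again uniform on $\mathbb{S}^{d,p}_+$, which is exactly what makes Lemma \ref{lemschechtman} applicable. Checking that $\psi$ meets all requirements of Definition \ref{defQ} — convexity via the composition argument, $\psi(0)=0$, finiteness at the boundary from the integrability of $\Vert X\Vert_p$, and that a.e.\ differentiability together with the averaging convention make $\nabla\psi=Q_q$ on the whole punctured $q$-unit ball — is the other delicate point, though each ingredient is standard.
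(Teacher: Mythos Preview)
Your proof is correct and follows essentially the same strategy as the paper, only from the dual side: the paper exhibits $F_q$ as the gradient of the convex potential $\varphi(x)=\int_{-\infty}^{\Vert x\Vert_p} G(t)\,dt$ and checks $F_{q\#}\nu=U_{d,q}^+$ via Lemma \ref{lemschechtman}, then inverts to get $Q_q$, whereas you build $Q_q=\nabla\psi$ with $\psi(u)=\int_0^{\Vert u\Vert_q}G^{-1}(s)\,ds$ and verify $Q_{q\#}U_{d,q}^+=\nu$ directly before inverting to recover $F_q$. Both routes rest on the same ingredients (composition of a convex norm with a one-dimensional primitive, Lemma \ref{lemschechtman}, McCann uniqueness, then direct substitution for $S_q$ and $E_q$); your direction has the mild advantage of matching Definition \ref{defQ} literally, while the paper's has a slightly simpler potential.
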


\begin{proof}
Denote by $G$ the probability distribution function of $\Vert X \Vert_p$. Moreover, for $z\in \RR$, let 
$$
\Psi(z) = \int_{-\infty}^z G(t) dt,
$$
and $\varphi(x) = \Psi(\Vert x \Vert_p)$. 
Then, $\varphi$ is convex by the composition between the non-decreasing function $\Psi$ and $ \Vert \cdot \Vert_p$, both convex, see \cite{Rockafellar2013}[Theorem 5.1].
In addition, $\nabla \varphi(x) = F_q(x)$, which means that $F_q$ is the gradient of a convex function.
It remains to show that $F_q(X)$ follows the distribution $U_{d,q}^+$.
From Lemma \ref{lemschechtman}, $F_q(X)$ is the product of two independent random variables, $G(\Vert X \Vert_p)$ and $(\Vert X \Vert_p^{-1} X)^{\otimes (p-1)}$.
On the one hand, $G(\Vert X \Vert_p)$ is uniformly distributed on $[0,1]$, by definition of $G$. 
On the other hand, the distribution of $\Vert X \Vert_p^{-1} X$ is uniform on $\mathbb{S}^{d,p}_+$, which implies \eqref{distribFgamma}.
In order to compute $Q_q=F_q^{-1}$, it remains to invert \eqref{distribFgamma}.
If $u = (\Vert x \Vert_p^{-1} x)^{\otimes (p-1)} G(\Vert x \Vert_p)$, 
$$
\Vert u \Vert_q^q = G(\Vert x \Vert_p)^q \sum_{k=1}^d \Big(\frac{x_k}{\Vert x \Vert_p}\Big)^{p}.
$$
Thus, $\Vert u \Vert_q = G(\Vert x \Vert_p)$ which yields $\Vert u \Vert_q^{-1} u = (\Vert x \Vert_p^{-1} x)^{\otimes (p-1)}$. 
This rewrites $\Vert u \Vert_q^{-1} u \Vert x \Vert_p^{p-1} = x^{\otimes (p-1)} $, where $\Vert x \Vert_p^{p-1} = G^{-(p-1)}(\Vert u \Vert_q)$, so
$$
\Vert u \Vert_q^{-1} u G^{-(p-1)}(\Vert u \Vert_q) = x^{\otimes (p-1)}.
$$
Finally, \eqref{quantilQgamma} follows by applying the exponent $q-1= 1/(p-1)$, which implies \eqref{SQgamma} and \eqref{ESgamma}, completing the proof of Proposition \ref{prop_explicit_Gamma}.

\qedhere
\end{proof}

\begin{rem}
It is well-known that in the special case $p=2$, the distribution of $(\Vert X \Vert_p^{-1} X)^{\otimes (p-1)}$ is uniform on $\mathbb{S}^{d,q}_+$ where $q=p=2$, see \cite{chernozhukov2015mongekantorovich}.
One may wonder if this property holds true for other choices of $p>1$.
This is actually not the case as we shall see now.
As $q$ is the Holder conjugate of $p$, we clearly have
$\Vert X \Vert_p^{p-1} = \Vert X^{\otimes (p-1)} \Vert_q$ which implies that
\begin{equation}\label{lemschechtman}
\left( \frac{ X }{ \Vert X \Vert_p} \right)^{\otimes (p-1)} = \frac{ X^{\otimes (p-1)} }{\Vert X^{\otimes (p-1)} \Vert_q}.
\end{equation}
All the components of $X$ share the same $\mathcal{L}_p$ distribution. Consequently, each  
$X_i^p$ has a Gamma distribution with shape and scale parameters $1/p$ and $1$.
It implies that $X_i^{p-1}=X_i^{p/q}$ is the power $1/q$ of a Gamma distribution.
We immediately deduce from Lemma \ref{lemschechtman} that, as soon as $p \neq 2$, $(\Vert X \Vert_p^{-1} X)^{\otimes (p-1)}$ is not uniformly distributed on $\mathbb{S}^{d,q}_+$.
\end{rem}


Figure \ref{fig:GammaDistributions} illustrates Proposition \ref{prop_explicit_Gamma}. 
Each column contains the reference measure $U_{d,q}^+$ for $q = p/(p-1)$ in the first line, and the associated distribution with $i.i.d.$ components $X_i \sim \mathcal{L}_p$ below. 
Reference contours of orders $0.25, 0.5, 0.75$ are represented as well as the explicit MK quantile contours. 
Whereas the center-outward ordering intrinsic to $U_d$ is natural for elliptical models, \cite{chernozhukov2015mongekantorovich}[Section 2.4],
the left-to-right ordering may be more relevant under generalized Gamma models in $\RR^d_+$.

\begin{figure}[h]
\centering
\subfigure[$U_{d,q}^+$, $q=3$]{\includegraphics[width=1.55in,height=1.55in]{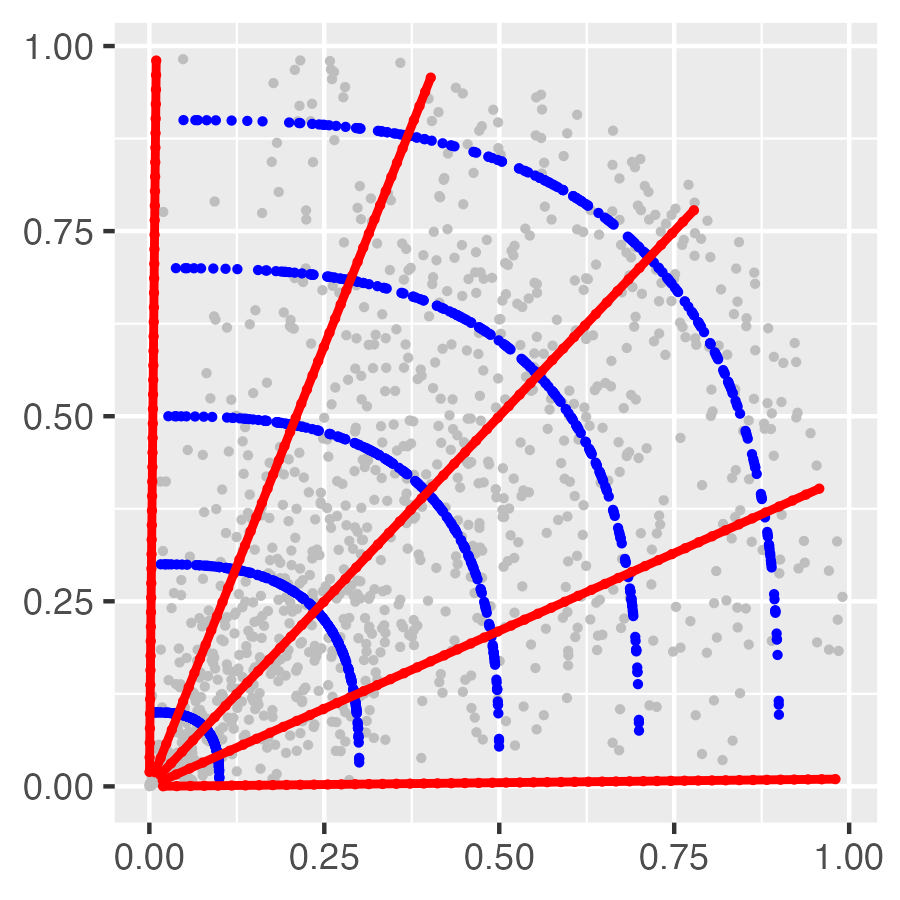}}
\subfigure[$U_{d,q}^+$, $q=2$]{\includegraphics[width=1.55in,height=1.55in]{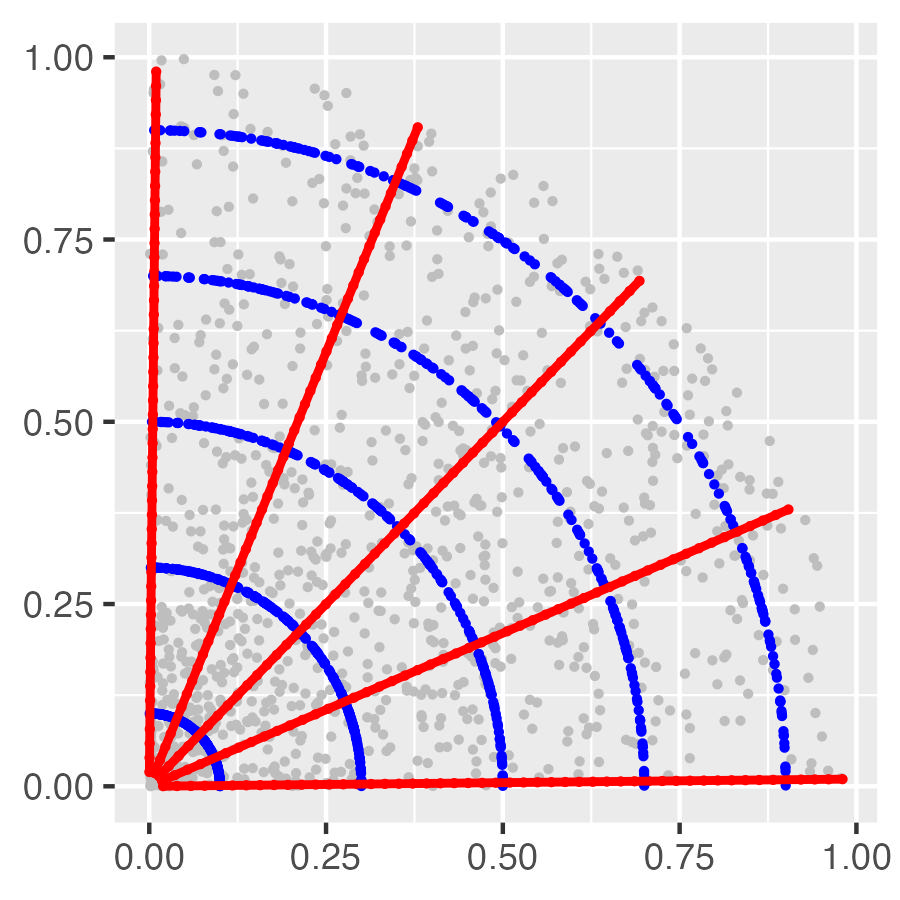}}
\subfigure[$U_{d,q}^+$, $q=5/4$]{\includegraphics[width=1.55in,height=1.55in]{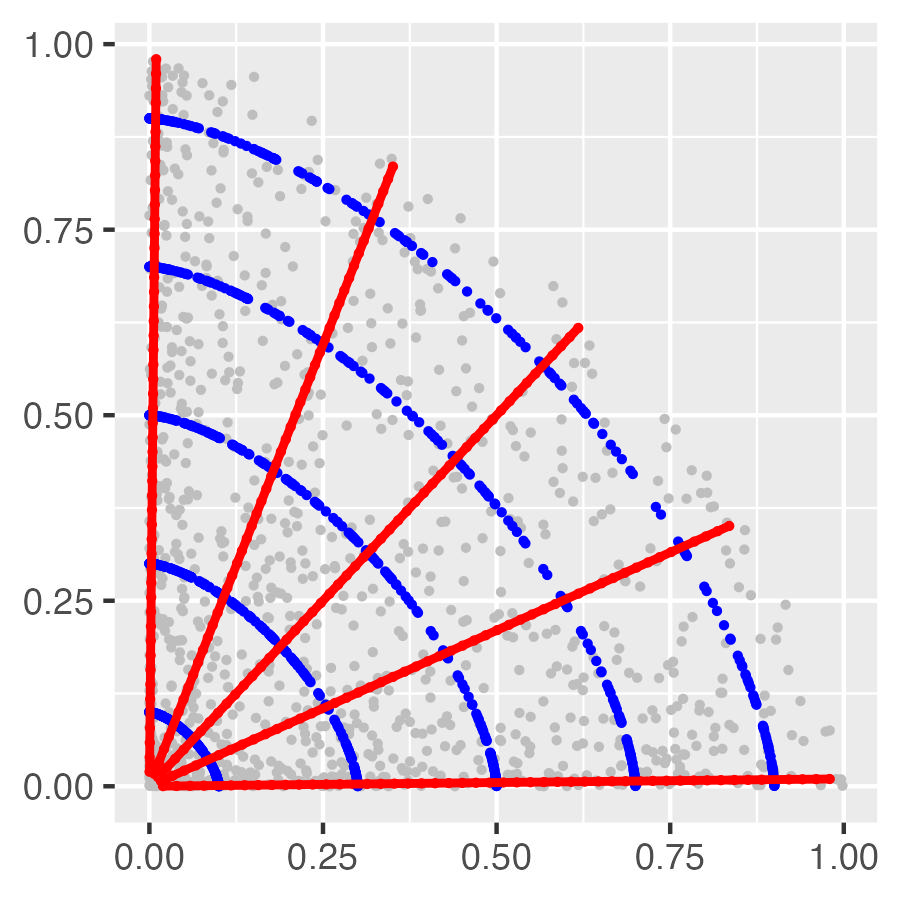}}

\subfigure[$p=1.5$]{\includegraphics[width=1.55in,height=1.55in]{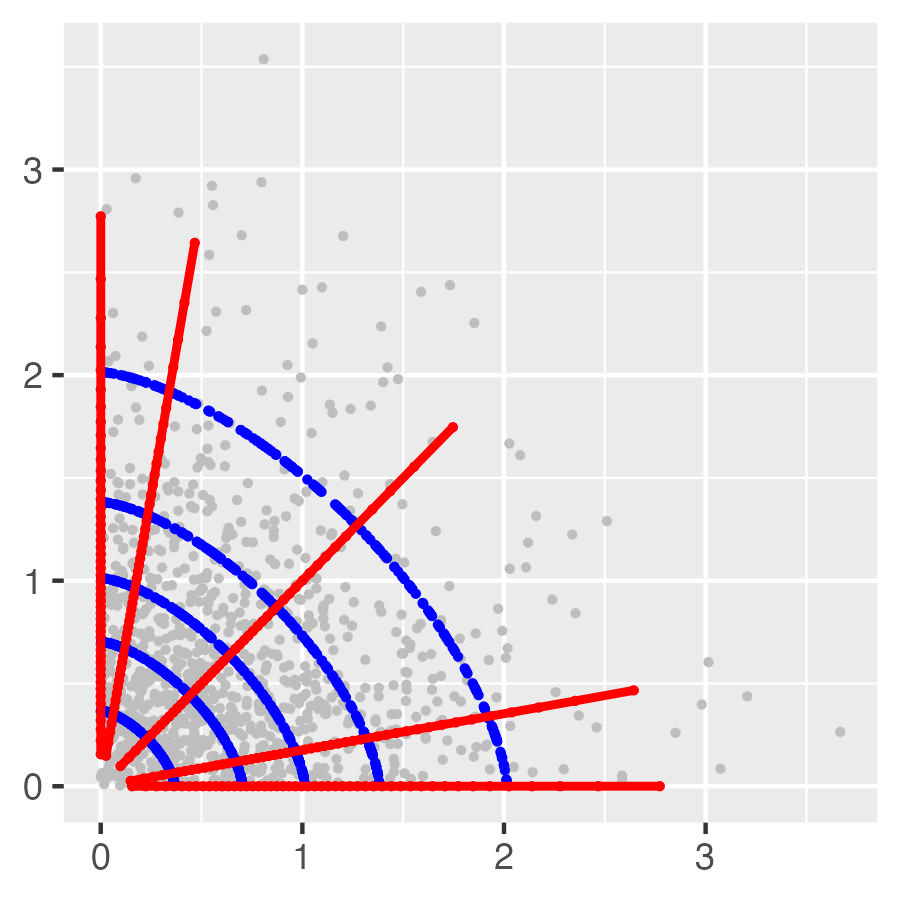}}
\subfigure[$p=2$]{\includegraphics[width=1.55in,height=1.55in]{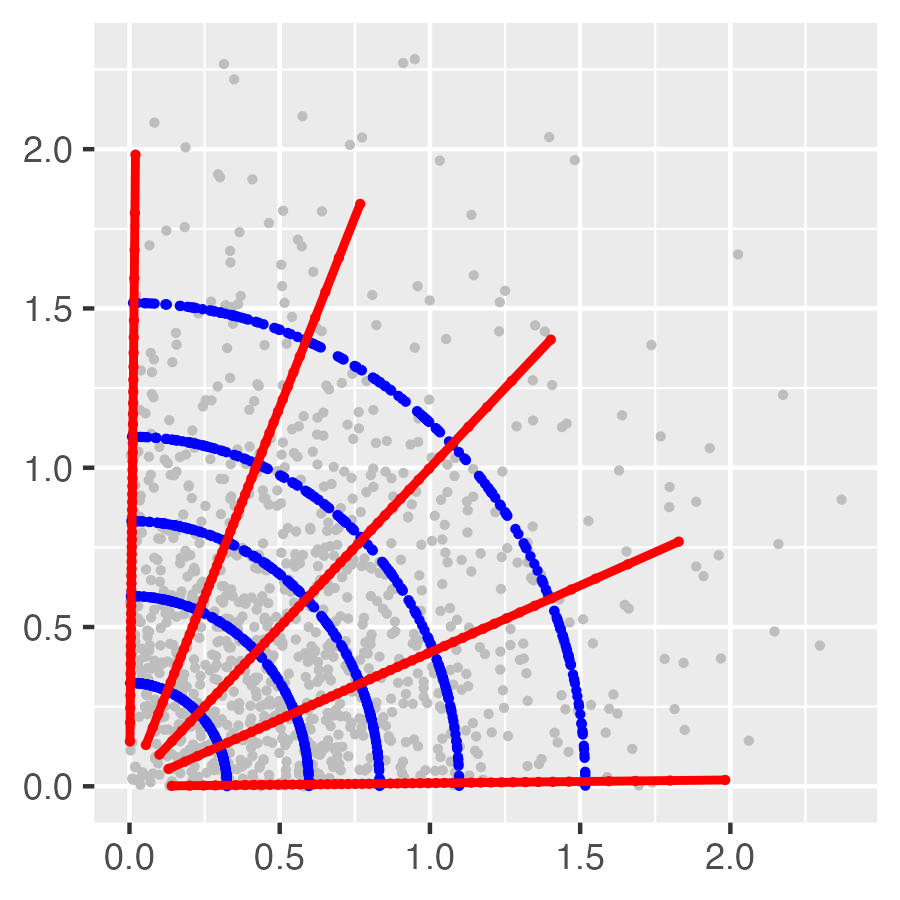}}
\subfigure[$p=5$]{\includegraphics[width=1.55in,height=1.55in]{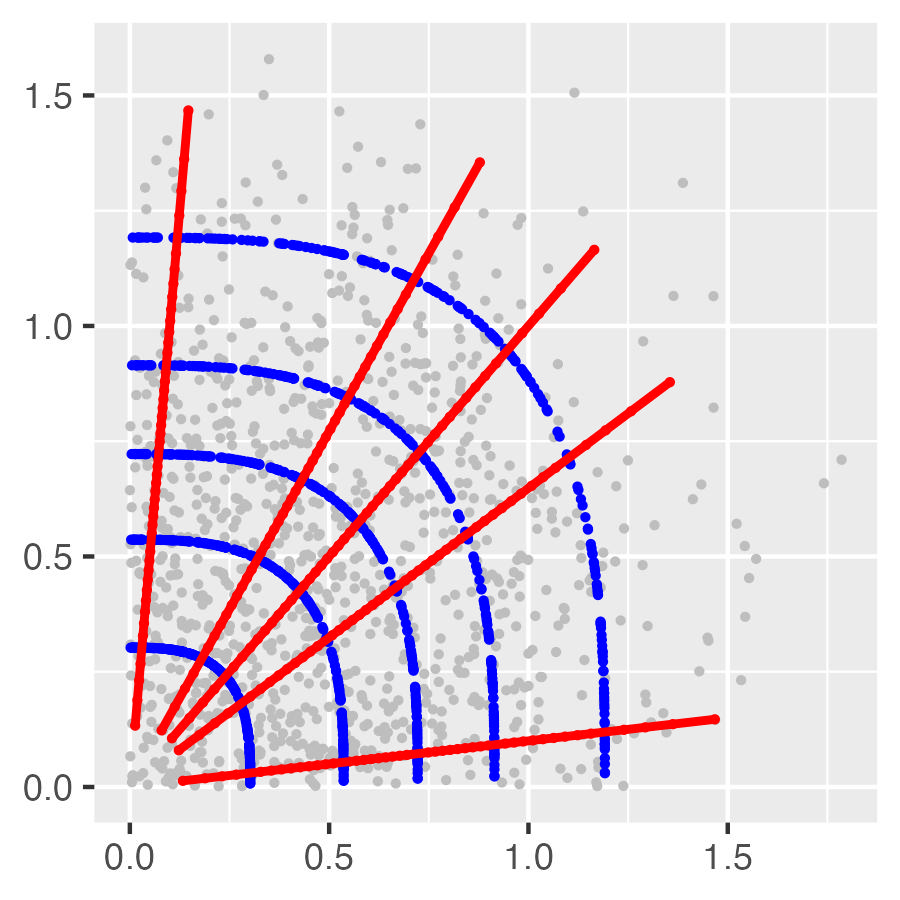}}

\caption{(First line) Reference measures $U_{d,q}^+$ for indices $q$ conjugate with $p$. (Second line) Generalized Gamma distributions $\mathcal{L}_p$ and explicit quantile contours and sign curves. }
\label{fig:GammaDistributions}
\end{figure}

\section{Multivariate values and vectors at risk}\label{sec:VVrisk}

Considering several univariate risks separately leads to neglecting the correlation structure between the components of a  random vector. Hence, studying a notion of risk in a multivariate way is of major importance.
We refer to \cite{Ekeland_2010}[Section 2.1] for motivations for intrinsically multivariate risk analysis problems.
The risk framework considers a vector of dependent losses $X\in\RR^d_+$, where each component is a positive measure whose unit may be different from one to another. Naturally, the larger the value of a component, the greater the associated risk.  
On the one hand, we argue that real-valued risk measures are not sufficient in such setting.
With real-valued random variables, real-valued risk measures are able to catch all the information needed. But, with the increase of the dimension, one may need vectors in $\RR^d$ to get the directional information of the multivariate tails. 
With this in mind, we make here a proposal for vector-valued risk measures, namely \textit{Vectors-at-Risk} and \textit{Conditional-Vectors-at-Risk}, which aim to sum up the relevant information contained in the center-outward quantiles and superquantiles.
Related to these vector-valued measures, we define \textit{multivariate values-at-risk} and \textit{multivariate conditional-values-at-risk} which are measures of the form of $\rho : \RR^d_+ \rightarrow \RR$ able to compare $X$ and $Y$ by $\rho(X)>\rho(Y)$. 
Importantly, the computation of these real-valued measures gives the vector-valued ones, hitting two targets with one shot without added complexity.
On the other hand, a multivariate extension of the concepts of VaR and CVaR shall have the same interpretation {  as} in $\RR$.
In dimension $d=1$, the VaR of some risk at a level $\alpha$ is simply the quantile of order $\alpha$. Accordingly, the CVaR is the superquantile of order $\alpha$. 
These are to be understood respectively as the worst risk encountered with $\nu$-probability $\alpha$, and as the averaged risk beyond this quantile. 
Being an observation from the underlying distribution, it shall be vector-valued in our multivariate framework.

By construction, the MK quantile contour of order $\alpha \in [0,1]$ contains the points having the most outward position with $\nu$-probability $\alpha$. 
Hence, we argue that a Vector-at-Risk of order $\alpha$ should belong to this contour. 
Even if it is always adapted to the geometry of $\nu$, a MK quantile contour can either describe a \textit{central} or a \textit{bottom-left} area, by changing the reference measure.
We explore both these possibilities, leaving it to the reader to decide which tool to adopt.
In what follows, we rely on $U_d$ and $U_d^+$, its restriction to $\RR^d_+$.
Of course, other $p$-spherical uniform distributions could be used, but we do not know if this would induce any benefit. 
Starting from the choice between \textit{central} or \textit{bottom-left} areas for MK quantile contours, the worst vectors of losses are the furthest from the origin in our context. Then, we suggest to select points from the center-outward quantile contour of order $\alpha$ with maximal norm, using Definition \ref{def:MKquantile}. 
\textit{Conditional-Vectors-at-Risk} (CVaRs) are defined in the same way, but considering the center-outward superquantiles, with Definition \ref{superq_contours}.

\begin{definition}\label{VaRsCVaRs}
The Vector-at-Risk at level $\alpha$ is defined by 
$$
\mbox{VaR}_{\alpha} (X) \in \argsupl \left\{ \; \Vert X \Vert_1 \; ; \; X \in \mathcal{C}_\alpha \; \right\}.
$$
Similarly, the Conditional-Vector-at-Risk is defined by 
$$
\mbox{CVaR}_{\alpha} (X) \in \argsupl \left\{ \; \Vert X \Vert_1 \; ; \; X \in \mathcal{C}_\alpha^s \; \right\}.
$$
\end{definition}

\noindent The choice of $\Vert \cdot \Vert_1$ in the above definition depends on how to compare different risks. 
We emphasize that this choice is very distinct to the one of the reference measure : this consideration takes place once the contours are fixed.
Without added information, we believe that two observations of same $1$-norm shall be considered of same importance. Intuitively, when comparing multivariate risks, the risks per component add up. For example, if 
\begin{equation*} 
x_1 = 
   \begin{pmatrix} 
         1 \\ 0
   \end{pmatrix} 
  \quad \text{and} \quad x_2 =
      \begin{pmatrix} 
         0.5 \\ 0.5
   \end{pmatrix} 
\end{equation*} 
belong to the same quantile contour of level $\alpha$, choosing $\Vert \cdot \Vert_2$ instead of $\Vert \cdot \Vert_1$ in Definition \ref{VaRsCVaRs} would induce to consider that $x_1$ is worst than $x_2$. 
This choice is coherent with the common practice of computing univariate VaR and CVaR on the random variable $\Vert X \Vert_1$ in financial applications, but a major difference is that it encodes the multivariate joint probability of the vector $X$ before applying the sum. Thus, it takes into account the correlations, while providing more information, because typical values for component-wise risks can be retrieved through our Vectors-at-Risk.
With this in mind, the Vector-at-Risk of order $\alpha$ is to be understood as the ``worst'' risk encountered with probability $\alpha$, whereas the Conditional-Vector-at-Risk is the average risk beyond this ``worst'' observation, where the notion of ``worst'' is characterized here by $\Vert \cdot \Vert_1$ and the focus is either on central or on bottom-left areas. Thus, this generalizes the understanding of univariate VaR and CVaR. 
We displayed in Figure \ref{fig:VaRCVaR_with_contours} the $\mbox{VaR}_{\alpha}$ and $\mbox{CVaR}_{\alpha}$ with quantile and superquantile contours estimated via the numerical scheme presented in Section \ref{choice_entropic}  for $\ee=10^{-3}$. Each red point is the worst observation inside the corresponding quantile or superquantile regions.

\begin{figure}[htbp]
\centering
\subfigure[Center-outward reference $U_d$]{\includegraphics[width=2in,height=2in]{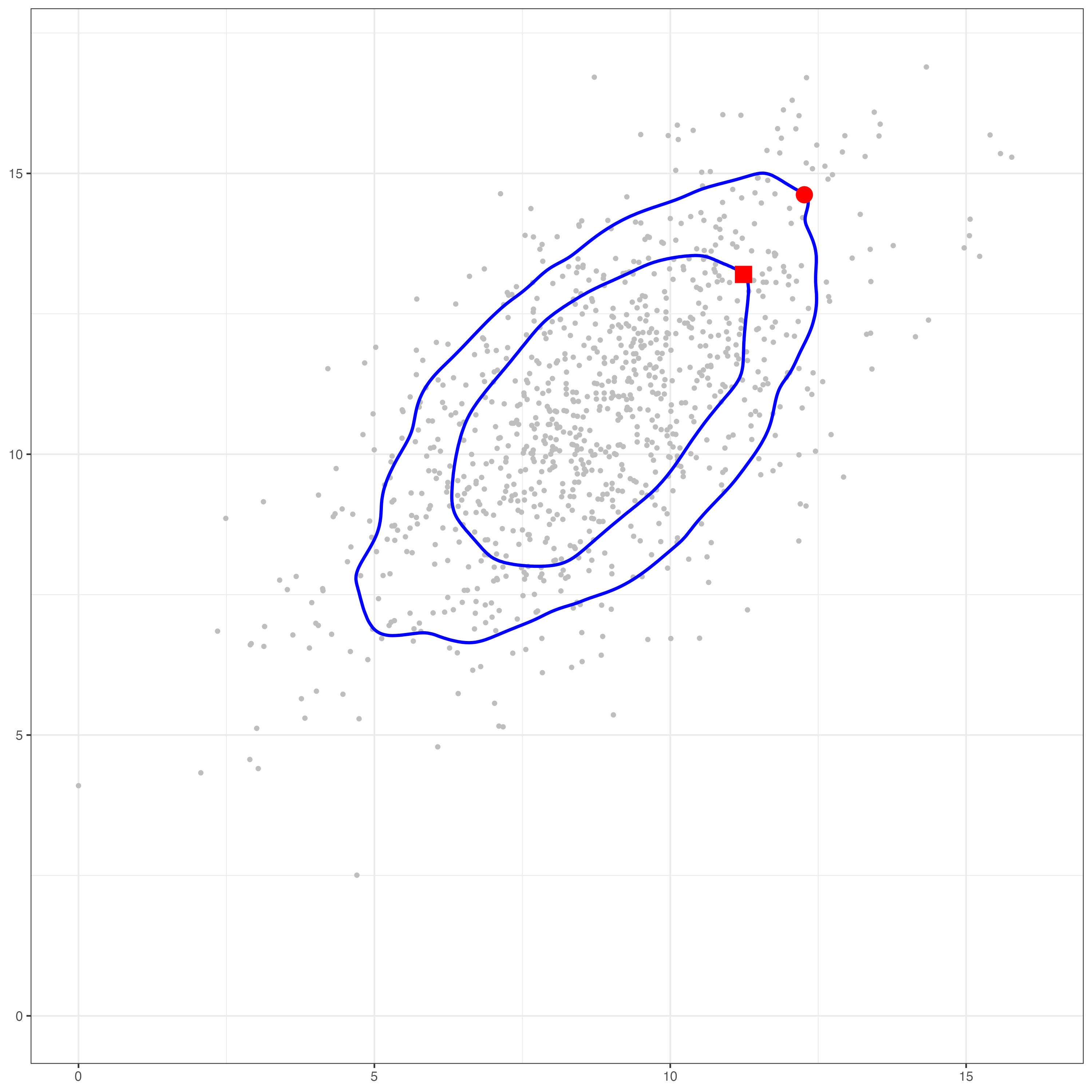}}
\subfigure[Left-to-right reference $U_d^+$]{\includegraphics[width=2in,height=2in]{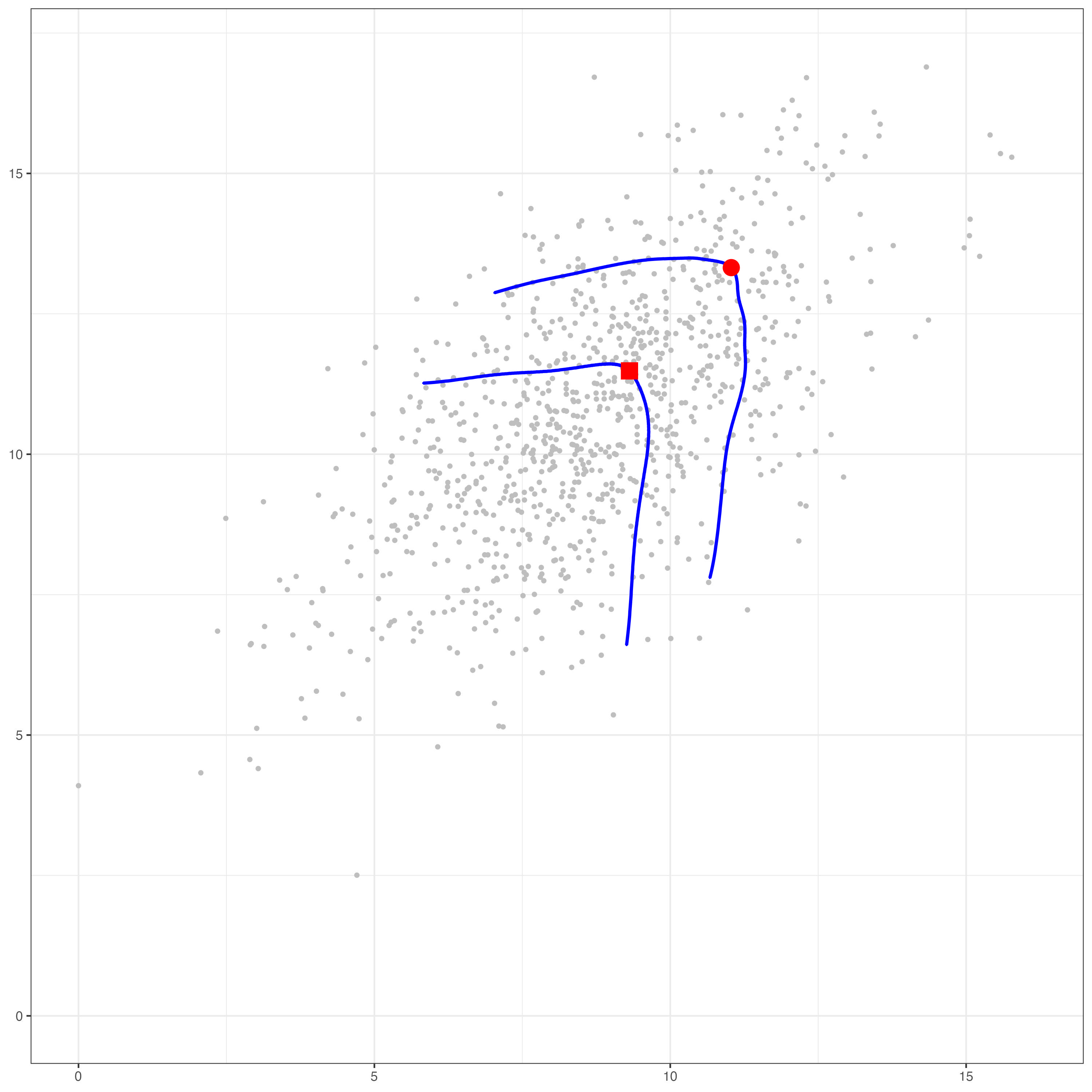}}

\caption{ {  Gaussian distribution with $\mbox{VaR}_{\alpha}$ and $\mbox{CVaR}_{\alpha}$ in red and associated MK quantile and superquantile contours in blue, for $\alpha = 0.5$.} }
\label{fig:VaRCVaR_with_contours}
\end{figure}

Intuitively, Definition \ref{VaRsCVaRs} already gives a real-valued risk measure. Note that the set $\left\{ \; \Vert X \Vert_1 \; ; \; X \in \mathcal{C}_\alpha \; \right\}$ is the same as $\left\{ \; \Vert Q_X(u) \Vert_1 \; ; \; \Vert u \Vert_2= \alpha \; \right\}$ {  for both the reference distribution $U_d$ and its restriction $U_d^+$ to $\RR^d_+$.}
\begin{definition}\label{rhoVaRsCVaRs}
The multivariate value-at-risk at level $\alpha$ is defined as 
$$
\rho_{\alpha}^Q (X) = \supl_u \left\{ \; \Vert Q_X(u) \Vert_1 \; ; \; \Vert u \Vert_2 = \alpha \; \right\}.
$$
Similarly, the multivariate conditional value-at-risk is defined as 
$$
\rho_{\alpha}^S (X) = \supl_u \left\{ \; \Vert S_X(u) \Vert_1 \; ; \; \Vert u \Vert_2 = \alpha \; \right\}.
$$
\end{definition}

These \textit{multivariate (conditional) values-at-risk} indicate on the spreadness of the point cloud drawn from a vector of losses, and contain information about its centrality and the correlations between its components.
Empirical experiments have been conducted in Section \ref{sec:emp} where they are compared with the \textit{maximal correlation risk measure} from \cite{beirlant2019centeroutward}.

\section{Numerical experiments.} \label{sec:numexp}

\subsection{On the choice of the empirical quantile map.}\label{choice_entropic}

Hereafter, we chose to use the entropic map as a regularized empirical map, as advocated in \cite{BBT2023,Carlier:2022wq,Masud2021}. 
Regularizing the quantile function enables to reach a smooth function $T:u \mapsto T(u)$ in practice, 
and has been the concern of several works, including \cite{beirlant2019centeroutward,Hallin-AOS_2021}.
Our regularization choice is thus motivated by 
the known computational advantages of entropic optimal transport, \cite{cuturi2013sinkhorn,aude2016stochastic}, with a very recent line of work focusing on the entropic map \cite{goldfeld2022limit,Pool2023,pooladian2021entropic,RigolletStromme2022}. 
A relaxed version of Monge problem \eqref{MongeOT} is the Kantorovich problem, see the introduction of \cite{Villani}. 
This allows for easier computations, even more so when one uses entropically regularized optimal transport. We only state here the semi-dual formulation of Kantorovich regularized problem, that one can find for instance in \cite{aude2016stochastic}. 
The notation differ from traditional ones in optimal transport theory, to be consistent with the context of center-outward quantiles. 
Let $\mathcal C(\XX)$ be the space of continuous functions from $\XX$ to $\RR$. Let $\ee>0$ be the regularization parameter, which must be low in order to approximate better the true OT. For $\mu,\nu \in \mathcal{P}_2(\RR^d)$ with finite second-order moments and with respective supports $\mathcal{U},\XX$, and for a given $x_0\in \XX$, the semi-dual problem has a unique solution and writes
\begin{equation}
\max_{v \in \mathcal{C}(\mathcal{X}) : v(x_0)=0} \int_\mathcal{U} v^{c,\epsilon}(u) d\mu(u) + \int_\mathcal{X} v(x) d\nu(x) - \epsilon, 
\label{Se}
\end{equation}
where $v^{c,\epsilon} \in \mathcal C(\mathcal{U})$ is the smooth c-transform of $v$ 
\begin{equation}\label{def_c_transf}
 \forall u \in \mathcal{U}, v^{c,\epsilon}(u) = -\epsilon \log \Big( \int_{\XX} \exp \Big( \frac{v(x) -\frac{1}{2}\Vert u-x\Vert^2}{\epsilon} \Big) d\nu(x) \Big) .
\end{equation}
In practice, $\nu$, the law {  whose} we consider the quantiles, is not known and is approached by its empirical counterpart, so it always has a finite moment of order $2$. On the contrary, $\mu$, the reference law, is known and absolutely continuous. 
In this semi-discrete setting, one can use stochastic algorithms \cite{bercu2020asymptotic,BBT2023,aude2016stochastic} to solve \eqref{Se} and obtain the optimal Kantorovich potential $v$. 
From this, one can deduce the entropic map, \cite{pooladian2021entropic}, an approximation of the Monge map partially legitimated by an entropic analog of Brenier's theorem, \cite{Brenier1991PolarFA,Cuesta1989NotesOT} 
$$
Q_\ee(u) = \nabla \Big( \frac{1}{2}\Vert u \Vert^2 - v^{c,\epsilon}(u) \Big).
$$
Additionally, neither Assumption \ref{hypA} nor Assumption \ref{hypB} are necessary to its definition or to ensure its continuity. 
Using \eqref{def_c_transf}, one has the analytic formula 
\begin{equation}
Q_{\epsilon}(u) = \int_{\XX} x g_\epsilon(u,x) d\nu(x),
\label{Def_Qe}
\end{equation}
where
\begin{equation*}\label{def_g_e}
g_\epsilon(u,x) = \frac{ \exp \left( \frac{ v_\epsilon(x) - \frac{1}{2}\Vert u-x \Vert^2 }{\epsilon} \right)} { \int_{\XX} \exp \left( \frac{ v_\epsilon(z) - \frac{1}{2}\Vert u-z \Vert^2 }{\epsilon}  \right) d\nu(z) } = \exp \Big( \frac{v_\epsilon^{c,\epsilon}(u) + v_\epsilon(x) - \frac{1}{2}\Vert u-x \Vert^2}{\epsilon} \Big).
\end{equation*}
This formula can be read as a conditional expectation of $\nu$ w.r.t.\ the conditional law associated with the transport plan $d\pi_\ee(u,x)=g_\epsilon(u,x) d\nu(x)d\mu(u)$, which appears to be solution of the Kantorovich regularized problem, \cite{aude2016stochastic}.
Plugging $Q_\ee$ into Definitions \ref{superquantile} and \ref{expectedshortfall} induces the entropic analogs
\begin{equation}\label{Def_Se}
S_{\epsilon}(u) = \frac{1}{1-\Vert u \Vert} \int_{\Vert u \Vert}^1 Q_\ee\Big(t\frac{u}{\Vert u \Vert}\Big) dt 
\end{equation}
and
\begin{equation}
E_{\epsilon}(u) = \frac{1}{\Vert u \Vert} \int_0^{\Vert u \Vert} Q_\ee\Big(t\frac{u}{\Vert u \Vert}\Big) dt.
\end{equation}

The empirical counterparts $\widehat{Q}_{\ee,n}$, $ \widehat{S}_{\ee,n}$ and $ \widehat{E}_{\ee,n}$ of definitions \eqref{Def_Qe} and \eqref{Def_Se} are obtained by plug-in estimators of the problem \eqref{Se} for the empirical measure $\widehat{\nu}_n = \frac{1}{n} \sum_{i=1}^n \delta_{X_i}$ and {  for the reference distribution $\mu$}.
In the experiments of the next section, we use the stochastic Robbins-Monro algorithm taken from \cite{bercu2020asymptotic}. 
The entropic map $\widehat{Q}_{\ee,n}$ defined in \eqref{Def_Qe} is considered as a regularized empirical quantile function with $\ee = 10^{-3}$, and the integrals in \eqref{Def_Se} are estimated by Riemann sums. 
{   Because the} entropic map is continuous and it is the gradient of a convex function, even empirically, it shall provide nested and smooth contours. 
Moreover, estimating Monge maps is known to suffer from the curse of dimensionality, see \cite{hutter2020minimax,pooladian2021entropic}. 
But, when the objective is the entropic map, for any $\ee>0$, convergence rates independent from $d$ were obtained in \cite{RigolletStromme2022}. 
Furthermore, the empirical version of the entropic map approaches the true Monge map $(\ee = 0)$ with a rate independent from $d$ if $\nu$ is assumed to be discrete, see \cite{Pool2023}. 
Note that, even for low $\ee$, the entropic map does not exactly push-forward {  the reference distribution} onto $\nu$, and is rather an approximation of it.

\subsection{Empirical study on simulated data}\label{sec:emp}
\subsubsection{Descriptive plots for quantiles and superquantiles}



Descriptive plots associated with Definition \ref{superq_contours}
are given in Figure \ref{fig:descriptiveplots}, to describe all the information contained in our new {  concepts}.
The continuous reference distribution $U_d$ is transported to the discrete banana-shaped measure $\nu$ with support of size $n = 5000$, via our empirical center-outward expected shortfall (resp.\ superquantile) function. 
Note how the data splits into a central area and a periphery area by the range of points covered by both maps. 
These are satisfying estimators for the well-suited $\ee = 10^{-3}$, that is to say the first column of Figure \ref{fig:descriptiveplots}.
As the regularization parameter $\ee$ for $E_\ee$ and $S_\ee$ grows, the contours concentrate around the mean vector of $\nu$, which is a known feature of entropic optimal transport. 
One can observe that our regularized approach yields smooth interpolation between image points. For visualization purposes, the red points of averaged sign curves are linked by straight paths. The blue points are not, to illustrate how the contours capture the empty space in the middle of the non convex point cloud.

\begin{figure}[h]
\centering
\subfigure[Expected shortfalls, $\ee = 0.001$]{\includegraphics[width=1.5in,height=1.5in]{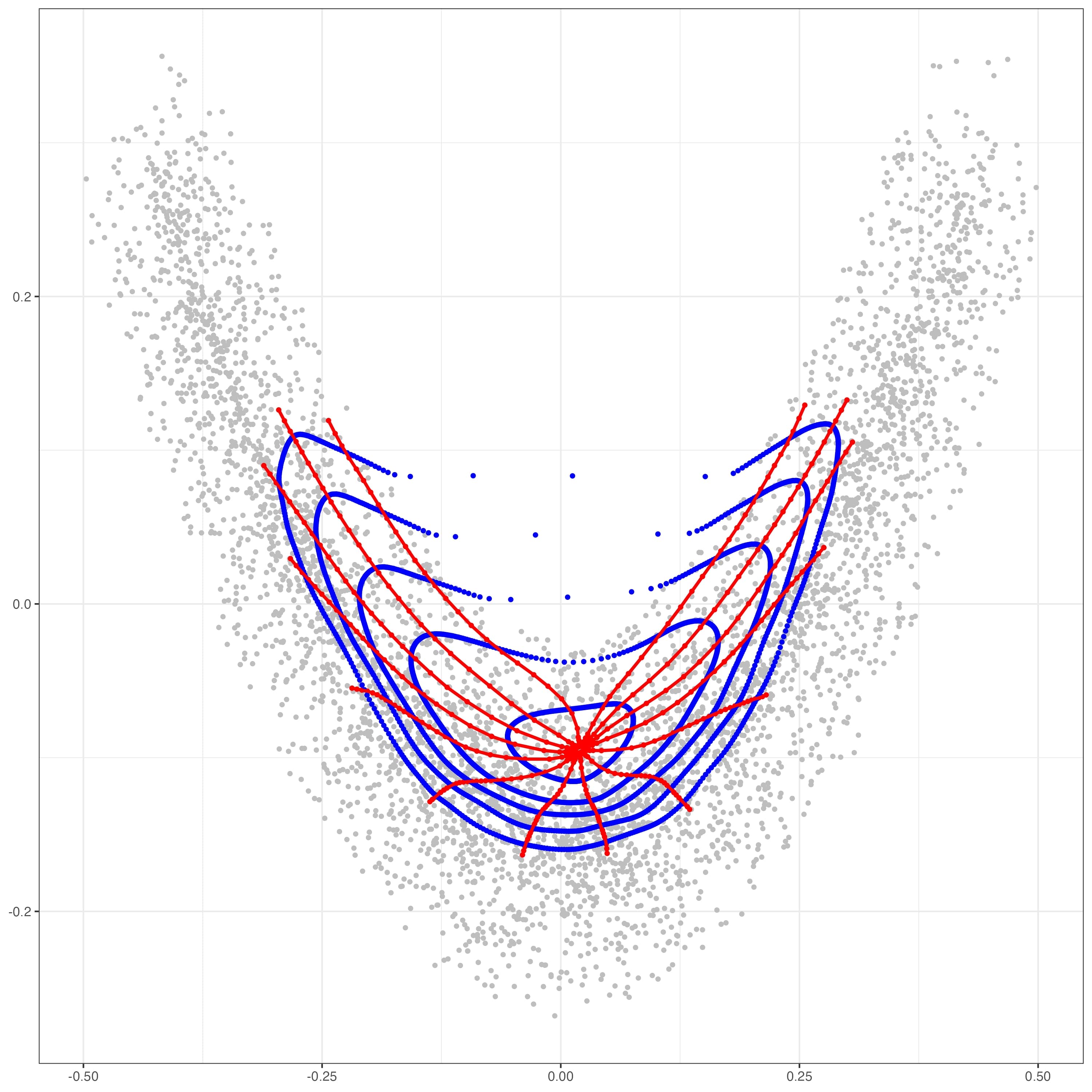}}
\subfigure[Expected shortfalls, $\ee = 0.005$]{\includegraphics[width=1.5in,height=1.5in]{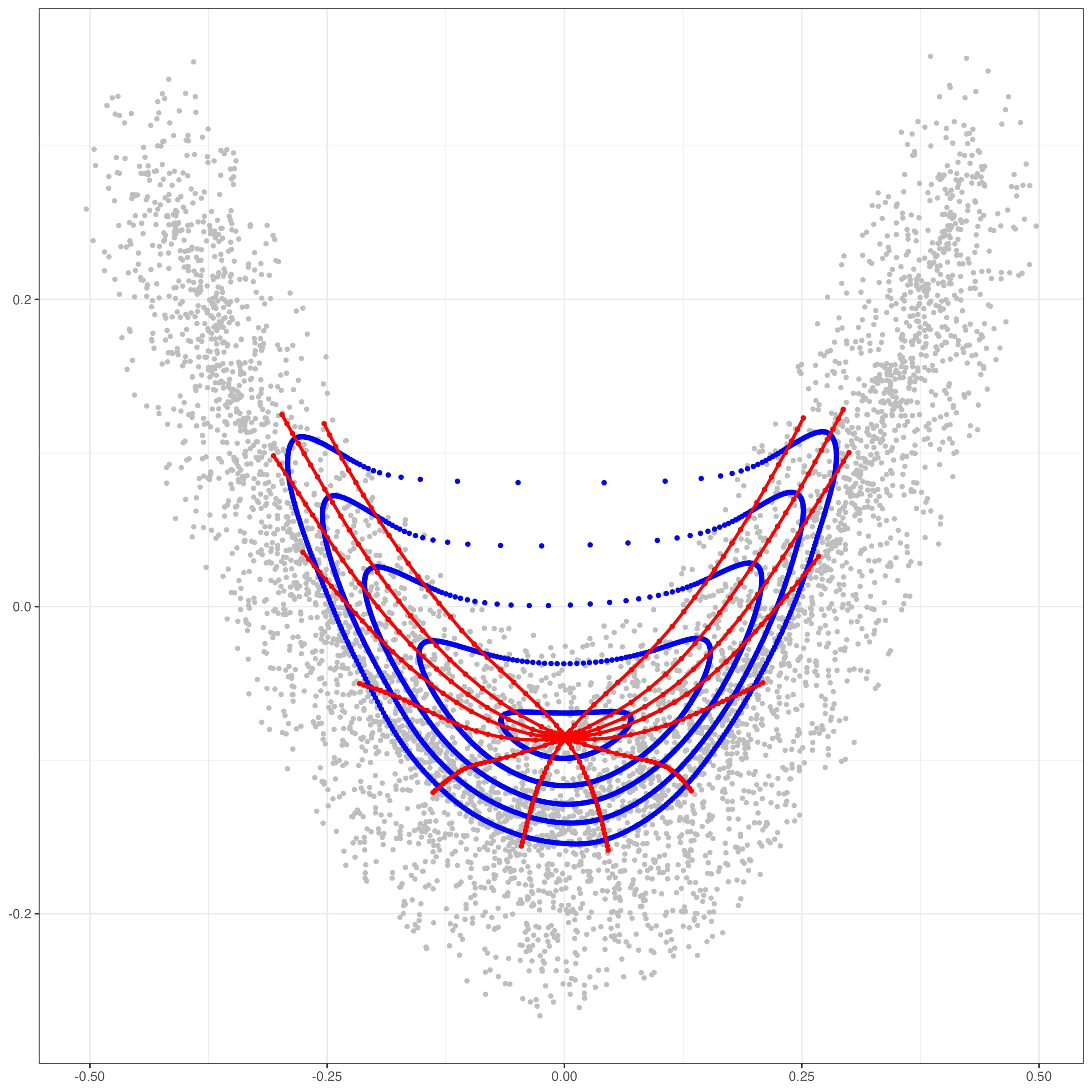}} 
\subfigure[Expected shortfalls, $\ee = 0.01$]{\includegraphics[width=1.5in,height=1.5in]{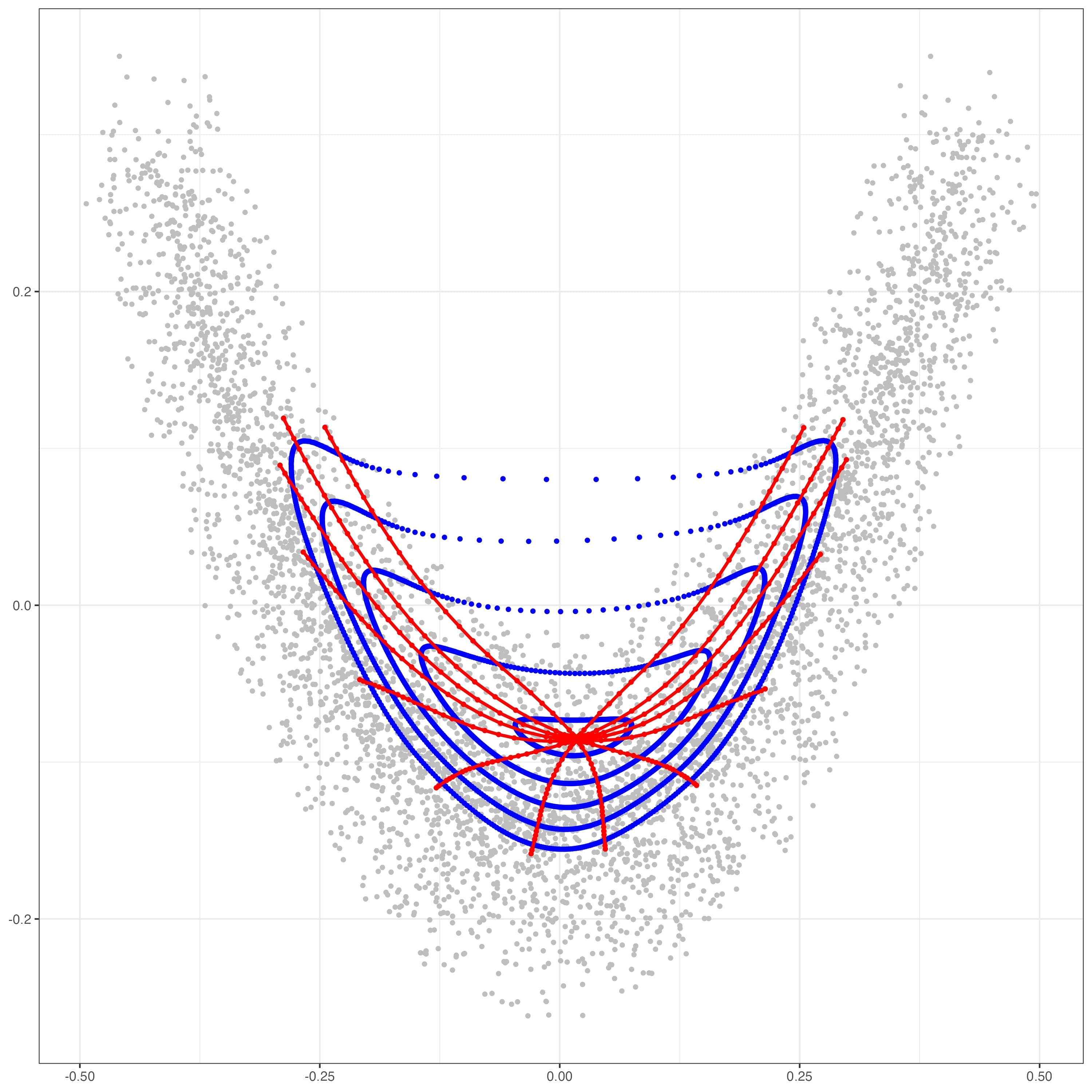}}

\subfigure[Superquantiles, $\ee = 0.001$]{\includegraphics[width=1.5in,height=1.5in]{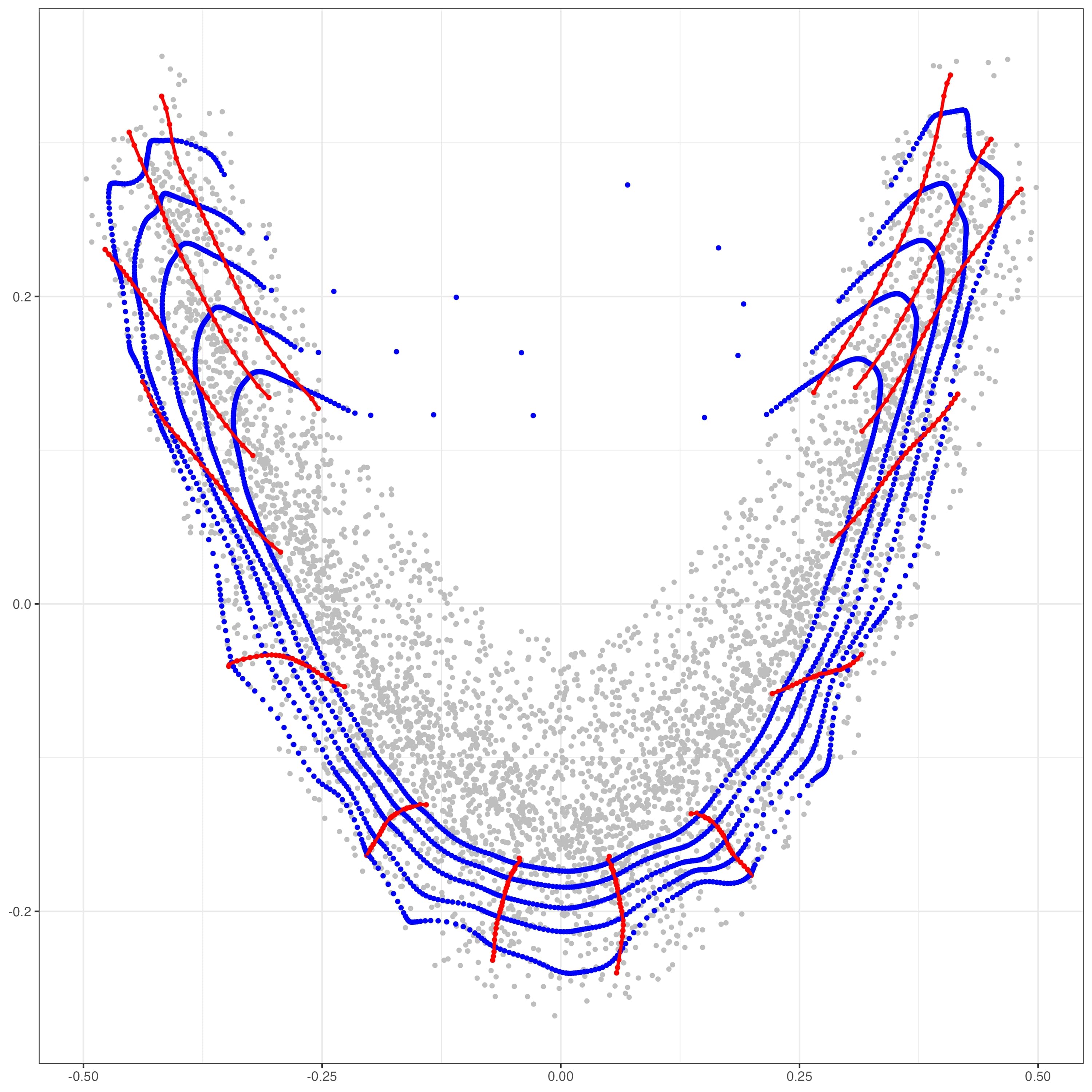}}
\subfigure[Superquantiles, $\ee = 0.005$]{\includegraphics[width=1.5in,height=1.5in]{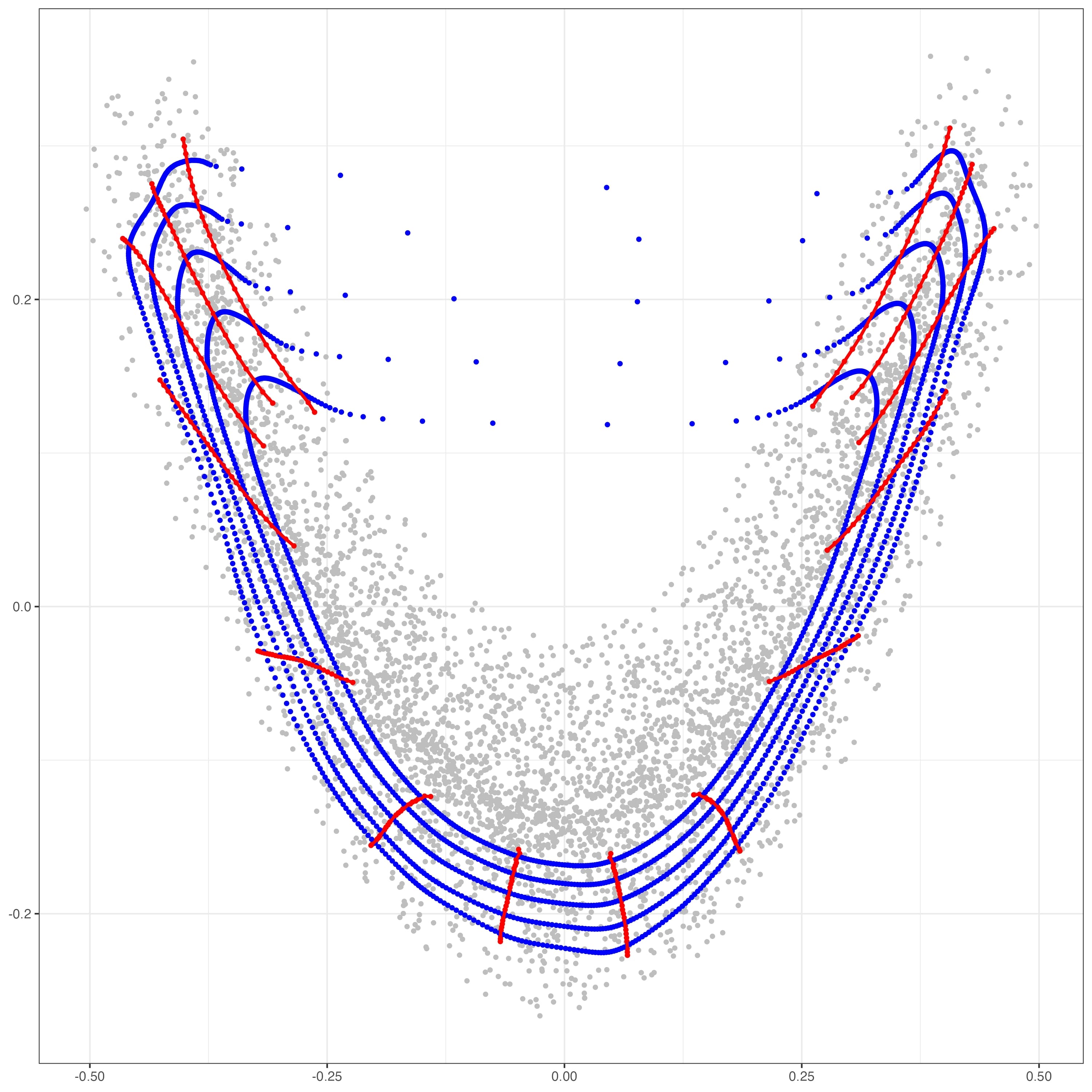}} 
\subfigure[Superquantiles, $\ee = 0.01$]{\includegraphics[width=1.5in,height=1.5in]{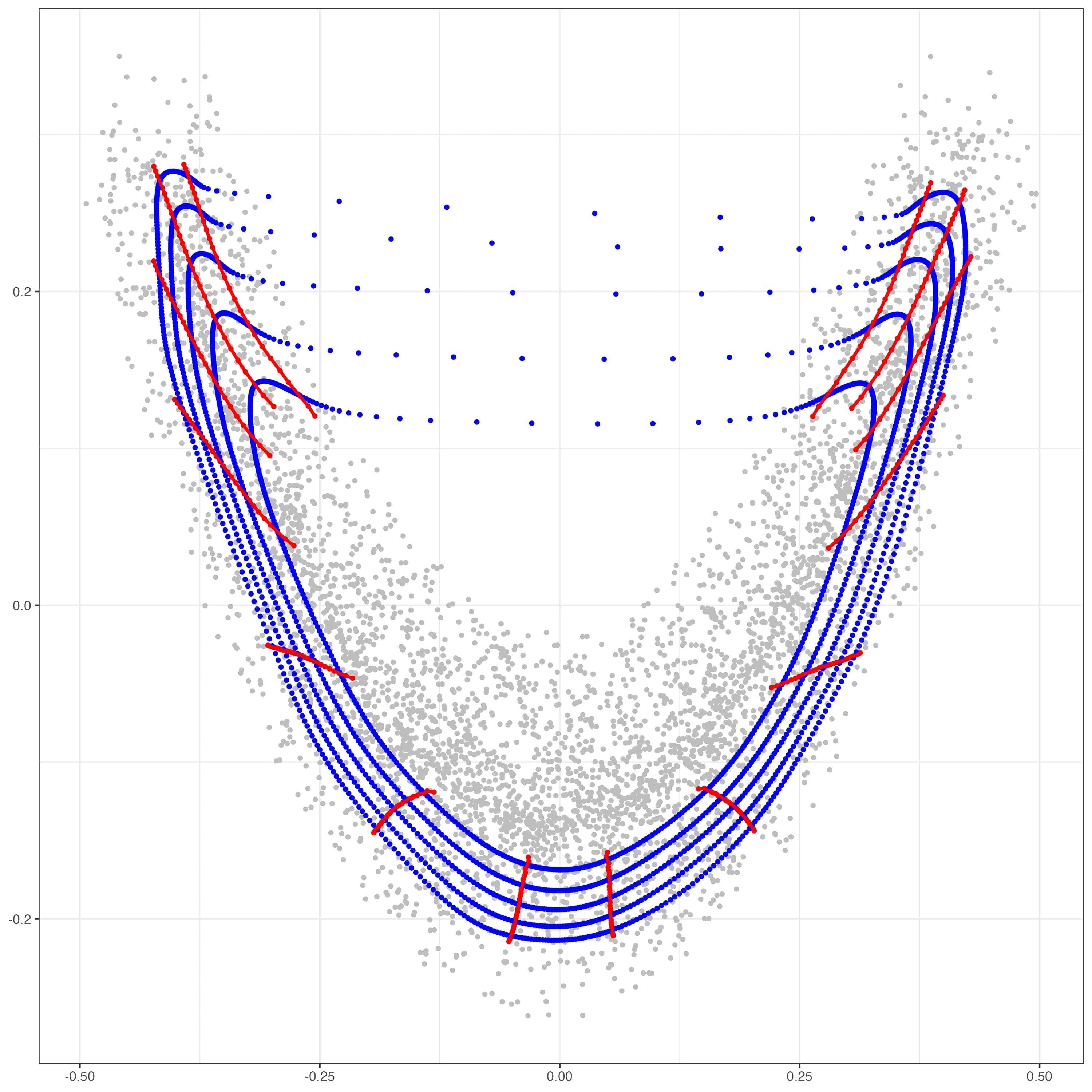}}

\caption{{  Center-outward} expected shortfall and superquantile contours of levels $\alpha$ in $\{0.1,0.3,0.5,0.7,0.9\}$ in blue and averaged sign curves $C_u$ in red.}
\label{fig:descriptiveplots}
\end{figure} 

\subsubsection{Risk measurements on toy examples}\label{subsec:emp_descriptif}

 \begin{figure}[htbp]
\centering

{\subfigure[Same {  Gaussians}, reduced covariance matrix. ]{\includegraphics[scale=0.22]{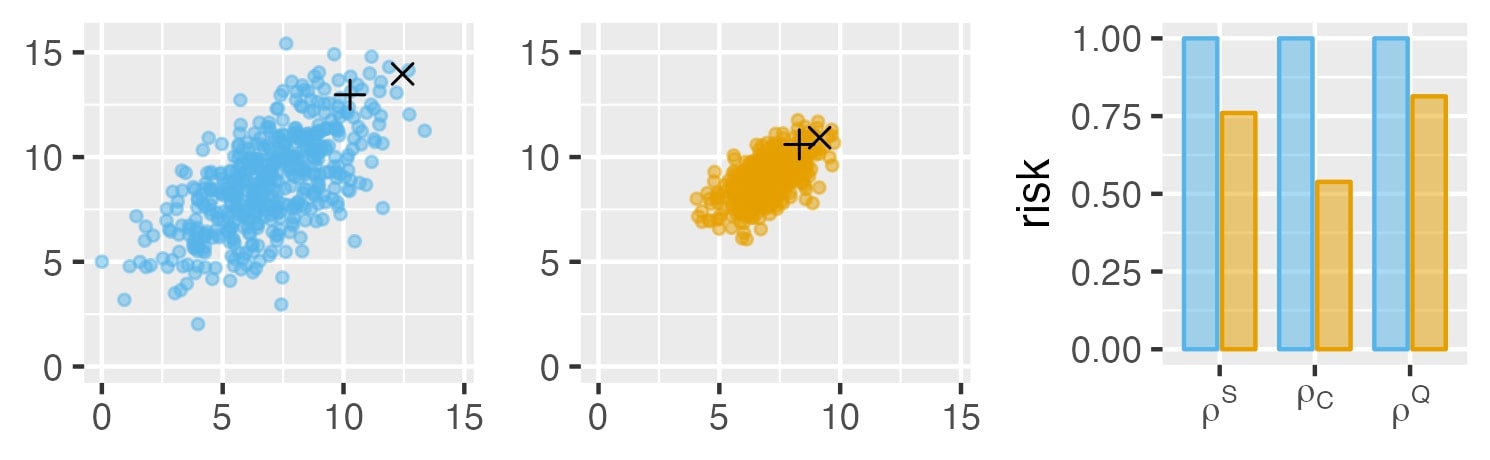}\label{fig:toy_ex_1}}}

{\subfigure[Same {  Gaussians}, added outliers.]{\includegraphics[scale=0.22]{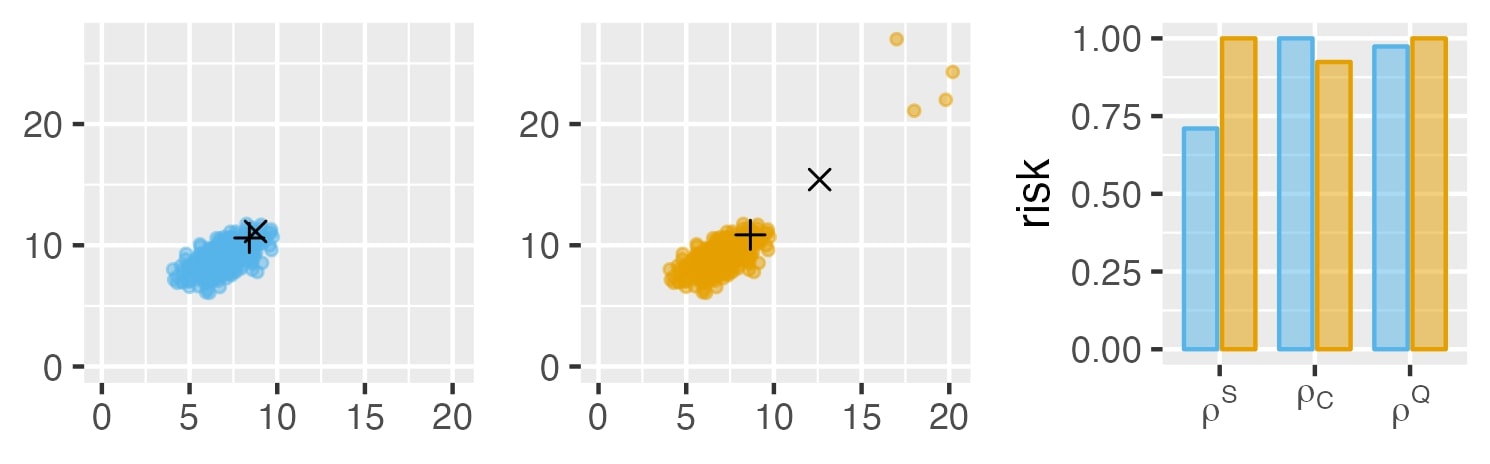}\label{fig:toy_ex_2}}}

{\subfigure[Same {  Gaussians} with shift.]{\includegraphics[scale=0.22]{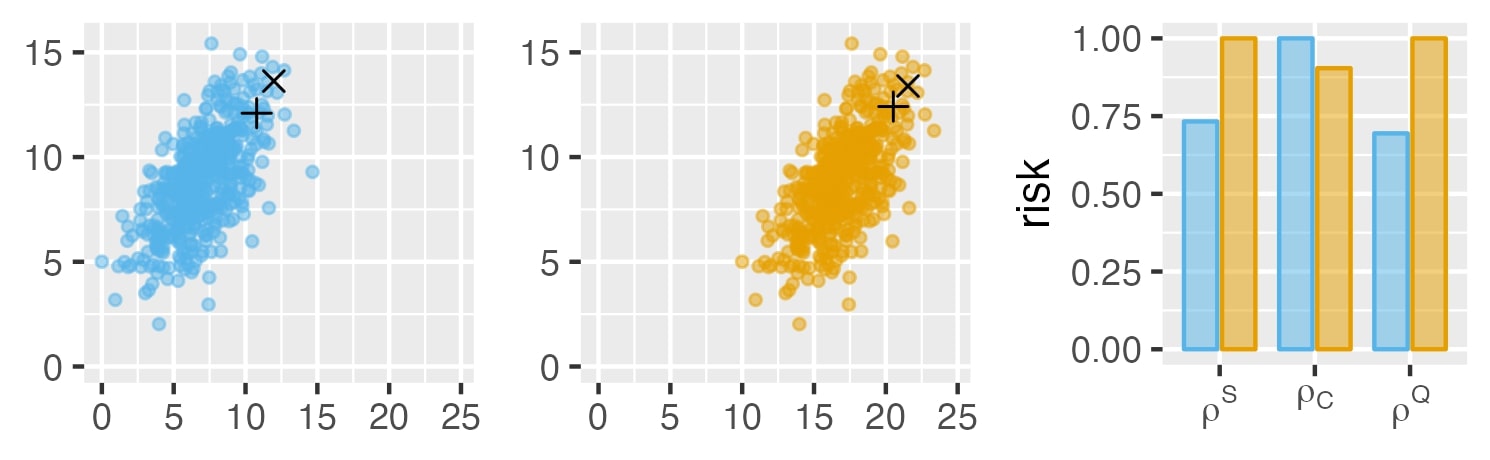}\label{fig:toy_ex_3}}}


{\subfigure[{  Gaussians pointed in the vertical or horizontal direction }]{\includegraphics[scale=0.22]{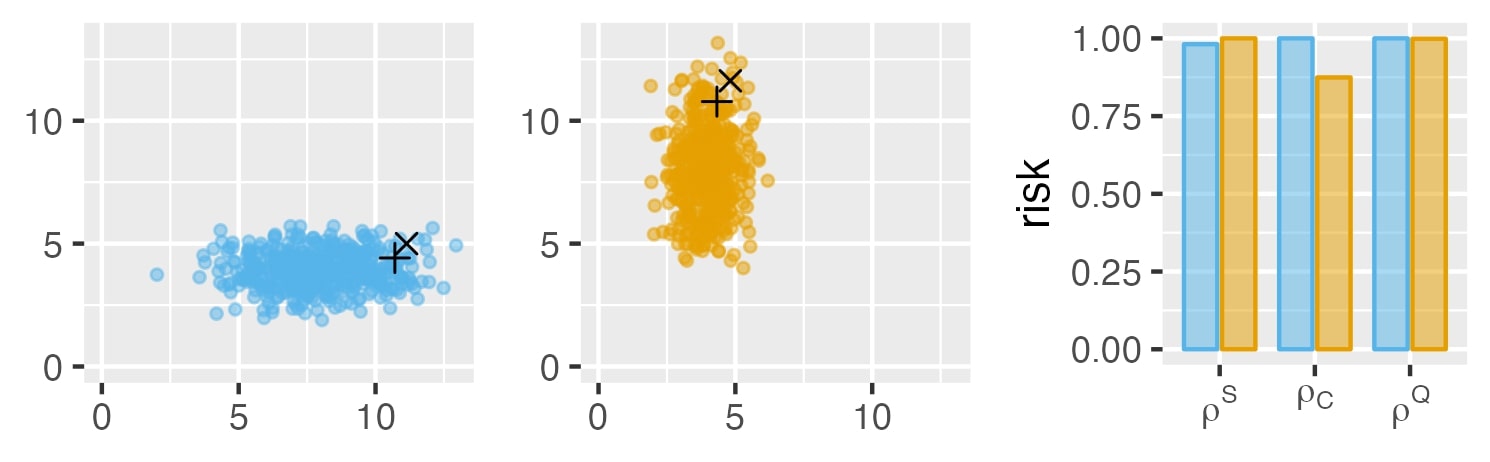}\label{fig:toy_ex_5}}}

\caption{VaRs ($+$) and CVaRs ($\times$) on toy examples. Third column : real-valued risk measurements for the point clouds of the same line.}
\label{fig:easy-visual-risks-var-cvar}
\end{figure}

In dimension $d=2$, an empirical distribution can be represented with a scatter plot and the riskiest observations are visible with the naked eye: they are located furthest from the origin.
From this principle, we selected easy-to-handle situations to evaluate our risk measures in Figure \ref{fig:easy-visual-risks-var-cvar}.
Each row refers to a situation where a blue scatter plot (first column) is to be compared with an orange one (second column). For each situation, our \textit{(Conditional) Vectors-at-Risk} of order $\alpha=0.75$ is illustrated on each scatter plot. 
Moreover, the associated $\rho_{\alpha}^Q$ and $\rho_{\alpha}^S$ are computed and compared to the maximal correlation risk mesure $\rho_C$ from \cite{beirlant2019centeroutward} in the third column.
The higher the bar, the riskier the corresponding vector of losses. 
In view of their comparison, the measurements $\rho \in \{\rho_{\alpha}^Q,\rho_{\alpha}^S, \rho_C\}$ are rescaled. For $Y_1$ the distribution of the blue scatter plot and $Y_2$ the orange one, one considers, for the height of the bars, 
\begin{equation*}
\rho(Y_1)/ \max\Big(\rho(Y_1),\rho(Y_2)\Big) \quad \text{and} \quad \rho(Y_2)/ \max\Big(\rho(Y_1),\rho(Y_2)\Big).
\end{equation*}
For each situation, our VaR and CVaR provide typical observations in the multivariate tails.
The \textit{maximal-correlation risk measure} $\rho_C$ performs as well as expected, while our CVaR succeeds in more situations. Indeed, $\rho_C$ benefits from several theoretical properties but only measures the riskiness of $X-\mathbb{E}(X)$, hence it neglects the shift effects.
 
Figure \ref{fig:toy_ex_1} contains two Gaussian distributions with identical mean vectors and covariance matrices related through the multiplication by a positive real. 
This is well tackled by each real-valued risk measurement. 
The existence of more outliers must be taken into account similarly, as in figure \ref{fig:toy_ex_2}, where the two scatter plots originate from the same underlying distribution, but some outliers are added to the orange one. The CVaR is much more sensitive to these outliers than the VaR, as in dimension $d=1$.  Note that $\rho_C$ ignores the outliers and leads to the wrong decision in the sense than the blue distribution is considered as the riskiest one.
In the situation of Figure \ref{fig:toy_ex_3}, the orange scatterplot is identical to the blue one, but shifted to the right side, so that it must be the riskiest. As $\rho_C$ ignores this shift, it induces the wrong decision.
Finally, in the last example of Figure \ref{fig:toy_ex_5},
making a decision on the relative risk between the underlying distributions requires a preference for one of the components. 
Here, the two situations reveal risks of same intensity but directed towards different directions.
To inform on the underlying directional information, vector-valued risk measures such as our VaRs and CVaRs are needed in complement. 

\subsection{Risk measurements on wind gusts data}\label{subsec:windgusts}

In this section, we illustrate our new multivariate risk measures on the analysis of a real dataset provided by the \href{https://cran.r-project.org/web/packages/ExtremalDep/ExtremalDep.pdf}{ExtremalDep} R package, \cite{ExtremalDep}, and dedicated to the study of strong wind gusts. 
This dataset has previously been studied in \cite{DiBernardino2018,Goegebeur2023,Marcon2017} in a context of risk measurement.
The three variables are hourly wind gust (WG) in meters per second, wind speed (WS) in meters per second, and air pressure at sea level (DP) in millibars, recorded at Parcay-Meslay (France) between July 2004 and July 2013. We consider the 1450 weekly maximum of each measurement. 
  
Because the variables are of different nature, it is the precise framework where multivariate risk analysis is useful, rather than the aggregation of several variables. 

\begin{figure}[h]
\centering
\includegraphics[height=2.8in]{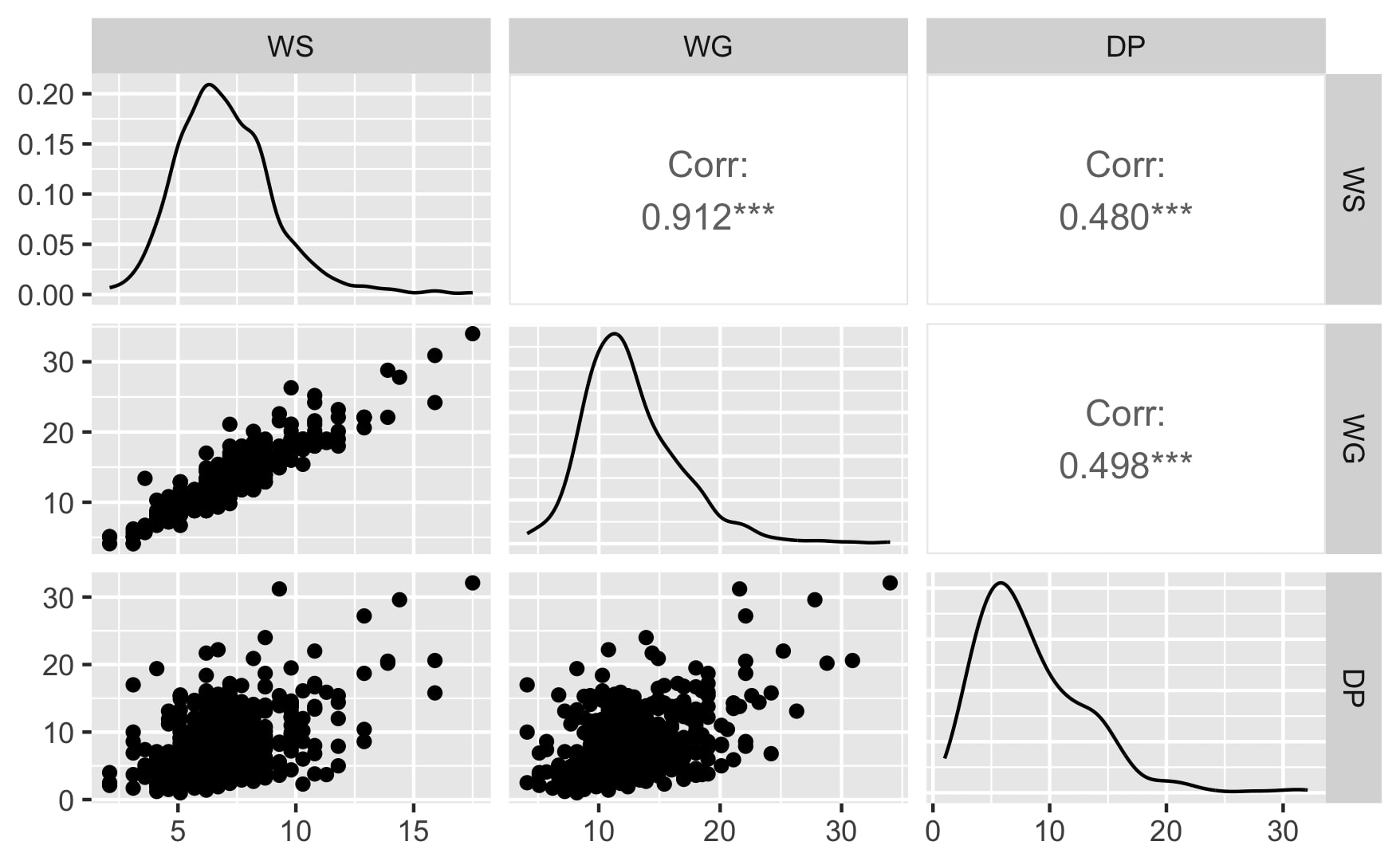}
\caption{Three dimensional wind gust data set.}
\label{fig:corplot}
\end{figure} 

Figure \ref{fig:corplot} represents our three-dimensional dataset with pair scatterplots under the diagonal and Pearson correlation values above. 
The diagonal represents empirical density functions of each variable. Upper-right dependence can be observed and has physical explanations. Strong wind gusts occur with stormy weather, during which strong wind speed and high air pressure are frequently recorded. 

\begin{figure}[h]
\centering
\subfigure[Risk level 0.25]{\includegraphics[height=1.5in]{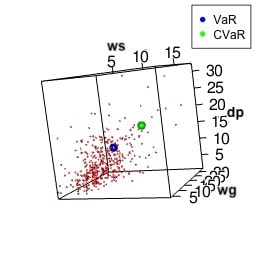}}
\subfigure[Risk level 0.5]{\includegraphics[height=1.5in]{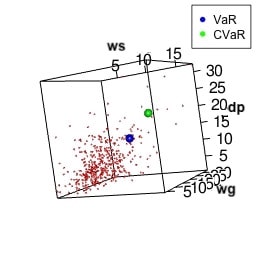}}
\subfigure[Risk level 0.75]{\includegraphics[height=1.5in]{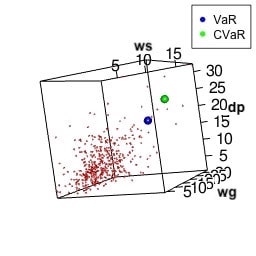}}
\caption{Vectors-at-Risk and Conditional-Vectors-at-Risk.}\label{fig:plot3d}
\end{figure}

Figure \ref{fig:plot3d} represents the three dimensional empirical distribution in red together with our vectorial risk measures.
With the increase of the dimension, such representative plots are no longer convenient.
Rather, these measurements can be retrieved as in Table \ref{fig:tables}. 
\vspace{2ex}

\begin{table}[h!]
\centering
\begin{tabular}{||c c c c||} 
 \hline
 \ & $VaR_{0.25}$ & $VaR_{0.5}$ & $VaR_{0.75}$ \\ [0.5ex] 
 \hline
 WS & 8.21 & 9.98 & 12.36 \\ 
 WG & 15.06 &  \cellcolor{lightgray} 18.45 & 21.58 \\
 DP & 11.92 & 13.65 & 17.84 \\ [1ex] 
 \hline
\end{tabular}
\begin{tabular}{||c c c c||} 
 \hline
\ & $CVaR_{0.25}$ & $CVaR_{0.5}$ & $CVaR_{0.75}$ \\ [0.5ex] 
 \hline
 WS & 11.40 & 12.37 & 14.26 \\ 
 WG & 21.28 & 22.43 & 26.24 \\
 DP & 16.68 & 19.86 & 23.15 \\ [1ex] 
 \hline
\end{tabular}
\caption{Components of (Conditional) Vectors-at-Risk for several risk levels.}
\label{fig:tables}
\end{table}

This summarizes the targeted information contained in the dataset. For instance, with a given probability 0.25, 0.5 or 0.75, one shall expect, at worst, wind gusts of respective speed 
15.06, 18.45, 21.58 m/s. With respectively same probability, averaged observations beyond these worst cases shall lie around 21.28, 22.43, 26.24 m/s. 
  
As in \cite{Marcon2017}, we interpret these results thanks to the Beaufort scale. Note that this scale does not capture the speed of wind gusts, as it usually averages over 10 minutes, by convention. 
Values between 13.9 and 17.1 m/s can be considered as high winds. Strong winds begin with 17.2 m/s, with severely strong winds above 20.7 m/s up to 24.4 m/s. Severely strong winds can cause slight structural damage, but less than storms for values around 24.5-28.4 m/s. Above and up to 32.6 m/s, violent storms are very rarely experienced and cause widespread damage. Wind speeds greater than 32.7 m/s correspond to hurricanes.
Thanks to Table \ref{fig:tables}, with probability 0.75, the worst scenarios in Parcay-Meslay for wind gusts are severely strong winds. Moreover, along the tail events corresponding to 25\% of occurrences, one shall expect wind gusts of same speed as storms.
Also, to illustrate the fact that considering univariate measures leads to underestimating the risk, we display in Figure \ref{fig:table1D} traditional univariate Values-at-Risk of WG, the wind gusts. For example, its median is 11.80, (strong breeze), and must be compared with 18.45, (strong winds), the second coordinate of $VaR_{0.5}$. 

\begin{table}[h!]
\centering
\begin{tabular}{||c c c c c||} 
 \hline
 Min & 1st quartile & Median & 3rd quartile & Max \\ [0.5ex] 
 \hline
 4.10 & 9.80 & \cellcolor{lightgray} 11.80 & 14.90 & 34 \\ [1ex] 
 \hline
\end{tabular}
\caption{Univariate quantiles of our variable WG.}
\label{fig:table1D}
\end{table}

Such differences have statistical foundings. 
The median depends on the \textit{univariate} empirical distribution of WG
to describe a probability of $1/2$. 
Conversely, our multivariate $VaR_{0.5}$ encodes the \textit{multivariate} joint probability of the whole point cloud (WG,WS,DP). 
This can be summarized by the fact that univariate risk measures \textit{neglect the correlations}, which legitimates the use of multivariate risk analysis.

\section{Conclusion and perspectives} \label{sec:conclu}

In this paper, we provided new concepts of superquantiles and expected shortfalls, in a meaningful multivariate way, based on the Monge-Kantorovich quantiles. 
We expect that our definitions allow the extension of the univariate applications to the multivariate case. 
Also, we introduced new transport-based risk measures, extending the concepts of Value-at-Risk and Conditional-Value-at-Risk to the dimension $d>1$. 
The definitions of this paper are mainly motivated by practical concerns and the coherence of the proposed concepts with respect to their interpretation in dimension $d=1$. 
Interestingly, from our center-outward expected shortfall function, one can retrieve Lorenz functions defined in \cite{Hallin_mordant_2022}. 
This is discussed in Section \ref{integratedQuantiles}, and other perspectives are also presented in Section \ref{perspectives}.

\subsection{On the class of integrated quantile functions}\label{integratedQuantiles}

The role of integrated quantile functions in dimension $d=1$ has been highlighted in \cite{Guschin_2017}.
Obviously, we belong to the line of works trying to extend such functions to the setting $d>1$.
In Section 4 of \cite{Hallin_mordant_2022}, two different approaches are discussed, in view of generalizing 
\begin{equation}\label{ell1d}
\alpha \mapsto \int_0^\alpha Q(t) dt.
\end{equation}
Hereafter, we bridge their concepts with ours, namely Definition \ref{expectedshortfall} that extends \eqref{ell1d}.
Recall that $\mathbb{S}^{d-1} = \{\varphi \in \RR^d : \Vert \varphi \Vert_2 = 1 \}$ and $\P_S$ is the uniform probability measure on $\mathbb{S}^{d-1} $.
There, the (absolute) center-outward Lorenz function from \cite{Hallin_mordant_2022}[Definition 4] writes as
\begin{equation}\label{LorenzAndExpected}
L_{\textbf{X}\pm} : \alpha \mapsto \mathbb{E}[X \mathds{1}_{ X \in \mathds{C}_\alpha} ]= \int_{\mathbb{S}^{d-1}} \alpha E(\alpha \varphi) d\P_{S}(\varphi).
\end{equation}
This being said, we believe that our proposed concepts of integration along sign curves contain more information, namely directional. When characterizing 
the contributions of central regions to the expectation, $L_{\textbf{X}\pm}$ provides meaningful concepts of Lorenz curves, but this approach is insufficient for superquantiles and multivariate tails, as illustrated in Example \ref{example1}. 

Another important generalization of \eqref{ell1d} is the one from \cite{Fan2022} about multivariate Lorenz curves.
A main difference between the concepts of \cite{Fan2022} and \cite{Hallin_mordant_2022} is the reference distribution, either the uniform on the unit hypercube or the spherical uniform. 
Here, Lorenz curves aim to visualize inequalities within a given population. In this context, one could focus either on the contribution of middle classes as in \cite{Hallin_mordant_2022}, or on the one of the bottom of the population as in \cite{Fan2022}, in a way that these works are in fact complementary.
Furthermore, another approach is proposed in \cite{Hallin_mordant_2022}[Definition 6], that is real-valued, namely the (absolute) center-outward Lorenz potential function, for $\varphi \in \mathbb{S}^{d-1}$,
\begin{equation}\label{LorenzPotentialAndExpected}
\alpha \mapsto \EE_\varphi[ \psi(\alpha \varphi)].
\end{equation}
This is quite natural because, at the core of the univariate considerations of \cite{Guschin_2017}, one can retrieve the property that the quantile function is the gradient of a convex potential, that is the definition of the center-outward quantile function $Q$.
Also, it has a physical interpretation, with a measurement of the \textit{work} of the quantile function. There, one might note that our center-outward expected shortfall function draws a connection between \cite{Hallin_mordant_2022}[Definition 4] and \cite{Hallin_mordant_2022}[Definition 6], through \eqref{LorenzAndExpected} and \eqref{eq:psi_is_E}. Somehow, this strengthens the relation between $L_{\textbf{X}\pm}$ and the potential function $\psi$.  
Studying deeper such connections between existing notions might lead to interesting results, but is left for further work.

\subsection{Other perspectives}\label{perspectives}

Perspectives are both theoretical and practical. 
Statistical tests associated with our real-valued measures would be of great interest, to automatically detect when to use the vector-valued ones. 
Also, these risk measures appeal for being used in practical settings. 
 
Extending our definitions to extreme quantile levels and building related procedures is another line of study.
For instance, \cite{de2018stability}[Theorem 4.3] ensures the existence of a MK quantile map which is stable as moving further to the tail contours, which is not the case with the spherical uniform $U_d$. 
Our proposed center-outward superquantiles can be defined with other spherical reference measures, thus one can also calibrate these superquantiles as the work of \cite{de2018stability} suggests. 

In addition, the many applications of the univariate superquantile function in other fields than risk measurement, such as superquantile regression or optimization with a superquantile loss, appeal for further work, even more so with Theorems \ref{expshort_metrizes_weakcvgce} and \ref{cvgceSk}. 
Finally, the convergence of empirical entropic optimal transport towards its population counterpart is an active field, \cite{Barrio2022,gonzalez2022,Mena2019,pooladian2021entropic}, and it remains to study to what extent existing results adapt to the convergence of $\widehat{S}_{\ee,n}$ and $\widehat{E}_{\ee,n}$.

\bibliographystyle{abbrv} 
\bibliography{superquantiles.bib}


\end{document}